\documentclass[10pt]{amsart}

\usepackage{graphicx}
\usepackage{comment}
\usepackage{amsfonts,amsmath,latexsym,amssymb,amsthm}
\usepackage{setspace}

\textheight=22cm
\textwidth=13.5cm
\hoffset=-1cm
\parindent=16pt

\newcounter{theoremcounter}
\newcounter{lemmacounter}

\newcounter{dummycounter}
\newcounter{propcounter}
\newcounter{corcounter}

\newcounter{emptycounter}
\newcounter{defcounter}

\newcounter{examcounter}

\newtheorem{theorem}[theoremcounter]{Theorem}

\newtheorem{lemma}[lemmacounter]{Lemma}

\newtheorem{proposition}[propcounter]{Proposition}
\newtheorem{corollary}[corcounter]{Corollary}

\newtheorem{definition}[defcounter]{Definition}

\newtheorem{example}[examcounter]{Example}

\numberwithin{equation}{section}
\numberwithin{lemmacounter}{section}
\numberwithin{propcounter}{section}
\numberwithin{corcounter}{section}
\numberwithin{conjcounter}{section}
\numberwithin{theoremcounter}{section}
\numberwithin{probcounter}{section}

\newcounter{eqncounter}

\numberwithin{equation}{eqncounter}

%
%

\def\IR{\mathbf R}
\def\IC{\mathbf C}
\def\IZ{\mathbf Z}
\def\IN{\mathbf N}

\def\T{T}

\def\valpha{{\mbox{\boldmath $\alpha$}}}

\def\vNull{{\mbox{\boldmath $0$}}}

\def\L{\mathcal{L}}

\def\A{{\mathfrak{A}}}
\def\C{{\mathfrak{C}}}
\def\c{{\mathfrak{c}}}

\def\E{\IR^N}

\def\F{{\tilde{F}}}

\def\m{m}
\let\rho\varrho

\def\Ti{{T_{{\bf i}}}}

\def\vbeta{{\mbox{\boldmath $\beta$}}}

\def\Q{{\mbox{\boldmath $Q$}}}
\def\vm{{\mbox{\boldmath $m$}}}

\def\Vol{\textup{Vol}}

\def\Lam1{\Lambda_1}

\def\C0{C_{\bf{0}}}

\def\Da{D}
\def\SIi0{S_{F}(\i)}

\def\SI0i{S_{F}({\bf{0}})}

\def\p0{\psi\Ti(\Lambda)}

\def\diam{\text{diam}}

\def\c1{c_1}

\def\ka{\kappa}

\def\v{v}

\def\r{r}

\def\c{c}

\def\cone{c_2}
\def\ctwo{c_3}
\def\cthree{c_1}
\def\cfour{c_5}
\def\cfive{c_7}
\def\copnorm{c_{6}}

\def\ccountlatticepts{c_{4}}

\def\con{\gamma}

\def\divfct(|P|+R){(|P|+R)^\varepsilon}

\def\Fd{F(d)}

\def\Fo{F(1)}

\def\App{\varphi}

\def\Rt{\Ab}

\def\i{{{\bf i}}}
\def\v0{{{\bf 0}}}
\def\vv0{\underline{{\bf 0}}}

\def\vx{{\bf x}}
\def\vy{{\bf y}}
\def\vz{{\bf z}}
\def\vp{{\bf p}}
\def\vvx{\underline{\bf x}}
\def\vvy{\underline{\bf y}}
\def\vvz{\underline{\bf z}}

\def\vp{{\bf p}}
\def\vq{{\bf q}}

\def\Nm{\textup{Nm}_{\vbeta}}
\def\Nmm{\textup{Nm}}

\def\t{{t}}
\def\s{{s}}
\def\N{{N}}

\def\me{m_1}
\def\mn{m_n}
\def\mi{m_i}

\def\S{\mathcal{S}}

\def\C{C}

\def\ro{\rho}
\def\la{\lambda}
\def\La{\Lambda}

\def\Qi{Q_i}
\def\Qo{Q_1}
\def\Qn{Q_n}
\def\Qm{Q_{max}}
\def\Qmin{Q_{min}}

\def\R{R}
\def\Rz{\overline{Q}}

\def\Z{Z_{\Q}}
\def\Zj{Z_{\Q_j}}

\def\Cx{B_{\vvx_j}}

\def\vyi{\vy_i}
\def\Byi{B_{\vyi}}

\def\mul{\mu(\La,\Qm)}

\def\gcd{\text{gcd}}
\def\Ab{G}
\def\Ac{B}
\def\Ad{H}
\def\dS{n^{1/2}}

\def\thi{\theta_i}
\def\tho{\theta_1}
\def\thn{\theta_n}
\def\thm{\theta_{min}}

\def\k{k}

\def\a{a_M}
\def\cet{c}

\def\ka{\kappa}
\def\F{\mathcal{F}}
\def\Aut{\text{GL}}

\def\LEZ{\mathcal{E}_{\La}(\Z)}
\def\LEZi{\mathcal{E}_{\La}(\Zj)}
\def\x{\mathfrak{Q}}

\def\Mat{\text{Mat}}

\begin{document}

\title{Weak admissibility, primitivity, o-minimality, and Diophantine approximation}

\author{Martin Widmer}

\address{Department of Mathematics\\ 
Royal Holloway, University of London\\ 
TW20 0EX Egham\\ 
UK}

\email{martin.widmer@rhul.ac.uk}

\date{\today}

\subjclass[2010]{Primary 11H06, 11P21, 11J20 11K60; Secondary 03C64, 22F30}

\keywords{Weakly admissible, lattice points, Diophantine approximation, counting, coprime, primitive, o-minimality, flows, homogeneous spaces}

\maketitle

\tableofcontents

\begin{abstract}
We generalise M. M. Skriganov's notion of weak admissibility for lattices to include standard lattices 
occurring in Diophantine approximation and algebraic number theory,
and we prove estimates for the number of lattice points in sets such as aligned boxes. Our result improves
on Skriganov's celebrated counting result if the box is sufficiently distorted, the lattice is not admissible, and, e.g., symplectic or orthogonal.
We establish a criterion under which our error term is sharp, and we provide
examples in dimensions $2$ and $3$ using continued fractions.
We also establish a similar counting result for primitive lattice points, and apply the latter
to the classical problem of Diophantine approximation with primitive points as studied
by Chalk, Erd\H{o}s, and others. Finally, we use o-minimality to describe large classes of
sets 
to which our counting results apply.
\end{abstract}

\section{Introduction}\label{intro}
In this article we generalise Skriganov's notion of (weak) admissibility for lattices to include standard lattices 
occurring in Diophantine approximation and algebraic number theory (e.g., ideal lattices),
and we prove a sharp estimate for the number of lattice points in sets such as aligned boxes.
Our result applies when the lattice is weakly admissible, whereas Skriganov's result requires the
dual lattice to be weakly admissible (in his stronger sense). If the lattice is symplectic or orthogonal\footnote{The lattice $\La=A\IZ^N$ is symplectic (or orthogonal) if $A\in \Aut_N(\IR)$ is symplectic (or orthogonal).} and weakly admissible 
then both results apply, and our error term is better, provided the lattice is not admissible and the
box is sufficiently distorted.
Our error term also has a good dependence on the geometry of the lattice which allows us to apply a M\"obius inversion 
to get a similar estimate for primitive lattice points.
The motivation for this comes from a classical Diophantine approximation result \cite{ChalkErdos1959} due to Chalk and Erd\H{os} from 1959 for numbers;
it appears that our result is the first one in higher dimensions.
We also make modest progress on a conjecture of Dani, Laurent, and Nogueira \cite{DaniLaurentNogueira2014,LaurentNogueira2012} on an inhomogeneous Khintchine Groshev type result for primitive points.
Finally, we use o-minimality, a notion from model theory, to describe large classes of sets to which
our counting results apply. The usage of o-minimality to asymptotically count lattice points has been initiated by Barroero and the author \cite{BarroeroWidmer}, and
\cite[Theorem 1.3]{BarroeroWidmer} has already found various applications (see, e.g., \cite{Barroero1,Barroero2,Frei1,Frei2,Frei3,Frei4}). Here we  further develop this idea but we use o-minimality in a different way.\\

Next  we shall state the simplest special case of Theorem \ref{Thm1}, and compare it to Skriganov's result \cite[Theorem 6.1]{Skriganov} (more precisely, Technau and the authors generalisation \cite[Theorem 1]{TechnauWidmer} to inhomogeneously expanding boxes). 
Let $\Gamma\subset \IR^N$. Following Skriganov we define $\nu(\Gamma,\rho):=\inf\{|x_1\cdots x_N|^{1/N}; \vvx\in \Gamma\backslash\{\vv0\}, |\vvx|<\rho\}$, and 
we say a lattice $\La$ in $\IR^N$ is weakly admissible if $\nu(\La,\rho)>0$ for all $\rho>0$ and admissible if $\lim_{\rho\rightarrow \infty}\nu(\La,\rho)>0$.
Let $\Z$ be a translate of the box $[-Q_1,Q_1]\times\cdots \times[-Q_N,Q_N]$, and write $\Qm$ for the maximal $Q_i$, and $\Rz$ for their geometric mean.
We set $\LEZ:=\left|\#(\La\cap \Z)-\Vol\Z/\det\La\right|$.
\begin{theorem}\label{Thm0}
Suppose $\La$ is a weakly admissible lattice in $\IR^N$. Then we have
\begin{alignat*}3
\LEZ
\ll_N \inf_{0<\Ac\leq \Qm}\left(\frac{\Rz}{\nu(\La,\Ac)}+\frac{\Qm}{\Ac}\right)^{N-1}.
\end{alignat*}
\end{theorem}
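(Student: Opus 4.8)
The plan is to remove the distortion of the box by an axis-rescaling, absorbing it into the lattice, and then to bound the counting error for a unit cube by the first successive minimum of the rescaled lattice via a Lipschitz-parametrisation counting estimate; weak admissibility of $\La$, through the arithmetic--geometric mean inequality, will supply the needed lower bound on that first minimum.

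First I would set $D=\mathrm{diag}(Q_1^{-1},\dots,Q_N^{-1})$, $\Lambda_D:=D\La$, and $Z_D:=D\Z$, so that $Z_D$ is a translate of $[-1,1]^N$. Since $D$ is a linear bijection with $\det D=(\prod_i Q_i)^{-1}$, it restricts to a bijection $\La\cap\Z\to\Lambda_D\cap Z_D$ and satisfies $\Vol Z_D/\det\Lambda_D=\Vol\Z/\det\La$; hence $\LEZ=\bigl|\#(\Lambda_D\cap Z_D)-\Vol Z_D/\det\Lambda_D\bigr|$. Now I would invoke a counting estimate of the Lipschitz-parametrisation type used by Barroero and the author \cite{BarroeroWidmer}: for any lattice $\Gamma\subset\IR^N$ and any translate $S$ of $[-1,1]^N$,
\[
\bigl|\#(\Gamma\cap S)-\Vol S/\det\Gamma\bigr|\ \ll_N\ \sum_{j=0}^{N-1}\bigl(\lambda_1(\Gamma)\cdots\lambda_j(\Gamma)\bigr)^{-1},
\]
where the number of Lipschitz patches needed to cover $\partial([-1,1]^N)$ and their Lipschitz constants depend only on $N$. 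As the successive minima are non-decreasing, $\lambda_1(\Gamma)\cdots\lambda_j(\Gamma)\ge\lambda_1(\Gamma)^j$, so the right-hand side is $\ll_N\max\{1,\lambda_1(\Gamma)^{-(N-1)}\}$; thus it suffices to bound $\lambda_1(\Lambda_D)$ from below.

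The crux is the claim $\lambda_1(\Lambda_D)\ge W^{-1}$, where $W:=\inf_{0<\Ac\le\Qm}\bigl(\Rz/\nu(\La,\Ac)+\Qm/\Ac\bigr)$. Fix $\vvx\in\La\setminus\{\vv0\}$ and $\Ac$ with $0<\Ac\le\Qm$. If $|\vvx|<\Ac$, then weak admissibility gives $|x_1\cdots x_N|\ge\nu(\La,\Ac)^N$, and the arithmetic--geometric mean inequality yields
\[
|D\vvx|=\max_{1\le i\le N}|x_i|/Q_i\ \ge\ \Bigl(\textstyle\prod_{i}|x_i|/Q_i\Bigr)^{1/N}=|x_1\cdots x_N|^{1/N}/\Rz\ \ge\ \nu(\La,\Ac)/\Rz .
\]
If instead $|\vvx|\ge\Ac$, then choosing $i_0$ with $|x_{i_0}|=|\vvx|$ gives $|D\vvx|\ge|x_{i_0}|/Q_{i_0}\ge\Ac/\Qm$. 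In either case $|D\vvx|\ge\min\{\nu(\La,\Ac)/\Rz,\ \Ac/\Qm\}\ge(\Rz/\nu(\La,\Ac)+\Qm/\Ac)^{-1}$; taking the infimum over $\vvx\ne\vv0$ and then the supremum over $\Ac$ proves the claim.

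Finally, for $0<\Ac\le\Qm$ one has $\Qm/\Ac\ge1$, hence $W\ge1$, so $\max\{1,\lambda_1(\Lambda_D)^{-(N-1)}\}\le W^{N-1}$; combining this with the two displayed bounds gives $\LEZ\ll_N W^{N-1}$, which (since $t\mapsto t^{N-1}$ is increasing on $[0,\infty)$) is exactly the asserted estimate. The proof of Theorem \ref{Thm0} itself presents no real obstacle once one has the rescaling idea and the Lipschitz-type counting input; the substance lies entirely in the $\lambda_1$ lower bound, where the arithmetic--geometric mean step is what turns $\Qm$ into the geometric mean $\Rz$ and hence yields the improvement over Skriganov's estimate for sufficiently distorted boxes (for the full Theorem \ref{Thm1} I would expect the analogous setup of $\nu$ and the rescaling for the more general, possibly non-full-rank, lattices there, together with the sharper control of the higher minima needed for the sharpness criterion, to be the more delicate points).
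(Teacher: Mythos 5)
Your proposal is correct and follows essentially the same route as the paper: rescale the box by a diagonal map (the paper uses $\theta_i=\Rz/Q_i$ rather than $Q_i^{-1}$), apply a Davenport/Lipschitz-type counting estimate governed by $\lambda_1$ of the rescaled lattice, and bound $\lambda_1$ from below via the case split $|\vvx|<\Ac$ (using $\nu(\La,\Ac)$ and the arithmetic--geometric mean inequality) versus $|\vvx|\ge\Ac$, finally taking the infimum over $\Ac$, exactly as in the paper's Lemma \ref{minimumestimate}. The only blemish is your claim that some coordinate satisfies $|x_{i_0}|=|\vvx|$, which holds for the maximum norm but not the Euclidean norm; using $\max_i|x_i|\ge |\vvx|/\sqrt{N}$ costs only a factor absorbed in $\ll_N$.
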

Suppose $\La$ is unimodular. Skriganov \cite[Theorem 6.1]{Skriganov} proved error estimates for  homogeneously expanding
aligned boxes (and more generally certain polyhedrons), provided the dual lattice $\La^\perp$ (with respect to the standard inner product)
is weakly admissible (see also \cite[(1.11) Theorem 1.1]{Skriganov1994} for a precursor of this result for admissible lattices). As shown in \cite[Theorem 1]{TechnauWidmer} his method also leads to results for inhomogeneously expanding
aligned boxes (provided $\La^\perp$ is weakly admissible) of the form\footnote{In the above setting our definition of $\nu(\cdot,\cdot)$ is the $N$-th root of  Skriganov's and the one in \cite{TechnauWidmer}.}
\begin{alignat}3\label{Skriganovbound}
\LEZ
\ll_N \frac{1}{\nu(\La^\perp, (\Rz/\Qmin)^*)^N}\inf_{\rho> \gamma_N^{1/2}} \left(\frac{\Rz^{N-1}}{\sqrt{\rho}}+\frac{r^{N-1}}{\nu(\La^\perp, 2^r\Rz/\Qmin)^N}\right),
\end{alignat}
where $\gamma_N$ denotes the Hermite constant, $r=N^2+N\log(\rho/\nu(\La^\perp,\rho\Rz/\Qmin))$,  and $(\Rz/\Qmin)^*=\max\{\Rz/\Qmin,\gamma_N\}$.
If $\La$ is admissible (which implies that $\La^\perp$ is admissible) then Skriganov's bound becomes $\ll_\La (\log \Rz)^{N-1}$ which conjecturally is sharp.

Let us know suppose that $\La$ is weakly admissible but not admissible. Technau and the author \cite[Theorem 2]{TechnauWidmer} have shown that in general, even if $\La$ and $\La^\perp$ are both weakly admissible, there is no way to bound 
$\nu(\La,\cdot)$ in terms of $\nu(\La^\perp,\cdot)$. This indicates the complementary aspect of Theorem \ref{Thm0} and  (\ref{Skriganovbound}). However, if 
$\La=A\IZ^N$ with, e.g., a symplectic or orthogonal matrix $A$ then $\nu(\La,\cdot)=\nu(\La^\perp,\cdot)$ by \cite[Proposition 1]{TechnauWidmer},
and we can directly compare our result with Skriganov's; note also that for $N=2$ every unimodular lattice is symplectic (cf.  \cite[Remark after Proposition 1]{TechnauWidmer}). 
Using that $\Rz/\Qmin\geq (\Qm/\Rz)^{1/(N-1)}=:\x$ and that $r\geq -N\log(\nu(\La^\perp,\x)$ we find the following crude lower bound\footnote{We are only interested in ``sufficiently distorted'' boxes, and so we can assume $\x>\gamma_N^{1/2}$.} for  the right hand-side of (\ref{Skriganovbound})
\begin{alignat}3\label{Skriganovbound1}
\left(\nu(\La, \x)\nu(\La, (\nu(\La,\x)^{-N\log 2}\x)\right)^{-N}.
\end{alignat}
Choosing $B=\Qm/\Rz=\x^{N-1}$ we see that the error term in Theorem \ref{Thm0} is bounded from above by
\begin{alignat}3\label{Thm1bound}
\ll_N\Rz^{N-1}\nu(\La, \x^{N-1})^{-(N-1)}.
\end{alignat}
In particular, if $N=2$ then our error term is better whenever $\nu(\La,\Qm/\Rz)^{-3}$ is larger than a certain multiple of $(\Vol\Z)^{1/2}$,
so if the box is sufficiently distorted in terms of $\nu(\La, \cdot)$ and the volume of the box
(note that for $\nu(\La, \Qm/\Rz)^{-1}=o(\Rz)$ as $\Rz$ tends to infinity, we still get asymptotics).
Also for arbitrary $N$ our error term is better when the box is sufficiently distorted in terms of $\nu(\La, \cdot)$ and the volume of the box,
and provided $\nu(\La,\rho)$ decays faster than $\rho^{-1/\log 2}$ or sufficiently slowly, e.g., like a negative power of $\log \rho$. The latter happens for almost every unimodular lattice
(cf. \cite[Lemma 4.5]{Skriganov}), and with $\La=A\IZ^N$ also for almost every\footnote{In the sense of the Haar measure on $SO_N(\IR)$.}  matrix $A\in SO_N(\IR)$ 
(cf. \cite[Lemma 4.3]{Skriganov}), and, as mentioned before, for these $\La$ we also have $\nu(\La,\cdot)=\nu(\La^\perp,\cdot)$.\\

Another significant difference between our and Skriganov's error term concerns the 
dependence on the lattice.  If we replace  $\Z$ by $k^{-1}\Z$ (or equivalently replace $\La$ by $k\La$ and fix $\Z$) then the lower bound (\ref{Skriganovbound1}) of the error term 
in (\ref{Skriganovbound}) remains the same.  
On the other hand  the upper bound (\ref{Thm1bound})
of  the error term in Theorem \ref{Thm0} decreases by a factor $k^{-N+1}$. This improvement allows 
us to sieve for coprimality, and thus to prove asymptotics for the number of primitive lattice points. \\

\section{Generalisation of weak admissibility and statement of the results}
\subsection{Generalised weak admissibility}
Let $\S=(\vm,\vbeta)$, where  $\vm=(\me,\ldots,\mn)\in \IN^n$, $\vbeta=(\beta_1,\ldots,\beta_n)\in (0,\infty)^n$,
and  $n\in \IN=\{1,2,3,\ldots\}$. We write 
$\vx_i$ for the elements in $\IR^{\mi}$ and $\vvx=(\vx_1,\ldots,\vx_n)$
for the elements in $\IR^{\me}\times\cdots\times\IR^{\mn}=\IR^N$,
where 
\begin{alignat*}3
\N&:=\sum_{i=1}^{n}\mi.
\end{alignat*}
We will always assume that $N>1$.
We set
\begin{alignat*}3
\t&:=\sum_{i=1}^{n}\beta_i.
\end{alignat*}
We use $|\cdot|$ to denote the Euclidean norm, and we write 
\begin{alignat*}3
\Nm(\vvx):=\prod_{i=1}^{n}|\vx_i|^{\beta_i}
\end{alignat*}
for the multiplicative $\vbeta$-norm on $\E$ induced by $\S$.  
Let $\C\subset \E$ be a coordinate-tuple subspace, i.e.,
\begin{alignat*}3
\C=\{\vvx\in \E; \vx_i=\vNull\; (\text{for all }i\in I)\},
\end{alignat*}
where $I\subset \{1,\ldots,n\}$. We fix such a pair $(\S,\C)$, and for $\Gamma\subset \E$ and $\ro>0$ we define the quantities
\begin{alignat*}3
\nu(\Gamma,\ro)&:=\inf\{\Nm(\vvx)^{1/\t}; \vvx\in \Gamma\backslash \C, |\vvx|<\ro\},\\
\Nm(\Gamma)&:=\lim_{\rho\rightarrow \infty}\nu(\Gamma,\ro).
\end{alignat*}
As usual we always interpret $\inf\emptyset =\infty$ and $\infty>x$ for all $x\in \IR$. The above quantities in the special case when $\C=\{\vv0\}$
and  $\mi=\beta_i=1$ $(\text{for all }1\leq i\leq n)$ were introduced by Skriganov
in \cite{Skriganov1994, Skriganov}.
By a lattice in $\IR^N$ we always mean a lattice of rank $N$.
\begin{definition}\label{defadm}
Let $\La$ be a lattice in $\IR^N$.
We say $\La$ is {\em weakly admissible} for $(\S,\C)$ if
$\nu(\La,\ro)>0$ for all $\ro>0$.
We say $\La$ is {\em admissible} for $(\S,\C)$ if $\Nm(\La)>0$.
\end{definition}
Note that weak admissibility for a lattice in $\IR^N$ depends only on the choice of $\C$ and $\vm$ whereas admissibility depends on $\C$ and $\S=(\vm,\vbeta)$.
Also notice that a lattice $\La$ in $\IR^N $ is weakly admissible (or admissible) in the sense of Skriganov \cite{Skriganov} if and only if 
$\La$ is  weakly admissible (or admissible) for $(\S,\C)$ with $\C=\{\vv0\}$ and $\mi=\beta_i=1$ $(\text{for all }1\leq i\leq n)$. 
Let us give some examples to illustrate that our notion of weak admissibility captures new interesting cases
not covered by Skriganov's notion of weak admissibility.

\begin{example}
Let $\Theta\in \Mat_{r\times s}(\IR)$ be a matrix with $r$ rows and $s$ columns and consider\footnote{Despite the row  notation we treat the
vectors as column vectors.}
\begin{alignat}3\label{Dioappexample}
\La=\begin{bmatrix} I_r & \Theta \\ \vNull & I_s \end{bmatrix} \IZ^{r+s}=\{(\vp+\Theta\vq,\vq); (\vp,\vq)\in \IZ^r\times\IZ^s\}.
\end{alignat}
We take $n=2$, $\me=r$, $m_2=s$ and $\C=\{(\vx_1,\vx_2); \vx_2=\vNull\}.$ 
Then the lattice $\La$ is weakly admissible for $(\S,\C)$ (for every choice of $\vbeta$)
if $\vp+\Theta\vq\neq \vNull$ for every $\vq\neq \vNull$. If $\vbeta=(1,\beta)$ then $\La$ is admissible for $(\S,\C)$ if 
we have
\begin{alignat}3\label{Dioapp}
|\vp+\Theta\vq|{|\vq|}^{\beta}\geq c_\La
\end{alignat}
for every $(\vp,\vq)$ with $\vq\neq \vNull$ and some fixed $c_\La>0$. 
The above lattice $\La$ naturally arises when considering Diophantine approximations for the matrix $\Theta$ (cf. Corollary \ref{Cor2}).
Recall that the matrix $\Theta$ is called badly approximable
if (\ref{Dioapp}) holds true with $\beta=s/r$. 
W.~M.~Schmidt \cite{Schmidt1969} has
shown that the Hausdorff dimension of the set of badly approximable matrices is full, i.e., $rs$.
\end{example}
Another example comes from the Minkowski-embedding of, e.g., an ideal in a number field.
\begin{example}
Suppose $K$ is a number field  with $r$ real and 
$s$ pairs of complex conjugate embeddings. Let $\sigma:K\rightarrow \IR^r\times\IC^s$ be the Minkowski-embedding, and identify
$\IC$ in the usual way with $\IR^2$. Set n=r+s, $\C=\{\vv0\}$, $\mi=\beta_i=1$ for $1\leq i\leq r$, and   $\mi=\beta_i=2$ for $r+1\leq i\leq r+s$.
Now let $\A\subset K$ be a free $\IZ$-module of rank $N=r+2s$. Then $\La=\sigma\A$ is admissible in $(\S,\C)$.
In particular, this generalises the examples of Skriganov for totally real number fields to arbitrary number fields $K$.
Unlike in Skriganov's setting we can also consider cartesian products of such modules $\A_j$ by using the embedding $\sigma:K^p\rightarrow \IR^{pr}\times\IC^{ps}$
that sends a tuple $\valpha$ to $(\sigma_1(\valpha),\ldots,\sigma_{r+s}(\valpha))$. Now $\mi$ is $p$ if $\sigma_i$ is real and $2p$ otherwise 
while $n$ and $\beta_i$ remain unchanged. Again we get that $\La=\sigma(\A_1\times\cdots\times \A_p)$ is an admissible lattice in $(\S,\C)$.
\end{example}

\subsection{Generalised aligned boxes}
Now we introduce the sets in which we count the lattice points.
Essentially these are the sets that are distorted only in the directions of the coordinate axes. Let $(\S,\C)$ be given, and recall that $\C=\C_I$.

For $\Q=(Q_1,\ldots,Q_n)\in (0,\infty)^n$ we consider the $\vbeta$-weighted geometric mean
\begin{alignat*}3
\Rz=\left(\prod_{i=1}^{n}\Qi^{\beta_i}\right)^{1/\t},
\end{alignat*}
and we assume throughout this note that
\begin{alignat}3\label{Qorder}
\Qi\leq \Rz \;(\text{for all } i\notin I).
\end{alignat}
We set
\begin{alignat*}3
\Qm:=\max_{1\leq i\leq n}\Qi,\\
\Qmin:=\min_{1\leq i\leq n}\Qi.
\end{alignat*}

For $\ka>0$ and $M\in \IN$ we introduce the family of sets
\begin{alignat*}3
\F_{\ka,M}:=\{S\subset \IR^N; \partial(AS)\in \text{ Lip}(N,M,\ka\cdot \diam(AS))\; \forall A\in \Aut_N(\IR)\}.
\end{alignat*}

Here $\Aut_N(\IR)$ denotes the group of invertible $N\times N$-matrices with real entries, $\diam(\cdot)$ denotes the diameter, $\partial(\cdot)$ denotes the topological boundary, 
and the notation Lip-$(\cdot,\cdot,\cdot)$ is explained in Definition \ref{Lip} Section \ref{countingprinciples}.

It is an immediate consequence of \cite[Theorem 2.6]{WidmerLNCLP} that every bounded convex set in $\IR^N$ lies
in $\F_{\ka,M}$ for $\ka=16N^{5/2}$ and $M=1$. We will also show (Proposition \ref{Propomin}) that if $Z\subset \IR^{d+N}$ is definable in an 
o-minimal structure and each fiber $Z_T=\{\vvx; (T,\vvx)\in Z\}\subset \IR^N$
is bounded then each fiber $Z_T$ lies in $\F_{\ka_Z,M_Z}$ for certain constants $\ka_Z$ and $M_Z$ depending
only on $Z$ but not on $T$. This result provides another rich source of interesting examples, and might be of independent interest.

For $1\leq i\leq n$ let $\pi_i:\E\rightarrow \IR^{\mi}$ be the projection defined by $\pi_i(\vvx)=\vx_i$. 
We fix values $\ka$ and $M$, and we assume
throughout this article that $\Z\subset \IR^N$ is such that for all $1\leq i\leq n$ 
\begin{alignat*}3
(1)\;&\Z\in \F_{\ka,M},\\
(2)\;&\pi_i(\Z)\subset \Byi(\Qi) \text{ for some }\vyi \in \IR^{\mi}.
\end{alignat*}
Here $B_{\vyi}(\Qi)$ denotes the closed Euclidean ball  in $\IR^{\mi}$ about $\vy_i$ of radius $\Qi$.
As is well known (see, e.g., \cite{Spain}) $\partial(\Z)\in \text{ Lip}(N,M,L)$ implies that $\Z$ is measurable.

\subsection{Main results}
Let $(\S,\C)$ be given. For $\Gamma\subset \E$ we introduce the quantities
\begin{alignat*}3
\la_1(\Gamma):=\inf\{|\vvx|; \vvx\in \Gamma\backslash\{\vv0\}\},
\end{alignat*}
and
\begin{alignat*}3
\mu(\Gamma,\ro):=\min\{\la_1(\Gamma\cap \C), \nu(\Gamma,\ro)\}.
\end{alignat*}
If $\mu(\Gamma,\rho)=\infty$ then we interpret $1/\mu(\Gamma,\rho)$ as $0$. 
Finally, we introduce the error term
\begin{alignat*}3
\LEZ:=\left|\#(\Z\cap\La)-\frac{\Vol \Z}{\det \La}\right|.
\end{alignat*}
Our first result is a sharp upper bound for $\LEZ$.
\begin{theorem}\label{Thm1}
Suppose $\La$ is a weakly admissible lattice for $(\S,\C)$, and define $\cthree:=M((1+\ka)N^{2N})^N$. Then we have
\begin{alignat*}3
\LEZ
\leq \cthree \inf_{0<\Ac\leq \Qm}\left(\frac{\Rz}{\mu(\La,\Ac)}+\frac{\Qm}{\Ac}\right)^{N-1}.
\end{alignat*}
\end{theorem}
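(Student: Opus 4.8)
The plan is to reduce the lattice-point count in $\Z$ to a sum of counts in a family of boxes obtained by a dyadic decomposition of the annular regions $\{|\vx_i| \sim 2^{j_i}\}$, then to handle each dyadic box by a linear change of variables that rectifies the lattice and invoke the Lipschitz-parametrisation counting principle (Definition \ref{Lip}, and the convex/Lipschitz input from \cite[Theorem 2.6]{WidmerLNCLP}) to bound the discrepancy in that box by its boundary complexity. The key point is that weak admissibility, via the quantity $\nu(\La,\rho)$, forces the lattice to be ``spread out'' in every coordinate block of small height, so that on each dyadic piece the relevant sublattice has a well-controlled successive minimum and the error in each piece is $O(1)$ in a way that sums geometrically.

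First I would fix $\Ac$ with $0<\Ac\le\Qm$ and set $L:=\diam(A\Z)$ for the ambient matrix $A$; the set membership $\Z\in\F_{\ka,M}$ guarantees $\partial(A\Z)\in\mathrm{Lip}(N,M,\ka L)$ for every $A\in\Aut_N(\IR)$, which is exactly what is needed to apply the counting principle after any linear substitution. Next I would split the index set into the ``large'' coordinates $i\notin I$ with $\Qi$ comparable to $\Rz$ and the coordinates where $|\vx_i|$ is small relative to $\Ac$: writing $\Z\cap\La$ as a disjoint union over tuples $\vh=(h_i)$ recording the dyadic size $2^{h_i}$ of $|\vx_i|$ for $\vvx\in\La$ lying in $\Z$, I use condition (2), namely $\pi_i(\Z)\subset\Byi(\Qi)$, to see that only $\ll_N (\log(\Qm/\Ac))^{?}$ — in fact, after optimising, only boundedly many relevant — scales occur for each $i$, while on the block where $|\vx_i|<\Ac$ for all $i\in I^c$ the vector $\vvx$ either lies in $\C$ (contributing through $\la_1(\La\cap\C)$) or has $\Nm(\vvx)^{1/\t}\ge\nu(\La,\Ac)$, forcing $|\vx_i|$ to be bounded below and hence forcing $\vvx$ into a narrow slab whose lattice points are counted by a lower-dimensional lattice of covolume $\gg \mu(\La,\Ac)^{N-1}$ relative to the slab's volume. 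Summing the $O(1)$ discrepancies from the main region and the $\ll (\Rz/\mu(\La,\Ac))^{N-1}$ contribution from the rectified main box, plus the $\ll(\Qm/\Ac)^{N-1}$ contribution from the tail slabs, gives the stated bound for each $\Ac$, and taking the infimum over $\Ac$ finishes the proof.

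The main obstacle I anticipate is the bookkeeping in the dyadic decomposition: one must show that passing to the change of variables that makes the relevant sublattice ``unit-covolume in the box'' does not blow up the Lipschitz constant beyond the universal factor $(1+\ka)N^{2N}$ hidden in $\cthree$, and that the geometric means $\Rz$ and the product structure of $\Nm$ interact correctly so that the volume-to-covolume ratio in each slab is genuinely $(\Rz/\mu(\La,\Ac))^{N-1}$ and not merely $(\Rz/\mu(\La,\Ac))^{N}$. Getting the exponent $N-1$ rather than $N$ is the crux — it comes from the fact that the top-dimensional term (the full volume) is cancelled by $\Vol\Z/\det\La$, so the discrepancy lives on the $(N-1)$-dimensional boundary, and this is precisely where the Lipschitz-parametrisation bound of \cite[Theorem 2.6]{WidmerLNCLP} together with the hypothesis $\Z\in\F_{\ka,M}$ must be used, uniformly over all the linear substitutions arising in the decomposition. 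The constant $M((1+\ka)N^{2N})^N$ is then read off by tracking how the parametrisation constant $M$ and the diameter factor transform under composition of the at most $N$ substitutions needed.
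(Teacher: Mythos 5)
There is a genuine gap: the mechanism by which you extract the two terms of the bound does not work as stated. First, the assertion that on the region where $|\vvx|$ is small, weak admissibility ($\Nm(\vvx)^{1/\t}\geq\nu(\La,\Ac)$) ``forces $|\vx_i|$ to be bounded below'' is false: the condition bounds only the \emph{product} $\prod_i|\vx_i|^{\beta_i}$ from below, and individual blocks $\vx_i$ can be arbitrarily small while the product stays large. Consequently the reduction to ``narrow slabs'' counted by ``a lower-dimensional lattice of covolume $\gg\mu(\La,\Ac)^{N-1}$'' has no justification — weak admissibility gives no direct covolume lower bound for sublattices in slabs, and this is exactly the step where the exponent $N-1$ and the quantity $\mu$ must enter. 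Second, the dyadic decomposition over scales $|\vx_i|\sim 2^{j_i}$ produces on the order of $\log(\Qm/\Ac)$ scales per coordinate, and your claim that ``only boundedly many relevant scales occur'' is not substantiated; summing per-piece discrepancies over all these scales would introduce logarithmic factors that the stated bound (which has none) does not tolerate. Third, the origin of the term $\Qm/\Ac$ is misattributed: it does not come from counting in tail slabs but (in any working argument) from the fact that a lattice vector with $|\vvx|\geq\Ac$ cannot become too short under the relevant rescaling.

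For comparison, the paper's proof avoids any decomposition. One applies a \emph{single} anisotropic rescaling $\phi(\vvx)=(\theta_1\vx_1,\ldots,\theta_n\vx_n)$ with $\theta_i=\Rz/Q_i$, which maps $\Z$ into a ball of radius $n^{1/2}\Rz$ and, crucially, leaves $\Nm$ invariant because $\prod_i\theta_i^{\beta_i}=1$. The weighted AM--GM inequality gives $|\vvz|\geq\Nm(\vvz)^{1/\t}$ for all $\vvz$, and a three-case analysis ($\vvx\in\La\cap\C$; $\vvx\notin\C$ with $|\vvx|<\Ac$; $\vvx\notin\C$ with $|\vvx|\geq\Ac$) yields $\la_1(\phi\La)\geq\min\{\la_1(\La\cap\C),\,\nu(\La,\Ac),\,\Ac\Rz/\Qm\}$, using $\theta_i\geq 1$ for $i\notin I$ by (\ref{Qorder}). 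Then one single application of the Lipschitz counting principle (Lemma \ref{MV_CL}, whose statement is already in terms of $\la_1$ alone) to $\phi\Z$ and $\phi\La$, together with $\Z\in\F_{\ka,M}$ to control $\partial(\phi\Z)$, gives $\LEZ\ll (\Rz/\la_1(\phi\La))^{N-1}+1\leq(\Rz/\mu(\La,\Ac)+\Qm/\Ac)^{N-1}$ up to the explicit constant, and taking the infimum over $\Ac$ finishes. If you want to salvage your outline, the invariance of $\Nm$ under the rescaling and the trichotomy above are the ingredients you would need to make the ``small $|\vvx|$'' region work without any slab or dyadic bookkeeping.
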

Considering suitable homogeneously expanding parallelepipeds it is clear that the error term cannot be improved in this generality. 
However, the situation becomes much more interesting when we restrict the sets $\Z$ to aligned boxes. 
In this case Skriganov conjectured \cite[Remark 1.1]{Skriganov1994} that his error term \cite[(1.11) Theorem 1.1]{Skriganov1994} for admissible lattices  (in his sense) is sharp.
Skriganov's conjecture would follow from the expected sharp lower bound for the extremal discrepancy of sequences in the unit cube in $\IR^N$ (see \cite[Remark 2.2]{Skriganov1994}); however, this is a major open
problem in uniform distribution theory, proved only for $N=2$ by Schmidt \cite{SchmidtVII}. Therefore, the sharpness of Skriganov's error term for admissible lattices is known only
for $N=2$.
Here we are able to show that for weakly admissible lattices (in our sense) the error term in Theorem \ref{Thm1} is sharp for $N=2$ and $N=3$.
\begin{theorem}\label{sharp}
Suppose $2\leq n\leq 3$, $\m_i=\beta_i=1$ ($1\leq i\leq n$) (hence $N=n$) and $\C=\{\vvx;\vx_n=\vNull\}$. Then there exists an absolute constant $c_{abs}>0$, a unimodular, weakly admissible lattice $\La$ for $(\S,\C)$,
and a sequence of increasingly  distorted (i.e., $\Rz/\Qm$ tends to zero), aligned boxes $\Z=[-Q_1,Q_1]\times\cdots\times[-Q_n,Q_n]$, satisfying (\ref{Qorder}), whose volume $(2\Rz)^N$ tends to infinity, 
such that  for each box $\Z$
$$\LEZ \geq c_{abs} \inf_{0<\Ac\leq \Qm}\left(\frac{\Rz}{\mu(\La,\Ac)}+\frac{\Qm}{\Ac}\right)^{N-1}.$$
\end{theorem}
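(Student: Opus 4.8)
The plan is to build everything from the continued fraction expansion of a single carefully chosen irrational $\alpha$ (here $\|\cdot\|$ denotes distance to the nearest integer). For $N=2$, where $\C=\{\vvx:x_2=0\}$, I take the unimodular lattice $\La=\left(\begin{smallmatrix}1&\alpha\\0&1\end{smallmatrix}\right)\IZ^2=\{(p+\alpha q,q):p,q\in\IZ\}$; for irrational $\alpha$ this is weakly admissible for $(\S,\C)$, and it is admissible if and only if $\alpha$ is badly approximable. So I choose $\alpha=[0;a_1,a_2,\dots]$ with $a_j=1$ for all $j$ except along a very sparse sequence $k_1<k_2<\cdots$ where $a_{k_i+1}=A_i$, the $A_i\to\infty$ growing fast enough that $A_i\ge A_{i-1}^{4}$; write $p_j/q_j$ for the convergents. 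For each $i$ I take the aligned box $\Z$ with $Q_1:=A_i^{3/4}\,\|q_{k_i}\alpha\|$ and $Q_2:=A_i^{1/2}q_{k_i}$; then $\Rz=\sqrt{Q_1Q_2}\asymp A_i^{1/8}\to\infty$ (so its volume $(2\Rz)^{N}\to\infty$), $\Rz/\Qm=\sqrt{Q_1/Q_2}\to0$ (so the boxes become increasingly distorted), and (\ref{Qorder}) holds.

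The error is then computed essentially exactly. Since $Q_1<\tfrac12$, each vertical fibre of $\Z$ meets $\La$ in at most one point, so $\#(\La\cap\Z)=\#\{q\in\IZ:|q|\le Q_2,\ \|q\alpha\|\le Q_1\}$. Using the best-approximation property of the convergents — for $0<|s|<q_{k_i}$ one has $\|s\alpha\|\ge\|q_{k_i-1}\alpha\|\asymp q_{k_i}^{-1}$, which dominates both $Q_1$ and the ``drift'' $|j|\,\|q_{k_i}\alpha\|$ — one shows the only solutions are the multiples $q=jq_{k_i}$ with $|j|\le\lfloor A_i^{1/2}\rfloor$, whence $\#(\La\cap\Z)=2\lfloor A_i^{1/2}\rfloor+1$; on the other hand $\Vol\Z=4Q_1Q_2=4A_i^{5/4}q_{k_i}\|q_{k_i}\alpha\|\le 4A_i^{1/4}$, because $q_{k_i}\|q_{k_i}\alpha\|<A_i^{-1}$. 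Hence $\LEZ\gg A_i^{1/2}$. For the upper bound on the infimum in Theorem~\ref{Thm1}, a standard continued-fraction computation gives $\nu(\La,B)^{2}\asymp\bigl(\max\{a_{j+1}:q_j\le B\}\bigr)^{-1}$, so, since $\lambda_1(\La\cap\C)=1$, one finds $\mu(\La,B)\gg A_i^{-1/8}$ for $0<B<q_{k_i}$ (using $A_{i-1}\le A_i^{1/4}$) and $\mu(\La,B)\asymp A_i^{-1/2}$ for $q_{k_i}\le B\le\Qm=Q_2$; testing the bracket at $B$ just below $q_{k_i}$ gives $\inf_{0<B\le\Qm}\bigl(\Rz/\mu(\La,B)+\Qm/B\bigr)\ll\Rz+Q_2/q_{k_i}\ll A_i^{1/2}$. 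As $N-1=1$, combining the two estimates yields $\LEZ\ge c_{abs}\inf_{0<B\le\Qm}(\cdots)^{N-1}$ with an absolute $c_{abs}$, for the single lattice $\La$ and the boxes just constructed.

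For $N=3$ the strategy is the same, but the obvious candidate $\La_2\times\IZ$ is \emph{not} weakly admissible for $\C=\{x_3=0\}$ (it contains $(0,0,1)$, on which $\Nm$ vanishes), so one needs a genuinely three-dimensional lattice of simultaneous-approximation type, e.g. $\La=\left(\begin{smallmatrix}1&0&\alpha\\0&1&\beta\\0&0&1\end{smallmatrix}\right)\IZ^3$ with $\beta$ a fixed rational multiple of $\alpha$, so that both $|p+\alpha r|$ and $|q+\beta r|$ remain governed by the one-dimensional continued fraction of $\alpha$; for irrational $\alpha$ this is unimodular and weakly admissible, and not admissible once $\alpha$ has unbounded partial quotients. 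Condition (\ref{Qorder}) now forces (for $I=\{3\}$) $Q_3\ge(Q_1Q_2)^{1/2}$, hence $\Qm=Q_3$ and $\Rz\le Q_3$, so the boxes must be thin in $x_1$ (and $x_2$) and strongly stretched in $x_3$; one again reads off $\nu(\La,\cdot)$ and the lattice-point count from the continued fraction of $\alpha$ and tunes the box parameters so that the mismatch between $\#(\La\cap\Z)$ and $\Vol\Z$ matches $\inf_{0<B\le\Qm}(\cdots)^{2}$.

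The step I expect to be the main obstacle is precisely this matching when $N=3$: the infimum now enters with exponent $N-1=2$, so one must arrange that the ``cluster'' phenomenon that drives the error in the thin direction interacts with the ordinary count in the remaining direction strongly enough to produce a square, which is considerably more delicate than the one-dimensional bookkeeping that suffices for $N=2$. A secondary, but still nontrivial, point is to pin the count down with fully explicit absolute constants — in particular, ruling out all unexpected near-solutions of $\|q\alpha\|\le Q_1$ inside the box — while simultaneously keeping the volume-growth and distortion requirements satisfied by one fixed lattice.
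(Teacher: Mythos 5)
Your $N=2$ construction is essentially sound and is a legitimate variant of what the paper does: the paper also works with the lattice of a single continued fraction (with $a_{j+1}=[\log q_j]+1$) and boxes built on a convergent, but it routes the verification through a general criterion (Proposition \ref{lowerboundcrit}, resting on the grid argument of Lemma \ref{lemmasixone} and the hypotheses (\ref{succminbound}), (\ref{growcond})), whereas you choose sparse, rapidly growing partial quotients and count the lattice points in the box directly. For $n=2$ your direct bookkeeping (only multiples of the convergent survive, volume $\ll A_i^{1/4}$, infimum tested just below $q_{k_i}$) does give $\LEZ\gg A_i^{1/2}\gg \inf_B(\Rz/\mu+\Qm/B)$, so that half of the theorem is in order.

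The genuine gap is the case $n=3$, which you yourself flag as the ``main obstacle'': what you offer there is a plan, not a proof, and the plan misses the two ideas the paper uses to make the exponent $N-1=2$ come out. First, the paper does \emph{not} tune a one-dimensional count; it produces the lower bound $\LEZi\gg N_j^{2}$ from an $(n-1)$-dimensional grid: the boxes are $\Zj=N_j B_{\vvx_j}$, and because the consecutive convergent vectors $\vvx_j,\vvx_{j+1}$ are linearly independent and short relative to the box, the second box-minimum $\la_2(\La,B_{\vvx_j})$ stays bounded (condition (\ref{succminbound})); the lattice points $m_1v_1+m_2v_2$ then give $\gg N_j^2$ points in $\Zj$ while $\Vol\Zj\ll N_j^2$ with a smaller constant after choosing $a$ (Lemma \ref{lemmasixone}). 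Without some such mechanism, a fibre-by-fibre count in the thin directions will not ``square'' the discrepancy. Second, the paper's choice in Proposition \ref{lowerboundexample} is $\La=\{(p_1-q\alpha,p_2-q\alpha,q)\}$ with $\alpha$ \emph{badly approximable}: this lattice is automatically weakly admissible but never admissible (since $\Nmm(\vvx_j)\asymp q_j^{-1}\to0$, already with bounded partial quotients — your remark that non-admissibility requires unbounded partial quotients is off the mark here), and bounded quotients are exactly what give the two-sided bound $\nu(\rho)\asymp\rho^{-1/3}$ at every scale, which is what lets one check that with $B=N_j$ the infimum is $\ll N_j$ and hence its square matches the error. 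Your proposed lattice (with $\beta$ a rational multiple of $\alpha$) is of the same type, but until the bounded-box-minimum grid argument and the choice of $\alpha$ making $\nu$ a genuine power are supplied and the comparison with $\inf_{0<B\le\Qm}(\Rz/\mu(\La,B)+\Qm/B)^2$ is carried out, the $n=3$ half of the theorem remains unproved.
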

Thanks to the good  dependence on the lattice of the  error term in Theorem \ref{Thm1} we are also able to prove asymptotics for the number of primitive lattice points. 

Let $\La$ be a lattice in $\IR^N$.  We say $\vvx\in \La$ is primitive if $\vvx$ is not of the form $k\vvy$ for some $\vvy\in \La$ and some integer $k>1$.
We write
\begin{alignat*}3
\La^*:=\{\vvx\in \La; \vvx\text{ is primitive}\}.
\end{alignat*}
To state our next result let $\T:[0,\infty)\rightarrow [1,\infty)$ be monotonic increasing, and an upper bound for the divisor function, i.e.,
$$\T(k)\geq \sum_{d\mid k}1$$ for all $k\in \IN$.  
Finally, $\zeta(\cdot)$ denotes the Riemann zeta function.
\begin{theorem}\label{Thm2}
Suppose $\La$ is a weakly admissible lattice for $(\S,\C)$. Then there exists a constant $\cone=\cone(N,\ka,M)$, depending only on $N,\ka,M$, such that
\begin{alignat*}3
\left|\#(\Z\cap\La^*)-\frac{\Vol \Z}{\zeta(N)\det \La}\right|
\leq \cone \left(\left(\frac{\Rz}{\mu}+1\right)^{N-1}+\left(\frac{\Rz}{\mu}+1\right)\T\left(\Ad \right)\right),
\end{alignat*}
where 
\begin{alignat*}1
\Ad
=N^{2N+2}(\Rz+|\phi(\vvy)|)\left(\frac{1}{\mu}+\frac{1}{\Rz}\right),
\end{alignat*}
$\mu=\mul$,
and
$|\phi(\vvy)|$ is the Euclidean norm of $(\Rz\vy_1/\Qo,\ldots,\Rz\vy_n/\Qn)\in \E$.
\end{theorem}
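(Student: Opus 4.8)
The plan is to derive Theorem~\ref{Thm2} from Theorem~\ref{Thm1} by a standard M\"obius inversion over the dilation index, exploiting the good dependence of the error term in Theorem~\ref{Thm1} on the lattice. Write $\chi_\Z$ for the indicator of $\Z$. The starting point is the identity $\#(\Z\cap\La^*)=\sum_{k\geq 1}\mu(k)\,\#(\Z\cap k\La)$, where $\mu(\cdot)$ is the M\"obius function; this is valid because every nonzero $\vvx\in\La$ is uniquely $k\vvy$ with $\vvy\in\La^*$ primitive, so $\#(\Z\cap\La)=\sum_k\#(\Z\cap k\La^*)$ and M\"obius inversion gives the displayed formula (the sum is finite since $\Z$ is bounded, so only finitely many $k$ contribute). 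Subtracting the expected main term and using $\sum_k\mu(k)/k^N=1/\zeta(N)$ together with $\det(k\La)=k^N\det\La$, one gets
\begin{alignat*}3
\#(\Z\cap\La^*)-\frac{\Vol\Z}{\zeta(N)\det\La}=\sum_{k\geq1}\mu(k)\left(\#(\Z\cap k\La)-\frac{\Vol\Z}{k^N\det\La}\right)-\frac{\Vol\Z}{\det\La}\sum_{k>K}\frac{\mu(k)}{k^N},
\end{alignat*}
for a suitable truncation parameter $K$, where the second sum on the right accounts for truncating the series $\sum\mu(k)/k^N$ at $K$ after the first sum on the right has been truncated at the same $K$ (beyond $K$ every term $\#(\Z\cap k\La)$ vanishes provided $K$ is chosen so that $k\La\cap\Z=\{\vv0\}$ for $k>K$, which forces the main-term correction).

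Next I would bound each summand $\mathcal{E}_{k\La}(\Z):=|\#(\Z\cap k\La)-\Vol\Z/(k^N\det\La)|$ using Theorem~\ref{Thm1} applied to the lattice $k\La$. One needs $k\La$ to be weakly admissible for $(\S,\C)$, which is immediate since $\nu(k\La,\rho)=k\,\nu(\La,\rho/k)$ is positive whenever $\nu(\La,\cdot)$ is; similarly $\la_1(k\La\cap\C)=k\,\la_1(\La\cap\C)$, so $\mu(k\La,\rho)\geq k\,\mu(\La,\rho)$ for the relevant ranges of $\rho$. Substituting into the bound of Theorem~\ref{Thm1} and choosing the free parameter $\Ac$ appropriately (e.g.\ $\Ac=\Qm$, which is legitimate), the error for $k\La$ is $\ll_{N,\ka,M}\left(\tfrac{\Rz}{k\mu}+1\right)^{N-1}$ with $\mu=\mul$. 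Summing $|\mu(k)|\leq 1$ over $1\leq k\leq K$ and expanding $\left(\tfrac{\Rz}{k\mu}+1\right)^{N-1}$ binomially, the dominant contributions are $\sum_{k\leq K}\left(\tfrac{\Rz}{k\mu}\right)^{N-1}\ll\left(\tfrac{\Rz}{\mu}\right)^{N-1}$ (since $N\geq2$ makes $\sum k^{-(N-1)}$ convergent) together with the constant term $\sum_{k\leq K}1=K$, which is where the factor $\T(\Ad)$ will enter; the intermediate binomial terms are controlled by the same two extremes. The truncation tail $\tfrac{\Vol\Z}{\det\La}\sum_{k>K}\mu(k)/k^N\ll \Rz^N/(K^{N-1}\det\La)$ must be reconciled with the main term $\Vol\Z/(\zeta(N)\det\La)$, but in fact the cleaner route is to run the argument so that the infinite M\"obius sum for the main term is kept intact and only the lattice-point sum is truncated at $K$; then the ``error'' from mismatched tails is exactly $\tfrac{\Vol\Z}{\det\La}\sum_{k>K}\mu(k)/k^N$, bounded by $\Rz^N/(K^{N-1}\det\La)$, and this is absorbed by choosing $K$ of the order of $\Ad$.

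The crux is to identify the correct truncation level $K$ and to show it is $\ll \Ad$ with $\Ad$ as in the statement. The point is that $k\La\cap\Z$ can be nonempty only if $k\La$ contains a nonzero point of norm at most $\diam(\Z)/2\ll N(\Rz+|\phi(\vvy)|)$ after accounting for the box being a translate governed by $\vy$ (the factor $|\phi(\vvy)|$ precisely measures how far the box sits from the origin in the $\vbeta$-weighted coordinates), hence $k\,\la_1(\La)\leq$ that quantity, i.e.\ $k\ll N(\Rz+|\phi(\vvy)|)/\la_1(\La)$; refining this using that a point surviving in $\Z\setminus\C$ must also satisfy $\Nm\geq k^{\t}\nu(\La,\cdot)^{\t}$ and a point in $\C$ gives the $\la_1(\La\cap\C)$ bound yields $k\ll N^{2N+2}(\Rz+|\phi(\vvy)|)\left(\tfrac1\mu+\tfrac1\Rz\right)=\Ad$ after folding in the relevant power of $N$. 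I would then take $K=\lceil\Ad\rceil$, so that $\sum_{k\leq K}1\leq 2\Ad$ but, more usefully, $K\leq\Ad$ up to absolute constants, and bound $\sum_{1\leq k\leq K}1$ crudely by an upper bound for the number of $k$ that can possibly contribute --- but to get the clean $\T(\Ad)$ form one observes that only $k$ with a nonvanishing term count, and these are at most $\T(\Ad)$ in number once one notes $k\mid$ something of size $\Ad$, or more simply absorbs the count $K\ll\Ad$ into the monotone bound $\T(\Ad)$ on the divisor sum via $\sum_{d\mid k}1\leq\T(k)\leq\T(\Ad)$. Assembling the pieces, the total error is $\ll_{N,\ka,M}\left(\tfrac\Rz\mu+1\right)^{N-1}+\left(\tfrac\Rz\mu+1\right)\T(\Ad)$, which is the assertion; the main obstacle throughout is the careful bookkeeping of the truncation so that the tail of the M\"obius series and the count of surviving dilates are both genuinely controlled by the single geometric quantity $\Ad$.
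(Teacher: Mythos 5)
You have the paper's skeleton right: M\"obius inversion over the dilation index $k$, applying the lattice-point count to $k\La$ together with $\mu(k\La,\cdot)\geq k\,\mu(\La,\cdot)$, and truncating at $K\asymp\Ad$. But the two quantitative pivots of the proof are missing or incorrect. First, the count of the dilates $k$ that actually contribute: your claim that these number at most $\T(\Ad)$ because ``$k\mid$ something of size $\Ad$'' is false in general (take $\La=\IZ^N$ and $\Z=[-R,R]^N$: every $k\leq R$ has a nonzero point of $k\La$ in $\Z$, far more than any reasonable divisor bound), and your fallback of absorbing the full count $K\ll\Ad$ into $(\Rz/\mu+1)\T(\Ad)$ fails exactly in the inhomogeneous case, since $\Ad$ contains the terms $|\phi(\vvy)|/\mu+|\phi(\vvy)|/\Rz$ and can dwarf $(\Rz/\mu+1)\T(\Ad)$ when the box sits far from the origin. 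The correct statement is a product bound: choosing $A$ with $A\IZ^N=\phi\La$ and $\|A^{-1}\|\ll_N 1/\la_1(\phi\La)$ (a reduced-basis estimate, Lemma \ref{Operatornormest}), if $k\phi\La$ meets $B_{\phi(\vvy)}(\R)\setminus\{\vv0\}$ then $k$ divides one of $O(\R\|A^{-1}\|+1)$ nonzero integers each of modulus at most $(\R+|\phi(\vvy)|)\|A^{-1}\|$, so the number of contributing $k$ is $\ll_N \T(\Ad)\bigl(\Rz/\mu+1\bigr)$ (Lemmas \ref{1*est} and \ref{mainlemma}). This divisibility argument is the heart of Theorem \ref{Thm2} and is absent from your sketch. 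Relatedly, to use it you need the per-$k$ discrepancy in the form $\bigl(\Rz/(k\la_1(\phi\La))\bigr)^{N-1}+1^*(\cdot)$ with an indicator rather than the blanket ``$+1$'' obtained by quoting Theorem \ref{Thm1} as a black box; the paper reworks the counting principle (Lemma \ref{MV_CL}, Lemma \ref{Prop2}) and passes to $\Z^*=\Z\setminus\{\vv0\}$ for exactly this purpose. Note also that your assertion that only finitely many $k$ contribute to $\sum_k\mu(k)\#(\Z\cap k\La)$ is wrong whenever $\vv0\in\Z$, which is another reason the origin must be removed first.

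Second, your bound $\sum_{k\leq K}\bigl(\Rz/(k\mu)\bigr)^{N-1}\ll(\Rz/\mu)^{N-1}$ ``since $N\geq 2$ makes $\sum k^{-(N-1)}$ convergent'' is false for $N=2$: the sum is of size $(\Rz/\mu)\log K$, and the extra factor $\log K\asymp\log \Ab$ must be absorbed into $\T(\Ad)$; the paper does this by showing that any monotone upper bound for the divisor function satisfies $\T(32\Ab)\geq\log \Ab$. The truncation tail $\sum_{k>K}\Vol\Z/(k^N\det\La)\ll \Rz/\mu+1$ also requires $\Vol\Z/\det\La=\Vol\phi\Z/\det\phi\La\ll_N \R^N/\la_1(\phi\La)^N$ via Minkowski's second theorem, which your bookkeeping glosses over but which is repairable. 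The missing divisor/operator-norm lemma and the $N=2$ logarithm absorption, however, are genuine gaps: without them the stated inequality does not follow from your argument.
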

Note that for every $a>2$ there is a $b=b(a)\geq \exp(\exp(1))$ such that for $x\geq b$ we can take $T(x)=a^{\frac{\log x}{\log\log x}}$. 
We use $\Rz+|\phi(\vvy)|\leq \Rz(1+|\vvy|/\Qmin)$ and ${1}/{\mu}+{1}/{\Rz}\leq 2/\mu$
to obtain the following corollary.

\begin{corollary}\label{Cor}
Suppose $\La$ is a weakly admissible lattice for $(\S,\C)$ and $a>2$. Then there exists a constant $\ctwo=\ctwo(a,N,\ka,M,|\vvy|)$, depending 
only on $a,N,\ka,M$ and $|\vvy|$ such that for all $\Rz\geq b\mu$ we have
\begin{alignat*}3
\left|\#(\Z\cap\La^*)-\frac{\Vol \Z}{\zeta(N)\det \La}\right|
\leq \ctwo\left(\left(\frac{\Rz}{\mu}\right)^{N-1}+a^{\frac{\log(\eta\Rz/\mu)}{\log\log(\eta\Rz/\mu)}}\left(\frac{\Rz}{\mu}\right)\right),
\end{alignat*}
where 
$\mu=\mul$, and $\eta=1+|\vvy|/\Qmin$.
\end{corollary}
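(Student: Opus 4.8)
\emph{Proof sketch.}
The plan is to read Corollary \ref{Cor} off from Theorem \ref{Thm2} by inserting the explicit majorant $\T$ of the divisor function mentioned in the remark just above, and then simplifying. Write $f(x):=\log x/\log\log x$ and $\tau(k):=\sum_{d\mid k}1$. I would first recall the classical bound of Wigert: for every $\varepsilon>0$ one has $\tau(k)\le 2^{(1+\varepsilon)\log k/\log\log k}$ for all sufficiently large $k$; since $a>2$, choosing $\varepsilon:=\log_2 a-1>0$ gives $\tau(k)\le a^{f(k)}$ for all $k\ge k_0(a)$. Since $f'(x)$ has the sign of $\log\log x-1$, the function $f$, hence also $x\mapsto a^{f(x)}$, is nondecreasing on $[\exp(\exp(1)),\infty)$. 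Enlarging $k_0$ if necessary, set $b=b(a):=\max\{k_0,\exp(\exp(1))\}$ large enough that $a^{f(b)}\ge\max_{1\le j\le b}\tau(j)$; then $\T$ defined by $\T(x):=a^{f(b)}$ on $[0,b)$ and $\T(x):=a^{f(x)}$ on $[b,\infty)$ is monotonic increasing, is $\ge 1$, and satisfies $\T(k)\ge\tau(k)$ for all $k\in\IN$. This is exactly the content of the remark preceding the corollary, and Theorem \ref{Thm2} applies with this $\T$.

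Next I would bound the argument $\Ad=N^{2N+2}(\Rz+|\phi(\vvy)|)(1/\mu+1/\Rz)$ occurring there, with $\mu=\mul$. The $i$-th block $\Rz\vy_i/\Qi$ of $\phi(\vvy)$ has Euclidean norm at most $\Rz|\vy_i|/\Qmin$, so $|\phi(\vvy)|\le\Rz|\vvy|/\Qmin$ and therefore $\Rz+|\phi(\vvy)|\le\eta\Rz$ with $\eta=1+|\vvy|/\Qmin$. The hypothesis $\Rz\ge b\mu$, together with $b\ge\exp(\exp(1))>1$, forces $\mu<\Rz$, whence $1/\mu+1/\Rz\le 2/\mu$. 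Combining these,
\[
N^{2N+2}\,\frac{\Rz}{\mu}\ \le\ \Ad\ \le\ 2N^{2N+2}\,\eta\,\frac{\Rz}{\mu},
\]
and in particular $\Ad\ge N^{2N+2}b\ge b$, so $\T(\Ad)=a^{f(\Ad)}$, and by monotonicity $\T(\Ad)\le a^{f(2N^{2N+2}\eta\Rz/\mu)}$.

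To pull the constant $2N^{2N+2}$ out of that argument I would use the elementary inequality $f(cx)\le f(x)+\log c$, valid for $c\ge 1$ and $x\ge\exp(\exp(1))$: there $\log\log(cx)\ge\log\log x\ge 1$, so $f(cx)=(\log c+\log x)/\log\log(cx)\le\log c/\log\log x+f(x)\le\log c+f(x)$. Applied with $c=2N^{2N+2}$ and $x=\eta\Rz/\mu\ge b\ge\exp(\exp(1))$ this gives $\T(\Ad)\le(2N^{2N+2})^{\log a}\,a^{f(\eta\Rz/\mu)}$. Substituting into the estimate of Theorem \ref{Thm2} and using $\Rz/\mu\ge b>1$ to replace each factor $\Rz/\mu+1$ by $2\Rz/\mu$ then yields the asserted inequality with $\ctwo:=\cone\cdot\max\{2^{N-1},\,2(2N^{2N+2})^{\log a}\}$, which depends only on $a,N,\ka,M$, and a fortiori only on $a,N,\ka,M,|\vvy|$.

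I do not expect any genuine obstacle here, since all the real work is carried out in Theorem \ref{Thm2}. The only points needing routine care are the Wigert-type bound that legitimises the choice $\T(x)=a^{\log x/\log\log x}$, the verification that this $\T$ can be made truly monotone and $\ge\tau(k)$ for \emph{every} $k\in\IN$ rather than merely for large $k$, and the scaling inequality $f(cx)\le f(x)+\log c$ needed to strip the constants $2$ and $N^{2N+2}$ out of the superlogarithmic term; each of these is elementary.
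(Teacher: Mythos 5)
Your proposal is correct and follows essentially the same route as the paper, which obtains Corollary \ref{Cor} directly from Theorem \ref{Thm2} via the remark preceding it: choosing $\T(x)=a^{\log x/\log\log x}$ for $x\geq b(a)$ (justified by the Wigert-type divisor bound) and using $\Rz+|\phi(\vvy)|\leq \Rz(1+|\vvy|/\Qmin)$ and $1/\mu+1/\Rz\leq 2/\mu$. Your additional details (monotonicity of $\log x/\log\log x$ on $[\exp(\exp(1)),\infty)$ and the inequality $f(cx)\leq f(x)+\log c$ to absorb the constants into $\ctwo$) are exactly the routine verifications the paper leaves implicit.
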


Next we consider applications to Diophantine approximation.
Let $\Theta\in \Mat_{r\times s}(\IR)$ be a matrix with $r$ rows and $s$ columns and suppose that
$\App:[1,\infty)\rightarrow (0,1]$ is a non-increasing function such that  
\begin{alignat}1\label{Diophcond}
|\vp+\Theta\vq|{|\vq|}^{\beta}\geq \App(|\vq|)
\end{alignat}
for every $(\vp,\vq)$ with $\vq\neq \vNull$. 
Let $\vy$ be in $\IR^r$, $Q\geq 1$, and let $0<\epsilon\leq 1$. We consider the system 
\begin{alignat}1
\label{hallo1} 
\vp+\Theta\vq-\vy\in [0,\epsilon]^r\\
\label{hallo2} 
\vq\in [0,Q]^s.
\end{alignat}
Let $N^*_{\Theta,\vy}(\epsilon,Q)$ be the number of $(\vp,\vq)\in \IZ^{r+s}$ that satisfy the above system and 
have coprime coordinates, i.e., $\gcd(p_1,\ldots,p_r,q_1,\ldots,q_s)=1$. 
In the one-dimensional case $r=s=1$ Chalk and Erd\H{o}s \cite{ChalkErdos1959} proved in 1959
 that if $\Theta$ is an irrational number and $\epsilon=\epsilon(\vq)=(1/\vq)(\log \vq/\log\log \vq)^2$ then 
 (\ref{hallo1}) has infinitely many coprime solutions, i.e., 
 $N^*_{\Theta,\vy}(\epsilon,Q)$ is unbounded as $Q$ tends to infinity. 
 No improvements or generalisations 
 have been obtained since. 

The following corollary follows straightforwardly from Corollary \ref{Cor}, and we leave the
proof to the reader. We suppose $\epsilon=\epsilon(Q)$ is a function of $Q$, and that $\epsilon\cdot Q^\beta$ tends to
infinity as $Q$ tends to infinity.
\begin{corollary}\label{Cor2}
Suppose $a>2$. Then, as $Q$ tends to infinity, we have
\begin{alignat*}3
N^*_{\Theta,\vy}(\epsilon,Q)=\frac{\epsilon^r Q^s}{\zeta(r+s)}+O(u^{r+s-1}+ua^{\frac{\log\delta}{\log\log\delta}}),
\end{alignat*}
where 
$u=\left(\frac{\epsilon Q^\beta}{\App(Q)}\right)^{1/(1+\beta)}$, and $\delta=\left(\frac{1}{\App(Q)}\left(\frac{Q}{\epsilon}\right)^\beta\right)^{1/(1+\beta)}$.
\end{corollary}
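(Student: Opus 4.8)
The plan is to deduce this from Corollary~\ref{Cor} by realising $N^*_{\Theta,\vy}(\epsilon,Q)$ as the number of primitive points of a suitable unimodular lattice in an aligned box. I would take $\La$ to be the lattice $(\ref{Dioappexample})$ with the structure $(\S,\C)$ given by $n=2$, $\me=r$, $m_2=s$, $\vbeta=(1,\beta)$ and $\C=\{(\vx_1,\vx_2);\,\vx_2=\vNull\}$; then $\det\La=1$ and $N=r+s>1$. Since $\App>0$, hypothesis $(\ref{Diophcond})$ forces $\vp+\Theta\vq\neq\vNull$ whenever $\vq\neq\vNull$, so $\La$ is weakly admissible for $(\S,\C)$; more precisely, $(\ref{Diophcond})$ together with the monotonicity of $\App$ yields the quantitative bound $\nu(\La,\rho)\ge\App(\rho)^{1/(1+\beta)}$ for all $\rho\ge1$. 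The linear isomorphism $(\vp,\vq)\mapsto(\vp+\Theta\vq,\vq)$ identifies $\IZ^{r+s}$ with $\La$, maps the solutions of $(\ref{hallo1})$--$(\ref{hallo2})$ bijectively onto $\La\cap\Z$ with $\Z:=(\vy+[0,\epsilon]^r)\times[0,Q]^s$, and turns coprimality of $(p_1,\dots,p_r,q_1,\dots,q_s)$ into primitivity in $\La$ (for an integer $k>1$ the point $(\vp+\Theta\vq,\vq)$ is $k$ times a point of $\La$ exactly when $k$ divides every $p_i$ and every $q_j$). Hence $N^*_{\Theta,\vy}(\epsilon,Q)=\#(\Z\cap\La^*)$.

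Next I would place $\Z$ in the framework of Corollary~\ref{Cor}. As a bounded box it is convex, so it lies in $\F_{\ka,M}$ with $\ka=16(r+s)^{5/2}$, $M=1$ by \cite[Theorem 2.6]{WidmerLNCLP}. For condition~(2) I would use the enclosing balls $\pi_1(\Z)\subset B_{\vy}(\epsilon\sqrt r)$ and $\pi_2(\Z)\subset B_{\vNull}(Q\sqrt s)$, i.e.\ $\vy_1=\vy$, $Q_1=\epsilon\sqrt r$, $\vy_2=\vNull$, $Q_2=Q\sqrt s$; the point is to centre these balls at a \emph{corner} of each box rather than at its centre, so that $|\vvy|=|\vy|$ stays bounded as $Q\to\infty$, which is what keeps the error term of the right order. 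One checks $(\ref{Qorder})$, namely $Q_1\le\Rz$, for all sufficiently large $Q$. With these choices $\Rz=(Q_1Q_2^{\beta})^{1/(1+\beta)}\asymp_{r,s,\beta}(\epsilon Q^{\beta})^{1/(1+\beta)}$ and $\Qm=Q_2\asymp_{s}Q$ once $Q$ is large, while $\La\cap\C=\IZ^r\times\{\vNull\}$ gives $\la_1(\La\cap\C)=1$; combined with the lower bound for $\nu(\La,\cdot)$ this yields $\mu:=\mu(\La,\Qm)\ge\App(\Qm)^{1/(1+\beta)}$. A short computation then gives $\Rz/\mu\ll u$ and, with $\eta=1+|\vy|/(\epsilon\sqrt r)\ll1/\epsilon$, also $\eta\Rz/\mu\ll\delta$. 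Finally, $\epsilon Q^{\beta}\to\infty$ forces $\Rz/\mu\ge\Rz\to\infty$, so the hypothesis $\Rz\ge b\mu$ of Corollary~\ref{Cor} is satisfied for $Q$ large.

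Now I would simply apply Corollary~\ref{Cor}. Its main term is $\Vol\Z/(\zeta(N)\det\La)=\epsilon^rQ^s/\zeta(r+s)$, matching the assertion, and its error is $\ll(\Rz/\mu)^{r+s-1}+a^{\log(\eta\Rz/\mu)/\log\log(\eta\Rz/\mu)}(\Rz/\mu)$. Substituting $\Rz/\mu\ll u$ and $\eta\Rz/\mu\ll\delta$, and using the elementary fact that $a^{\log x/\log\log x}$ is altered only by a factor tending to $1$ when $x$ is replaced by a bounded multiple of itself, one obtains the error $\ll u^{r+s-1}+u\,a^{\log\delta/\log\log\delta}$, which is exactly the claim.

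The only real work is the bookkeeping in the second step: arranging the enclosing balls so that $|\vvy|$ stays bounded while $Q_1,Q_2$ keep the right orders of magnitude, and then matching $\Rz,\mu,\eta$ against $u,\delta$. I expect this to be the main, if modest, obstacle. Two small points need care: $\App$ ends up evaluated at a bounded multiple of $Q$ rather than at $Q$ exactly, which is harmless for the regular weights---$\App$ a negative power of $Q$ or of $\log Q$---that arise in practice; and the bounded-factor and bounded-power discrepancies between $\Rz/\mu,\eta\Rz/\mu$ and $u,\delta$ are absorbed both in the $(\,\cdot\,)^{r+s-1}$ term and, thanks to the slow growth of $\log x/\log\log x$, in the $a^{(\,\cdot\,)}$ term.
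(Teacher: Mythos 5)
Your proposal is correct and is precisely the argument the paper intends: the paper gives no proof of Corollary \ref{Cor2} beyond saying it follows straightforwardly from Corollary \ref{Cor}, and your reduction --- the lattice (\ref{Dioappexample}) with $n=2$, $\vbeta=(1,\beta)$, $\C=\{\vvx;\,\vx_2=\vNull\}$, the box $(\vy+[0,\epsilon]^r)\times[0,Q]^s$ mapped to $\Z\cap\La^*$, corner-centred enclosing balls so that $|\vvy|$ stays bounded and $\eta\ll 1/\epsilon$, together with $\la_1(\La\cap\C)=1$ and $\nu(\La,\rho)\geq\App(\rho)^{1/(1+\beta)}$ from (\ref{Diophcond}) --- is exactly that straightforward deduction, with the matching $\Rz/\mu\ll u$, $\eta\Rz/\mu\ll\delta$ carried out correctly. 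The only blemish is the one you flag yourself, namely that $\App$ is evaluated at a bounded multiple of $Q$ (since $\Qm$ must be the radius of a Euclidean ball containing $[0,Q]^s$); this matters only for very irregular $\App$ and is immaterial for the paper's subsequent applications (constant $\App$, or the almost-everywhere Khintchine--Groshev setting).
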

Corollary \ref{Cor2} also implies new results on how quickly $\epsilon$ can decay so that (\ref{hallo1}) still has infinitely many coprime solutions.
As an example let us suppose that $\Theta$ is a badly approximable matrix so that in (\ref{Diophcond}) we can choose $\beta=\s/\r$ and $\App(\cdot)$ to be constant.
A straightforward computation shows that if $c>2^{(\r\s+\s^2)/(\r^2(\r+\s-1))}$ and $\epsilon=\epsilon(Q)=Q^{-\s/\r}c^{\log Q/\log\log Q}$ then $N^*_{\Theta,\vy}(\epsilon,Q)$ tends to infinity as $Q$ does. In particular, if 
$\epsilon=\epsilon(|\vq|_\infty)=|\vq|_\infty^{-\s/\r}c^{\log |\vq|_\infty/\log\log |\vq|_\infty}$ then (\ref{hallo1}) has infinitely many coprime solutions\footnote{Here $|\cdot|_\infty$ denotes the maximum norm.}.
To the best of the author's knowledge this is the first
such result 
result in arbitrary dimensions.

A similar simple calculation shows that Corollary \ref{Cor2} in conjunction with the classical Khintchine Groshev Theorem 
implies that the same holds true not only for badly approximable matrices $\Theta$ but for almost\footnote{With respect to the Lebesgue measure.} every $\Theta\in \Mat_{r\times s}(\IR)$.

Finally, we mention a connection to a question of Dani, Laurent and Nogueira \cite{DaniLaurentNogueira2014,LaurentNogueira2012}.
Suppose $\epsilon:[1,\infty)\rightarrow (0,1]$ and $Q^{\s-1}\epsilon(Q)^{\r}$ is non-increasing. Dani, Laurent and Nogueira conjecture\footnote{ In fact their conjecture is more general
but the mentioned special case is probably the most natural case.}
\cite[2. paragraph after Theorem 1.1]{DaniLaurentNogueira2014}
that if $\sum_{j\in \IN}j^{\s-1}\epsilon(j)^\r=\infty$
then for almost
every $\Theta\in \Mat_{r\times s}(\IR)$ there exist infinitely many coprime solutions
of (\ref{hallo1}), where again we interpret $\epsilon=\epsilon(|\vq|_\infty)$ as a function evaluated at $|\vq|_\infty$.
We cannot prove this conjecture but, as mentioned before, our result shows at least that
we have infinitely many such solutions for almost every $\Theta$ if  $\epsilon(Q)\gg Q^{-\s/\r}c^{\log Q/\log\log Q}$ and $c>2^{(\r\s+\s^2)/(\r^2(\r+\s-1))}$.

\section{Basic counting principle}\label{countingprinciples}
Let $\Da\geq 2$ be an integer. Let $\Lambda$ be a lattice of rank $\Da$ in $\IR^\Da$. 
Recall that $B_P(R)$ denotes the closed Euclidean ball about $P$ of radius $R$.
We define the successive minima 
 $\lambda_1(\Lambda),\ldots,\lambda_\Da(\Lambda)$ of $\Lambda$ as the successive minima in the sense of Minkowski with respect
to the Euclidean unit ball. That is
\begin{alignat*}3
\lambda_i=\inf \{\lambda; B_0(\lambda)\cap \Lambda
\text{ contains $i$ linearly independent vectors}\}.
\end{alignat*}
\begin{definition}\label{Lip}
Let $M$ be a positive integer, and let $L$ be a non-negative real number.
We say that a set $S$ is in Lip$(\Da,M,L)$ if 
$S$ is a subset of $\IR^\Da$, and 
if there are $M$ maps 
$\phi_1,\ldots,\phi_M:[0,1]^{\Da-1}\longrightarrow \IR^\Da$
satisfying a Lipschitz condition
\begin{alignat*}3
|\phi_i(\vx)-\phi_i(\vy)|\leq L|\vx-\vy| \text{ for } \vx,\vy \in [0,1]^{\Da-1}, i=1,\ldots,M 
\end{alignat*}
such that $S$ is covered by the images
of the maps $\phi_i$. 
\end{definition}
For any set $S$ we write
\begin{equation*}
1^*(S)= 
\begin{cases}  1& \text{if $S\neq \emptyset$,}
\\
0&\text{if $S=\emptyset$.}
\end{cases}
\end{equation*}
We will apply the following basic counting principle.

\begin{lemma}\label{MV_CL}
Let $\Lambda$ be a lattice in $\IR^\Da$
with successive minima $\lambda_1,\ldots, \lambda_\Da$.
Let $S$ be a set in $\IR^\Da$ such that
the boundary $\partial S$ of $S$ is in Lip$(\Da,M,L)$, and suppose $S\subset B_P(L)$ for some point $P$.
Then $S$ is measurable, and moreover,
\begin{alignat*}3
\left|\#(S\cap\Lambda)-\frac{\Vol S}{\det \Lambda}\right|
\leq \ccountlatticepts(\Da) M\left(\left(\frac{L}{\lambda_1}\right)^{\Da-1}+1^*(S\cap \Lambda)\right),
\end{alignat*}
where $\ccountlatticepts(\Da)=\Da^{3\Da^2/2}$.
\end{lemma}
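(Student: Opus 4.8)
The plan is to prove Lemma \ref{MV_CL} by reducing the general lattice case to the case $\Lambda = \IZ^\Da$ via a linear change of variables, and then invoking a known lattice-point counting estimate for Lipschitz-parametrisable boundaries (this is essentially the estimate of Masser--Vaaler / the author's earlier work, cf. \cite{WidmerLNCLP}). First I would fix a basis $v_1,\ldots,v_\Da$ of $\Lambda$ realising (up to a bounded factor) the successive minima $\lambda_1,\ldots,\lambda_\Da$, let $A = (v_1|\cdots|v_\Da)$ be the corresponding matrix, and set $S' = A^{-1}S$, so that $\#(S\cap\Lambda) = \#(S'\cap\IZ^\Da)$ and $\Vol S' = \Vol S/\det\Lambda$. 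The point is that the Lipschitz structure is transported: if $\partial S$ is covered by $M$ maps $\phi_i$ with constant $L$, then $\partial S' = A^{-1}\partial S$ is covered by the $M$ maps $A^{-1}\phi_i$ with Lipschitz constant $L\cdot\|A^{-1}\|$, and one controls $\|A^{-1}\|$ by the successive minima of $\Lambda$ (the columns of $A^{-1}$ are, up to the dual-lattice identification, bounded in terms of $1/\lambda_1$ times a power of $\Da$, by Minkowski's second theorem together with the standard bound relating $\lambda_i(\Lambda)$ and $\lambda_{\Da+1-i}(\Lambda^\perp)$).

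Next I would record the clean counting statement for the integer lattice: if $T\subset\IR^\Da$ has $\partial T \in \mathrm{Lip}(\Da,M,L_0)$ then
\begin{alignat*}3
\left|\#(T\cap\IZ^\Da) - \Vol T\right| \leq c(\Da)\,M\,(L_0^{\,\Da-1}+1),
\end{alignat*}
and moreover the ``$+1$'' can be sharpened to $1^*(T\cap\IZ^\Da)$ since, if $T$ contains no integer point, the term is vacuous and if $\partial T$ is empty one argues separately (either $T=\IR^\Da$, excluded by boundedness, or $T=\emptyset$). Applying this with $T = S'$ and $L_0 = L\,\|A^{-1}\|_{\mathrm{op}}$, and then bounding $\|A^{-1}\|_{\mathrm{op}} \ll \Da^{?}/\lambda_1(\Lambda)$, one gets exactly a bound of the shape $c'(\Da)\,M\,((L/\lambda_1)^{\Da-1}+1^*(S\cap\Lambda))$. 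The hypothesis $S\subset B_P(L)$ enters to guarantee that $S'$ is bounded (hence the counting estimate applies and measurability holds) and, more importantly, to ensure the implied geometric constant is absolute: one can translate $S$ so that $P=0$, $S\subset B_0(L)$, whence $S'\subset B_0(L\|A^{-1}\|)$, and the number of integer points is automatically $\ll (L\|A^{-1}\|)^\Da$; but the sharper Lipschitz-boundary estimate is what yields the $(\Da-1)$-exponent rather than $\Da$.

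The main obstacle, and the step that needs the most care, is tracking the explicit constant: the statement claims $\ccountlatticepts(\Da)=\Da^{3\Da^2/2}$, which is a strong explicit bound, so I would need to be precise about (a) the constant in the integer-lattice Lipschitz-boundary estimate, (b) the loss in $\|A^{-1}\|$ coming from replacing an arbitrary basis by a Minkowski-reduced one and from the dual-lattice successive-minima inequalities $\lambda_i(\Lambda)\lambda_{\Da+1-i}(\Lambda^\perp)\leq \Da!$ (or the cleaner $\lambda_1(\Lambda^\perp)\leq \Da/\lambda_1(\Lambda)$ type bounds), and (c) combining these multiplicatively through the $(\Da-1)$-th power. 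Concretely I expect $\|A^{-1}\|_{\mathrm{op}} \leq \Da^{3/2}/\lambda_1(\Lambda)$ after choosing the basis carefully (via Minkowski reduction, where $|v_i|\leq c(\Da)\lambda_i$ and the Gram--Schmidt norms are comparable to the $\lambda_i$), so that $L_0^{\Da-1}\leq \Da^{3(\Da-1)/2}(L/\lambda_1)^{\Da-1}$; feeding this into the integer-lattice estimate and absorbing everything into a single power of $\Da$ should comfortably land inside $\Da^{3\Da^2/2}$. The remaining routine points — measurability of $S$ (immediate from $\partial S\in\mathrm{Lip}$, already noted in the excerpt following the definition of $\F_{\ka,M}$), and the case distinctions when $\partial S=\emptyset$ or $S\cap\Lambda=\emptyset$ — I would dispatch briefly at the end.
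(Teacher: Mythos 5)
Your overall reduction (change of variables to $\IZ^\Da$, transporting the Lipschitz data, controlling $\|A^{-1}\|$ by $1/\lambda_1$) could be pushed through, but it is more roundabout than needed and, as written, has a genuine gap at the one point where the lemma has real content. The counting input the paper uses, \cite[Theorem 5.4]{art1}, is already stated for an arbitrary lattice, with the bound $\Da^{3\Da^2/2}M\max_{1\leq j<\Da}\{1,L^j/(\lambda_1\cdots\lambda_j)\}$; so no basis choice, no dual-basis or operator-norm estimates, and no Minkowski reduction are required, and the whole proof consists of converting the $\max\{1,\cdot\}$ into $(L/\lambda_1)^{\Da-1}+1^*(S\cap\Lambda)$ by a case analysis. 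That conversion is exactly the step you dismiss: your assertion that when $T$ contains no integer point ``the term is vacuous'' is backwards. If $S\cap\Lambda=\emptyset$ then $1^*=0$ and the claimed inequality reads $\Vol S/\det\Lambda\leq c(\Da)M(L/\lambda_1)^{\Da-1}$, which is precisely the nontrivial situation that the discarded ``$+1$'' was covering (think of $S$ of positive volume, containing no lattice point, with short boundary and $L\ll\lambda_1$). This is where the hypothesis $S\subset B_P(L)$ must actually be used: if $L\geq\lambda_1$ the term $(L/\lambda_1)^{\Da-1}\geq 1$ absorbs everything, while if $L<\lambda_1$ and $S\cap\Lambda=\emptyset$ one bounds $\Vol S\leq (2L)^{\Da}$ and invokes Minkowski's second theorem to get $\Vol S/\det\Lambda\leq (2L)^{\Da}/(\lambda_1\cdots\lambda_\Da)\leq 2^{\Da}(L/\lambda_1)^{\Da-1}$; that is the paper's argument. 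The same repair is available inside your framework (transport the ball: $S'\subset B_{A^{-1}P}(L_0)$ with $L_0=L\|A^{-1}\|$, and when $L_0<1$ and $S'\cap\IZ^\Da=\emptyset$ bound the error by $\Vol S'\leq(2L_0)^\Da\leq 2^\Da L_0^{\Da-1}$), but in your proposal the ball hypothesis is only invoked for boundedness and measurability, never in the empty-intersection case where it is indispensable.

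A secondary issue is the explicit constant. The value $\ccountlatticepts(\Da)=\Da^{3\Da^2/2}$ is inherited verbatim from \cite[Theorem 5.4]{art1}; your route pays an additional factor of order $(\lambda_1\|A^{-1}\|)^{\Da-1}$. The paper's own Lemma \ref{Operatornormest} (proved for a different purpose) only achieves $\|A^{-1}\|\leq\Da^{2\Da+1}/\lambda_1$, and your hoped-for $\Da^{3/2}/\lambda_1$ is optimistic, so you would most likely land above $\Da^{3\Da^2/2}$; this is not cosmetic here, since that constant is fed into the definition of $\cthree$ in Theorem \ref{Thm1}.
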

\begin{proof}
By \cite[Theorem 5.4]{art1} the set $S$ is measurable, and moreover,
\begin{alignat}3\label{thm5.4}
\left|\#(S\cap\Lambda)-\frac{\Vol S}{\det \Lambda}\right|
\leq \Da^{3\Da^2/2}M\max_{1\leq j<\Da}\left\{1,\frac{L^j}{\lambda_1\cdots \lambda_j}\right\}.
\end{alignat}
First suppose $L\geq \lambda_1$. Then the lemma follows immediately from (\ref{thm5.4}).
Next we assume $L<\lambda_1$. We distinguish two subcases.
First suppose  $S\cap\Lambda\neq \emptyset$. Then
\begin{alignat*}3
\max_{1\leq j<\Da}\left\{1,\frac{L^j}{\lambda_1\cdots \lambda_j}\right\}=1=1^*(S\cap \Lambda)\leq\left(\frac{L}{\lambda_1}\right)^{\Da-1}+1^*(S\cap \Lambda).
\end{alignat*}
Now suppose $S\cap\Lambda=\emptyset$. 
As $L<\lambda_1$ we get, using Minkowski's second Theorem,  
\begin{alignat*}1 
\left|\#(S\cap\Lambda)-\frac{\Vol S}{\det \Lambda}\right|=\frac{\Vol S}{\det \Lambda}\leq \frac{(2L)^\Da}{\lambda_1\cdots \lambda_\Da}\leq
2^{\Da}\left(\frac{L}{\lambda_1}\right)^{\Da-1}.
\end{alignat*}
This proves the lemma.
\end{proof}

\section{Proof of Theorem \ref{Thm1}}
Let $\thi=\Rz/\Qi$  ($1\leq i\leq n$), and let $\phi$ be the automorphism of $\E$ defined by 
\begin{alignat*}1
\phi(\vvx):=(\tho\vx_1,\ldots,\thn\vx_n).
\end{alignat*}
Set
\begin{alignat*}1
\thm:=\min_{1\leq i\leq n}\thi=\Rz/\Qm.
\end{alignat*}
Note that by (\ref{Qorder}) we have
\begin{alignat}3\label{thetaorder}
\thi\geq 1\;(\text{for  all } i\notin I).
\end{alignat}
Moreover,
\begin{alignat*}3
\prod_{i=1}^{n}\thi^{\beta_i}=1,
\end{alignat*}
and hence,
\begin{alignat}3\label{Nminv}
\Nm(\phi\vvx)=\Nm(\vvx).
\end{alignat}

\begin{lemma}\label{Lip-par}
We have $\partial \phi(\Z)\in$ Lip$(N,M,L)$ for $L=2n^{1/2}\ka\Rz$.
\end{lemma}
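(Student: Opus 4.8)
The plan is to reduce the claim about $\partial\phi(\Z)$ to the hypothesis $\Z\in\F_{\ka,M}$ applied to the specific automorphism $A=\phi$. Recall that $\Z\in\F_{\ka,M}$ means, by definition, that $\partial(A\Z)\in\text{Lip}(N,M,\ka\cdot\diam(A\Z))$ for \emph{every} $A\in\Aut_N(\IR)$; in particular this holds for $A=\phi$, since $\phi$ is an automorphism of $\E=\IR^N$. So $\partial\phi(\Z)\in\text{Lip}(N,M,\ka\cdot\diam(\phi\Z))$ immediately, with the \emph{same} number $M$ of maps. The only thing left to check is that the Lipschitz constant $\ka\cdot\diam(\phi\Z)$ is at most $L=2n^{1/2}\ka\Rz$; since $\text{Lip}(N,M,L')\subset\text{Lip}(N,M,L)$ whenever $L'\le L$ (enlarging the Lipschitz bound only weakens the condition), it suffices to prove $\diam(\phi\Z)\le 2n^{1/2}\Rz$.

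To bound $\diam(\phi\Z)$, I would use hypothesis (2) on $\Z$, namely $\pi_i(\Z)\subset \Byi(\Qi)$, together with the formula $\phi(\vvx)=(\tho\vx_1,\ldots,\thn\vx_n)$ and $\thi=\Rz/\Qi$. For $\vvx\in\Z$ we have $\vx_i\in\Byi(\Qi)$, so $|\vx_i-\vy_i|\le \Qi$, hence $|\thi\vx_i-\thi\vy_i|=\thi|\vx_i-\vy_i|\le\thi\Qi=\Rz$. Thus $\phi(\Z)$ is contained in the product of Euclidean balls $\prod_{i=1}^n B_{\thi\vy_i}(\Rz)\subset\IR^{\me}\times\cdots\times\IR^{\mn}$, which is itself contained in the Euclidean ball of radius $\sqrt{\sum_{i=1}^n\Rz^2}=n^{1/2}\Rz$ about the point $(\tho\vy_1,\ldots,\thn\vy_n)$. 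Two points in a ball of radius $n^{1/2}\Rz$ are at distance at most $2n^{1/2}\Rz$, so $\diam(\phi\Z)\le 2n^{1/2}\Rz$, as needed.

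Putting the two pieces together: $\partial\phi(\Z)\in\text{Lip}(N,M,\ka\cdot\diam(\phi\Z))\subset\text{Lip}(N,M,2n^{1/2}\ka\Rz)=\text{Lip}(N,M,L)$, which is exactly the assertion of the lemma. There is no real obstacle here; the lemma is essentially a bookkeeping step that packages the definition of $\F_{\ka,M}$ together with the geometric constraint (2) on $\Z$ and the explicit scaling factors $\thi=\Rz/\Qi$ into a single clean Lipschitz bound that does not reference the individual $\Qi$. The mild point to be careful about is simply the monotonicity of the Lipschitz classes in the third parameter and the elementary estimate $\diam(\text{product of balls of radius }\Rz\text{ in }n\text{ blocks})\le 2n^{1/2}\Rz$; both are routine.
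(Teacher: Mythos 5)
Your proof is correct and follows essentially the same route as the paper: apply the defining property of $\F_{\ka,M}$ with $A=\phi$, and bound $\diam(\phi\Z)\le 2n^{1/2}\Rz$ via the containment $\phi(\Z)\subset B_{\tho\vy_1}(\Rz)\times\cdots\times B_{\thn\vy_n}(\Rz)\subset B_{\phi\vvy}(n^{1/2}\Rz)$ coming from condition (2) and $\thi\Qi=\Rz$. The only difference is that you spell out the monotonicity of the Lip classes in the third parameter, which the paper leaves implicit.
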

\begin{proof}
We have 
\begin{alignat*}1
\phi(\Z)\subset \phi(B_{\vy_1}(\Qo)\times\cdots \times B_{\vy_n}(\Qn))=B_{\tho\vy_1}(\Rz)\times\cdots \times B_{\thn\vy_n}(\Rz),
\end{alignat*}
and hence, $\phi(\Z)\subset B_{\phi\vvy}(\dS\Rz)$. As $\Z\in \F_{\ka,M}$ the claim follows.
\end{proof}

\begin{lemma}\label{Prop1}
The set $\Z$ is measurable and 
\begin{alignat*}3
\left|\#(\Z\cap\La)-\frac{\Vol \Z}{\det \La}\right|
\leq \cfour\left(\left(\frac{\Rz}{\la_1(\phi\La)}\right)^{N-1}+1^*(\phi\Z\cap\phi\La)\right),
\end{alignat*}
where $\cfour=(1+2n^{1/2}\ka)^{N-1}M\ccountlatticepts(N)$.
\end{lemma}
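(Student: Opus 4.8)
The plan is to transport the counting problem through the balancing automorphism $\phi$ and then quote the basic counting principle, Lemma \ref{MV_CL}. First I would observe that the error term $\LEZ$ is unchanged when we replace the pair $(\Z,\La)$ by $(\phi\Z,\phi\La)$. Indeed, $\phi$ is a linear bijection of $\IR^N$, so it restricts to a bijection $\Z\cap\La\to\phi\Z\cap\phi\La$ and hence $\#(\Z\cap\La)=\#(\phi\Z\cap\phi\La)$; moreover $\det\phi=\prod_{i=1}^{n}\thi^{\mi}\neq 0$, so $\Vol(\phi\Z)=|\det\phi|\,\Vol\Z$ and $\det(\phi\La)=|\det\phi|\,\det\La$, whence $\Vol(\phi\Z)/\det(\phi\La)=\Vol\Z/\det\La$. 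Therefore
\[
\left|\#(\Z\cap\La)-\frac{\Vol\Z}{\det\La}\right|=\left|\#(\phi\Z\cap\phi\La)-\frac{\Vol(\phi\Z)}{\det(\phi\La)}\right| .
\]
The point of applying $\phi$ is exactly that $\phi\Z$ sits inside a product of Euclidean balls all of radius $\Rz$, so the resulting bound will involve the geometric mean $\Rz$ and $\la_1(\phi\La)$ rather than $\Qm$ and $\la_1(\La)$.

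Next I would apply Lemma \ref{MV_CL} with $\Da=N$, $S=\phi\Z$, $\Lambda=\phi\La$, and $L:=(1+2n^{1/2}\ka)\Rz$. By Lemma \ref{Lip-par} the boundary $\partial(\phi\Z)$ lies in $\text{Lip}(N,M,2n^{1/2}\ka\Rz)$, hence also in $\text{Lip}(N,M,L)$, since a Lipschitz cover with a given constant is a fortiori a Lipschitz cover with any larger constant; and the proof of Lemma \ref{Lip-par} exhibits the inclusion $\phi\Z\subset B_{\phi\vvy}(\dS\Rz)\subset B_{\phi\vvy}(L)$. Thus both hypotheses of Lemma \ref{MV_CL} hold with this single $L$, and the lemma gives that $\phi\Z$, and hence $\Z=\phi^{-1}(\phi\Z)$, is measurable, and that
\[
\left|\#(\phi\Z\cap\phi\La)-\frac{\Vol(\phi\Z)}{\det(\phi\La)}\right|\leq \ccountlatticepts(N)\,M\left(\left(\frac{L}{\la_1(\phi\La)}\right)^{N-1}+1^*(\phi\Z\cap\phi\La)\right).
\]
Substituting $L=(1+2n^{1/2}\ka)\Rz$, pulling out the factor $(1+2n^{1/2}\ka)^{N-1}\geq 1$ from both summands, and recalling $\cfour=(1+2n^{1/2}\ka)^{N-1}M\ccountlatticepts(N)$ yields the asserted inequality.

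There is no serious obstacle here; the lemma is essentially a bookkeeping step. The two points that require care are: (i) the $\phi$-invariance of the error term, which rests on $\Vol$ and $\det\La$ picking up the \emph{same} factor $|\det\phi|$; and (ii) invoking Lemma \ref{MV_CL} with one constant $L$ playing the double role of Lipschitz constant of $\partial(\phi\Z)$ and circumradius of the ball enclosing $\phi\Z$. The choice $L=(1+2n^{1/2}\ka)\Rz$, which dominates both $2n^{1/2}\ka\Rz$ (from Lemma \ref{Lip-par}) and $\dS\Rz=n^{1/2}\Rz$ (the radius of the enclosing ball), does exactly this and reproduces the stated constant $\cfour$. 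I would expect the mildest friction to be purely notational — keeping the roles of $\Z$ versus $\phi\Z$ and of $\la_1(\La)$ versus $\la_1(\phi\La)$ straight — rather than anything mathematically delicate.
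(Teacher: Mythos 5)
Your proof is correct and follows essentially the same route as the paper: transform by $\phi$, use $\#(\Z\cap\La)=\#(\phi\Z\cap\phi\La)$ and ${\Vol \Z}/{\det \La}={\Vol \phi\Z}/{\det \phi\La}$, and then apply Lemma \ref{MV_CL} with the Lipschitz data supplied by Lemma \ref{Lip-par}. The only caveat is your assertion that $L=(1+2n^{1/2}\ka)\Rz$ dominates the enclosing radius $n^{1/2}\Rz$, which requires $1+2n^{1/2}\ka\geq n^{1/2}$ and thus is not automatic for small $\ka$; but exactly the same step is implicit in the paper's constant $\cfour$, so this is not a deviation from the paper's argument.
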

\begin{proof}
Since $\#(\Z\cap\La)=\#(\phi\Z\cap\phi\La)$
and ${\Vol \Z}/{\det \La}={\Vol \phi\Z}/{\det \phi\La}$ this follows immediately from Lemma \ref{MV_CL} and Lemma \ref{Lip-par}.
\end{proof}

\begin{lemma}\label{minimumestimate}
Let $\Ac>0$. Then we have
\begin{alignat*}3
\la_1(\phi\La)\geq \min\{\la_1(\La\cap \C_I), \nu(\La,\Ac), \thm\Ac\}.
\end{alignat*}
\end{lemma}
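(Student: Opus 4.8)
The plan is to bound $|\phi\vvx|$ from below for an arbitrary nonzero $\vvx \in \La$, splitting into two cases according to whether $\vvx$ lies in the coordinate subspace $\C = \C_I$ or not. If $\vvx \in \La \cap \C_I$, then since $\phi$ acts on each block $\vx_i$ by a positive scalar $\thi$, and $\thi \geq 1$ for all $i \notin I$ by \eqref{thetaorder}, we get $|\phi\vvx| \geq |\vvx| \geq \la_1(\La\cap\C_I)$; here one uses that the coordinates $\vx_i$ for $i \in I$ vanish, so only the blocks with $i \notin I$ contribute, and on those $\phi$ is expanding. That disposes of the first term in the minimum.

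For the main case $\vvx \in \La \backslash \C_I$, I would further split according to the size of $|\vvx|$ relative to the parameter $\Ac$. If $|\vvx| \geq \Ac$, then since $\phi$ scales each block by $\thi \geq \thm$ (note $\thm = \min_i \thi$, and this includes blocks with $i \in I$), we have $|\phi\vvx| \geq \thm |\vvx| \geq \thm \Ac$, giving the third term. If instead $|\vvx| < \Ac$, then by the definition of $\nu(\La,\Ac)$ as an infimum over $\La \backslash \C$ of points of norm $< \Ac$, we have $\Nm(\vvx)^{1/\t} \geq \nu(\La,\Ac)$, hence $\Nm(\vvx) \geq \nu(\La,\Ac)^\t$. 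Now I invoke \eqref{Nminv}, which says $\Nm(\phi\vvx) = \Nm(\vvx)$, and combine it with the elementary inequality relating the multiplicative $\vbeta$-norm to the Euclidean norm: for any $\vvz \in \E$, $\Nm(\vvz) = \prod_i |\vz_i|^{\beta_i} \leq \prod_i |\vvz|^{\beta_i} = |\vvz|^\t$. Applying this to $\vvz = \phi\vvx$ yields $|\phi\vvx|^\t \geq \Nm(\phi\vvx) = \Nm(\vvx) \geq \nu(\La,\Ac)^\t$, so $|\phi\vvx| \geq \nu(\La,\Ac)$, the second term.

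Taking the infimum over all nonzero $\vvx \in \La$ then gives $\la_1(\phi\La) \geq \min\{\la_1(\La\cap\C_I), \nu(\La,\Ac), \thm\Ac\}$, as claimed. I do not expect any serious obstacle here; the only mild subtlety is the bookkeeping of which blocks $\phi$ expands (all of them by at least $\thm$, and those outside $I$ by at least $1$), and making sure the case $|\vvx| < \Ac$ correctly lands in the domain of the infimum defining $\nu(\La,\Ac)$ — which it does precisely because $\vvx \notin \C$ and $|\vvx| < \Ac$. A degenerate point worth a sentence: if $\La \cap \C_I = \{\vv0\}$ then $\la_1(\La\cap\C_I) = \infty$ by the convention $\inf\emptyset = \infty$, and the first case is vacuous, which is consistent with the stated bound.
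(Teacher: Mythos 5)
Your proof is correct and follows essentially the same route as the paper: the same case split ($\vvx\in\La\cap\C_I$; $\vvx\in\La\backslash\C_I$ with $|\vvx|<\Ac$ or $|\vvx|\geq\Ac$), the same use of $\Nm(\phi\vvx)=\Nm(\vvx)$ and of the expansion bound $|\phi\vvx|\geq\thm|\vvx|$. The only (cosmetic) difference is that where the paper proves $|\vvz|\geq\Nm(\vvz)^{1/\t}$ by a weighted arithmetic--geometric mean inequality, you obtain it from the trivial blockwise bound $|\vz_i|\leq|\vvz|$, which is equally valid and slightly simpler.
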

\begin{proof}
By (\ref{thetaorder}) we have $\theta_i\geq 1$ (for all $i\notin I$).
Moreover, if $\vvx\in \La\cap \C_I$ then $\vx_i=\v0$ (for all $i\in I$), and thus
\begin{alignat*}3
|\phi(\vvx)|^2=\sum_{1\leq i\leq n\atop i\notin I}|\theta_i\vx_i|^2\geq \sum_{1\leq i\leq n\atop i\notin I}|\vx_i|^2=|\vvx|^2.
\end{alignat*}
Hence, if $\vvx\in \La\cap \C_I$ and $\vvx\neq 0$ then
$|\phi(\vvx)|\geq\la_1(\La\cap\C_I)$.

Now suppose that $\vvx\in \La\backslash \C_I$. If $\vvz$ is an arbitrary point in $\E$ then,
by the weighted arithmetic geometric mean inequality, we have  
\begin{alignat*}1
|\vvz|^2=\sum_{i=1}^{n}|\vz_i|^2\geq \frac{1}{\max_i \beta_i}\sum_{i=1}^{n}\beta_i|\vz_i|^2
\geq \frac{\t}{\max_i \beta_i}\left(\prod_{i=1}^{n}|\vz_i|^{2\beta_i}\right)^{\frac{1}{\t}}\geq\Nm(\vvz)^{2/\t},
\end{alignat*}
and thus 
\begin{alignat}3\label{normbound}
|\vvz|\geq  \Nm(\vvz)^{1/\t}.
\end{alignat}
Using  (\ref{normbound}) and (\ref{Nminv}) we conclude that
\begin{alignat*}3
|\phi(\vvx)|\geq \Nm(\phi\vvx)^{1/\t}=\Nm(\vvx)^{1/\t}.
\end{alignat*}
First suppose that $|\vvx|< \Ac$. Then we have by the definition of $\nu(\cdot,\cdot)$
\begin{alignat*}3
\Nm(\vvx)^{1/\t}\geq \nu(\La,\Ac),
\end{alignat*}
and hence $|\phi(\vvx)|\geq \nu(\La,\Ac)$. 
Now suppose $|\vvx|\geq\Ac$. Then we have
\begin{alignat*}3
|\phi(\vvx)|=\thm|(\tho\vx_1/\thm,\ldots,\thn\vx_n/\thm)|\geq \thm|(\vx_1,\ldots,\vx_n)|=\thm |\vvx|\geq \thm \Ac.
\end{alignat*}
This proves the lemma.
\end{proof}
We can now easily finish the proof of Theorem \ref{Thm1}. 
Since, $\thm \Qm=\Rz$ we conclude $\la_1(\phi\La)\geq \min\{\mu(\La,\Ac), \Ac\Rz/\Qm\}$. Thus, we have
\begin{alignat}3\label{errortermbound}
\frac{\Rz}{\la_1(\phi\La)}\leq\frac{\Rz}{\mu(\La,\Ac)}+\frac{\Qm}{\Ac}.
\end{alignat}
The latter in conjunction with Lemma \ref{Prop1} and the fact $\cfour+1=(1+2n^{1/2}\ka)^{N-1}MN^{3N^2/2}+1\leq M((1+\ka)N^{2N})^N=\cthree$ proves the theorem.

\section{Preparations for the M\"obius inversion}\label{1errorterm}

Recall that $\T:[0,\infty)\rightarrow [1,\infty)$ is a monotonic increasing function that is an upper bound for the divisor function, i.e.,  $\T(k)\geq \sum_{d|k}1$ for all $k\in \IN$. In this section $\Da$ is a positive integer. For $A\in \Aut_\Da(\IR)$ we write $\|A\|$ for the (Euclidean) operator norm.
\begin{lemma}\label{1*est}
Let $\Lambda$ be a lattice in $\IR^\Da$, and let $A$ be in $\Aut_\Da(\IR)$ with $A\IZ^\Da=\Lambda$.
Then
\begin{alignat*}3
\#\{k\in \IN; B_P(R)\backslash \{\vNull\}\cap k\Lambda\neq \emptyset\} \leq \T((R+|P|)\|A^{-1}\|)(2R\|A^{-1}\|+1).
\end{alignat*}
\end{lemma}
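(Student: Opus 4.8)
The plan is to bound the set of integers $k$ for which the dilated lattice $k\Lambda$ meets the punctured ball $B_P(R)\setminus\{\vNull\}$ in two steps: first control how large $k$ can be, then count the admissible $k$ by grouping them according to the lattice point they hit.

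First I would observe that if $B_P(R)\setminus\{\vNull\}\cap k\Lambda \neq \emptyset$, then there is a nonzero $\vvx \in k\Lambda$ with $|\vvx - P| \le R$, hence $|\vvx| \le R + |P|$. Writing $\vvx = k A \vz$ with $\vz \in \IZ^\Da\setminus\{\vNull\}$ (since $A\IZ^\Da = \Lambda$ so $k\Lambda = kA\IZ^\Da$), we get $|\vz| = |A^{-1}\vvx|/k \le \|A^{-1}\||\vvx|/k \le \|A^{-1}\|(R+|P|)/k$. Since $\vz$ is a nonzero integer vector, $|\vz| \ge 1$, which forces $k \le \|A^{-1}\|(R+|P|)$. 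So every admissible $k$ lies in $\{1, \ldots, \lfloor (R+|P|)\|A^{-1}\|\rfloor\}$.

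Next, for counting the admissible $k$, the key idea is that each such $k$ divides some integer vector: if $k\Lambda$ meets $B_P(R)\setminus\{\vNull\}$, pick $\vvx = kA\vz$ in that intersection; then $k \mid |\vz|$ in the sense that $k$ divides each coordinate of the integer vector $\vz$, or more precisely $k$ is a divisor of $\gcd$ of the coordinates, and in particular $k \mid g$ for some fixed nonzero integer... Actually the cleanest route: the vector $A^{-1}\vvx = k\vz$ is $k$ times an integer vector, and $|A^{-1}\vvx| \le \|A^{-1}\|(R+|P|)$, so $\vz$ ranges over integer vectors of norm at most $\|A^{-1}\|(R+|P|)/k$. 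Rather than bounding each $\vz$, I would group admissible $k$ by the value of $\vvx$ (equivalently $A^{-1}\vvx \in \IZ^\Da$): for a given nonzero integer vector $\vw = A^{-1}\vvx$, the integers $k$ with $\vw \in k\IZ^\Da$ are exactly the positive common divisors of the coordinates of $\vw$, and there are at most $\T(|\vw|_\infty) \le \T(\|A^{-1}\|(R+|P|))$ of them once we note $|\vw|_\infty \le |\vw| \le \|A^{-1}\|(R+|P|)$ and use monotonicity of $\T$ together with its divisor-function bound. But this counts per vector $\vw$; to finish I need to bound the number of distinct relevant $\vw$, i.e. the number of nonzero integer vectors in $A^{-1}(B_P(R))$, which is too many.

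So the better organization — and I expect this to be the main technical point — is to reverse the grouping: fix $k$, and instead of counting vectors, directly estimate $\#\{k : \text{admissible}\}$ by noting that the admissible $k$ are precisely those $k$ that divide $\gcd$ of coordinates of some integer vector $\vw$ with $|\vw| \le \|A^{-1}\|(R+|P|)$ lying appropriately near $A^{-1}P$; since a single line through the origin in the direction of $A^{-1}P$... I would actually argue: the set $A^{-1}(B_P(R))$ is a ball of radius $\le R\|A^{-1}\|$ centered at $A^{-1}P$ of norm $\le |P|\|A^{-1}\|$, and the nonzero integer points in it that are ``primitive-up-to-scaling'' near the ray through $A^{-1}P$ are few. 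The honest version: for each admissible $k$ choose the corresponding $\vw_k = A^{-1}\vvx_k \in k\IZ^\Da$; then $\vw_k / k \in \IZ^\Da$ and $|\vw_k/k| \le \|A^{-1}\|(R+|P|)/k \le \|A^{-1}\|(R+|P|)$. The map $k \mapsto \vw_k$ need not be injective, but distinct admissible $k_1, k_2$ with $\vw_{k_1} = \vw_{k_2} = \vw$ both divide all coordinates of $\vw$; the number of such divisors is $\le \T(|\vw|)$, and $|\vw| \le \|A^{-1}\|(R+|P|)$. Meanwhile the number of distinct values of $\vw$ arising is at most the number of integer points in a ball of radius $\|A^{-1}\|(R+|P|)$, still too big — so I must restrict further: in fact $\vw = A^{-1}\vvx$ with $\vvx \in B_P(R)$, and I can choose $\vvx_k$ to always be the one of smallest norm, forcing $\vw_k$ primitive (gcd of coordinates $=1$)? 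No. The resolution the author surely intends: bound the number of admissible $k$ by the number of integer points on a suitable segment. I would conclude by the estimate $\#\{\text{admissible } k\} \le \sum_{\vw} \T(|\vw|)$ where $\vw$ runs over primitive integer vectors such that some multiple lies in $A^{-1}(B_P(R))$; such $\vw$ correspond to at most one lattice ray meeting a ball, and a ball of radius $R\|A^{-1}\|$ around a center of norm $\le |P|\|A^{-1}\|$ meets at most $2R\|A^{-1}\| + 1$ rays through origin with a given... this gives the factor $(2R\|A^{-1}\| + 1)$, times $\T((R+|P|)\|A^{-1}\|)$ from the divisor count along each ray, exactly matching the claimed bound. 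The main obstacle is making the ``at most $2R\|A^{-1}\|+1$ primitive directions'' count precise — I would do it by noting that if two integer multiples $a\vw, b\vw$ (with $\vw$ primitive, $a<b$) both lie in $B_{A^{-1}P}(R\|A^{-1}\|)$ then so would all intermediate ones, and these are $(b-a)+1 \le 2R\|A^{-1}\|+1$ many distinct integer points along a line all within a ball of diameter $2R\|A^{-1}\|$, so each primitive direction contributes; and a cleaner bookkeeping just sums $\T$ over the at most $2R\|A^{-1}\|+1$ integer points on the relevant line through the origin nearest $A^{-1}P$. Assembling these gives $\#\{k\} \le \T((R+|P|)\|A^{-1}\|)(2R\|A^{-1}\|+1)$.
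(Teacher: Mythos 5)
Your reduction steps are fine and agree with the paper's: the bound $k\le (R+|P|)\|A^{-1}\|$, and the passage to $\mathbb{Z}^D$ via $A^{-1}$, using $A^{-1}B_P(R)\subset B_{A^{-1}P}(R\|A^{-1}\|)$ and $|A^{-1}P|\le\|A^{-1}\|\,|P|$. The genuine gap is in the counting step, and you flag it yourself (``too many'') without ever closing it. Grouping the admissible $k$ by the primitive direction of a chosen witness $w_k=A^{-1}x_k\in k\mathbb{Z}^D\setminus\{0\}$ gives the quantity $(2R\|A^{-1}\|+1)\cdot T\bigl((R+|P|)\|A^{-1}\|\bigr)$ only \emph{per direction}; to conclude you would have to sum over all primitive directions that actually occur, and nothing in your argument bounds their number. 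Your assertions that the relevant vectors ``correspond to at most one lattice ray meeting a ball'', or that the ball meets at most $2R\|A^{-1}\|+1$ rays, are not justified and are false in general: already for $P=0$, $A=I$ the ball $B_0(R)$ contains on the order of $R^D$ primitive directions, and distinct admissible $k$ may be witnessed only by pairwise non-proportional lattice vectors. So what your bookkeeping actually yields is the claimed bound multiplied by the number of directions used, which is precisely the quantity you never control.

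The missing idea --- and the paper's device --- is to project to a single coordinate rather than onto rays through the origin. After reducing to $\Lambda=\mathbb{Z}^D$ with radius $R'=R\|A^{-1}\|$ and centre $P'=A^{-1}P=(x_1',\dots,x_D')$, if $kv\in B_{P'}(R')$ with $v=(a_1,\dots,a_D)\neq 0$ and $a_i\neq 0$, then $ka_i$ is a nonzero integer divisible by $k$ lying in the single interval $[x_i'-R',\,x_i'+R']$, which contains at most $2R'+1$ integers, each of modulus at most $R'+|P'|\le (R+|P|)\|A^{-1}\|$; hence every admissible $k$ is a divisor of a nonzero member of one fixed list of at most $2R\|A^{-1}\|+1$ integers of size at most $(R+|P|)\|A^{-1}\|$, which gives the product bound directly. (The coordinate $i$ may depend on $k$, but this is harmless: if some coordinate has $|x_i'|>R'$, then that interval misses $0$ and works uniformly for all admissible $k$; if instead $|x_i'|\le R'$ for all $i$, then any admissible $k$ satisfies $k\le|ka_i|\le 2R'$, so there are at most $2R'+1$ of them anyway.) It is this one-dimensional projection that produces a single short list of integers catching every admissible $k$; the ray-by-ray count cannot play that role.
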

\begin{proof}
First assume $A=I_\Da$ so that $\Lambda=\IZ^\Da$. Suppose $v=(a_1,\ldots,a_\Da)\in \IZ^\Da$ is non-zero, $kv\in B_P(R)$ and $P=(x_1,\ldots,x_\Da)$. 
Then $ka_i$ lies in $[x_i-R,x_i+R]$ for $1\leq i\leq \Da$. As $v\neq \vNull$ there exists an $i$ with $a_i\neq 0$.
We conclude that $k$ is a divisor of some non-zero integer in $[x_i-R,x_i+R]$. There are at most $2R+1$ integers in this interval, each of which of modulus
at most $R+|P|$. Hence the number of possibilities for $k$ is
$\leq \T(R+|P|)(2R+1)$. This proves the lemma for $A=I_\Da$.
Next note that 
$$\#(B_P(R)\backslash \{\vNull\}\cap k\La)=\#(A^{-1}B_P(R)\backslash \{\vNull\}\cap k\IZ^\Da).$$
Hence, the general case follows from the case $A=I_\Da$ upon noticing 
$A^{-1}B_P(R)\subset B_{A^{-1}(P)}(R\|A^{-1}\|)$, 
and $|A^{-1}(P)|\leq\|A^{-1}\||P|$.
\end{proof}

Next we estimate the operator norm $\|A^{-1}\|$ for a suitable choice of $A$.
\begin{lemma}\label{Operatornormest}
Let $\Lambda$ be a lattice in $\IR^\Da$.
There exists $A\in \Aut_\Da(\IR)$ with $A\IZ^\Da=\Lambda$ and
\begin{alignat*}3
\|A^{-1}\|\leq \frac{\copnorm(\Da)}{\lambda_1},
\end{alignat*}
where $\copnorm(\Da)=\Da^{2\Da+1}$.
\end{lemma}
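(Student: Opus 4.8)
The plan is to produce a basis of $\Lambda$ consisting of vectors of controlled length, take $A$ to be the matrix whose columns are these basis vectors, and then bound $\|A^{-1}\|$ by bounding the entries of $A^{-1}$ via Cramer's rule. First I would invoke a standard fact from the geometry of numbers: there exists a basis $b_1,\ldots,b_\Da$ of $\Lambda$ with $|b_i| \leq i \,\lambda_i(\Lambda)$ (or with any comparable bound $|b_i| \ll_\Da \lambda_i$; the precise constant is immaterial). Let $A$ be the matrix with columns $b_1,\ldots,b_\Da$, so $A\IZ^\Da = \Lambda$ and $A \in \Aut_\Da(\IR)$.

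Next I would estimate $\|A^{-1}\|$. Since the Euclidean operator norm is at most $\Da$ times the maximal absolute value of an entry (indeed $\|B\| \le \Da \max_{i,j}|B_{ij}|$ for a $\Da \times \Da$ matrix $B$), it suffices to bound the entries of $A^{-1}$. By Cramer's rule, each entry of $A^{-1}$ is, up to sign, a $(\Da-1)\times(\Da-1)$ minor of $A$ divided by $\det A$. Each such minor is bounded, via Hadamard's inequality, by the product of the Euclidean norms of $\Da-1$ of the columns $b_j$, hence by $\prod_{i=1}^{\Da}(i\,\lambda_i) / \lambda_{\min} \le \Da!\, \prod_{i=1}^\Da \lambda_i / \lambda_1$. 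Meanwhile $|\det A| = \det \Lambda \ge c(\Da)\,\lambda_1 \cdots \lambda_\Da$ by Minkowski's second theorem (with $c(\Da) = 2^\Da/\Vol(B_0(1)) \ge$ an explicit positive constant, e.g. $\ge \Da!/2^\Da$ or a cruder bound). Dividing, the $\lambda_2 \cdots \lambda_\Da$ factors cancel and one is left with a bound of the shape $C(\Da)/\lambda_1$ for the entries of $A^{-1}$, hence $\|A^{-1}\| \le \Da\, C(\Da)/\lambda_1$.

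The only remaining task is bookkeeping on the constants to check that everything fits under $\copnorm(\Da) = \Da^{2\Da+1}$. Tracking the factors: $\Da$ from the operator-norm-vs-entries bound, $\Da!$ from Hadamard applied to the minors, a factor like $\Da!$ or $2^\Da$ of reciprocals from the Minkowski lower bound on the determinant, and the combinatorial factor $\prod i \le \Da!$ from the basis-length bound — all of these are crudely dominated by powers of $\Da$ summing well below the exponent $2\Da+1$, with room to spare, so no delicate optimization is needed. The main (very mild) obstacle is simply being careful that the $\lambda_2,\dots,\lambda_\Da$ dependence genuinely cancels — this is exactly why one must use the lower bound on $\det\Lambda$ from Minkowski's second theorem rather than just $\det\Lambda \ge \lambda_1^\Da$, and why one pairs each minor (which omits one column, the worst case being omission of the longest column $b_\Da$) against the full determinant. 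Once that cancellation is in place the estimate is routine.
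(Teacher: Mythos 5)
Your proposal is correct and follows essentially the same route as the paper: pick a well-reduced basis of $\Lambda$, let $A$ have these vectors as columns, bound the entries of $A^{-1}$ via Cramer's rule plus Hadamard's inequality, and pass from the maximal entry to the operator norm, and the bookkeeping does fit under $\copnorm(\Da)=\Da^{2\Da+1}$ (indeed $\Da\cdot(\Da!)^2\le \Da^{2\Da+1}$ after using $\Vol(B_0(1))\ge 2^\Da/\Da!$). The only difference is the auxiliary input: the paper cites a basis $v_1,\ldots,v_\Da$ with orthogonality defect $|v_1|\cdots|v_\Da|/\det\Lambda\le \Da^{2\Da}$ and uses $|v_j|\ge\lambda_1$, while you get the same cancellation from $|b_i|\le i\lambda_i$ together with Minkowski's second theorem; just note that your Minkowski constant is stated upside down (the correct lower bound is $\det\Lambda\ge 2^{-\Da}\Vol(B_0(1))\lambda_1\cdots\lambda_\Da$) and that the worst minor is the one omitting the shortest, not the longest, column, neither of which affects the argument.
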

\begin{proof}
Any lattice $\Lambda$ in $\IR^\Da$ has a basis $v_1,\ldots,v_\Da$ with $\frac{|v_1|\cdots|v_\Da|}{|\det[v_1\ldots v_\Da]|}\leq \Da^{2\Da}$, see, e.g., \cite[Lemma 4.4]{art1}.
Let $A$ be the matrix that sends the canonical basis $e_1,\ldots,e_n$ to $v_1,\ldots,v_n$. Now suppose $A^{-1}$ sends
$e_i$ to $(\rho_1,\ldots,\rho_n)$ then by Cramer's rule
\begin{alignat*}3
|\rho_j|=&\left|\frac{\det[v_1\ldots e_i\ldots v_\Da]}
{\det[v_1\ldots v_j\ldots v_\Da]}\right|\leq\frac{|\det[v_1\ldots e_i\ldots v_\Da]|}
{|v_1|\cdots|v_j|\cdots|v_\Da|}\Da^{2\Da}.
\end{alignat*}
Now we apply Hadamard's inequality to obtain 
\begin{alignat*}3
\frac{|\det[v_1\ldots e_i\ldots v_\Da]|}
{|v_1|\cdots|v_j|\cdots|v_\Da|}
\leq \frac{|v_1|\cdots|e_j|\cdots|v_\Da|}{|v_1|\cdots|v_i|\cdots|v_\Da|}
=\frac{1}{|v_i|}\leq \frac{1}{\lambda_1}.
\end{alignat*}
Next we use that for a $\Da\times \Da$ matrix $[a_{ij}]$ with real entries we have $\|[a_{ij}]\|\leq \sqrt{\Da}\max_{ij}|a_{ij}|$, and this proves the lemma.
\end{proof}
We combine the previous two lemmas. 
\begin{lemma}\label{mainlemma}
Let $\Lambda$ be a lattice in $\IR^\Da$, and let $\lambda_1=\lambda_1(\Lambda)$.
Then 
\begin{alignat*}3
\sum_{k=1}^{\infty}1^*(B_P(R)\backslash\{\vNull\}\cap k\Lambda )\leq \T\left(\copnorm(\Da)\left(\frac{R+|P|}{\lambda_1}\right)\right)\left(\frac{2\copnorm(\Da) R}{\lambda_1}+1\right).
\end{alignat*} 
\end{lemma}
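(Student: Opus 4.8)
The plan is to simply chain together Lemma \ref{1*est} and Lemma \ref{Operatornormest}; no new ideas are needed. First I would observe that each summand $1^*(B_P(R)\backslash\{\vNull\}\cap k\Lambda)$ equals $1$ exactly when $B_P(R)\backslash\{\vNull\}\cap k\Lambda\neq\emptyset$ and $0$ otherwise, so that
\[
\sum_{k=1}^{\infty}1^*(B_P(R)\backslash\{\vNull\}\cap k\Lambda)=\#\{k\in\IN;\ B_P(R)\backslash\{\vNull\}\cap k\Lambda\neq\emptyset\}.
\]
(This count is actually finite, since $kv\in B_P(R)$ with $v\in\Lambda\backslash\{\vNull\}$ forces $k|v|\leq |P|+R$ and hence $k\leq (|P|+R)/\lambda_1$, but this remark is not logically needed.)

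Next I would invoke Lemma \ref{Operatornormest} to fix a matrix $A\in\Aut_\Da(\IR)$ with $A\IZ^\Da=\Lambda$ and $\|A^{-1}\|\leq\copnorm(\Da)/\lambda_1$. Feeding this particular $A$ into Lemma \ref{1*est} yields
\[
\sum_{k=1}^{\infty}1^*(B_P(R)\backslash\{\vNull\}\cap k\Lambda)\leq \T\big((R+|P|)\|A^{-1}\|\big)\big(2R\|A^{-1}\|+1\big),
\]
and then I would finish by substituting the bound $\|A^{-1}\|\leq\copnorm(\Da)/\lambda_1$ into both factors on the right.

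The only point requiring any care is that this last substitution enlarges the right-hand side, which needs the factors $x\mapsto\T\big((R+|P|)x\big)$ and $x\mapsto 2Rx+1$ to be nondecreasing for $x\geq 0$; the latter is obvious and the former holds because $\T$ is assumed monotonic increasing. So the monotonicity hypothesis on $\T$ is exactly what makes the argument go through, and there is no genuine obstacle beyond bookkeeping.
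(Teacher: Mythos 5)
Your proposal is correct and follows the same route as the paper: rewrite the sum as the count of $k$ with $B_P(R)\backslash\{\vNull\}\cap k\Lambda\neq\emptyset$, pick $A$ via Lemma \ref{Operatornormest}, and apply Lemma \ref{1*est}, using the monotonicity of $\T$ to substitute the operator-norm bound. The paper compresses exactly this into two lines, so your write-up just makes the bookkeeping explicit.
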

\begin{proof}
Note that $\sum_{k=1}^{\infty}1^*(B_P(R)\backslash\{\vNull\}\cap k\Lambda)=\#\{k\in \IN;  B_P(R)\backslash\{\vNull\}\cap k\Lambda\neq \emptyset\}$. 
Hence, the lemma follows immediately from Lemma \ref{1*est} and Lemma \ref{Operatornormest}.
\end{proof}

\section{Proof of Theorem \ref{Thm2}}\label{proofthm2}

Set
\begin{alignat*}3
\Z^*:=\Z\backslash\{\vv0\},
\end{alignat*}
and
\begin{alignat*}3
\R:=\dS\Rz.
\end{alignat*}
\begin{lemma}\label{Prop2}
We have
\begin{alignat*}3
\left|\#(\Z^*\cap\La)-\frac{\Vol \Z}{\det \La}\right|
\leq \cfive \left(\left(\frac{\Rz}{\la_1(\phi\La)}\right)^{N-1}+1^*(B_{\phi(\vvy)}(\R)\backslash\{\vv0\}\cap\phi\La)\right),
\end{alignat*}
where $\cfive=(1+2n^{1/2}\ka)^{N-1}(M+1)\ccountlatticepts(N)$.
\end{lemma}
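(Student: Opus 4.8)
The plan is to follow the proof of Lemma \ref{Prop1} essentially verbatim: transport the problem by the automorphism $\phi$ and apply the basic counting principle Lemma \ref{MV_CL} to the set $\phi\Z^*$ and the lattice $\phi\La$ in $\IR^N$. The only features beyond Lemma \ref{Prop1} are two bookkeeping changes forced by having deleted the origin from $\Z$, and these are exactly what make $M$ become $M+1$ in $\cfive$ and what make the indicator term be taken over the ball $B_{\phi(\vvy)}(\R)$ rather than over $\phi\Z^*$.

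First I would observe that, since $\phi\in\Aut_N(\IR)$, we have $\#(\Z^*\cap\La)=\#(\phi\Z^*\cap\phi\La)$, and, the origin being a null set, $\Vol(\phi\Z^*)=|\det\phi|\,\Vol\Z$ while $\det(\phi\La)=|\det\phi|\det\La$, so $\Vol(\phi\Z^*)/\det(\phi\La)=\Vol\Z/\det\La$. Hence it suffices to bound $\bigl|\#(\phi\Z^*\cap\phi\La)-\Vol(\phi\Z^*)/\det(\phi\La)\bigr|$ by Lemma \ref{MV_CL}, which simultaneously yields measurability of $\Z^*$. To check the hypotheses of that lemma: $\phi$ is a homeomorphism, so $\partial(\phi\Z^*)=\phi(\partial\Z^*)$, and $\partial\Z^*\subseteq\partial\Z\cup\{\vv0\}$, since deleting the single point $\vv0$ from $\Z$ can enlarge the topological boundary only by $\vv0$ itself. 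By Lemma \ref{Lip-par}, $\partial(\phi\Z)$ is covered by $M$ maps $[0,1]^{N-1}\to\IR^N$ with Lipschitz constant $L=2n^{1/2}\ka\Rz$; adjoining the constant map with value $\vv0=\phi(\vv0)$, which satisfies the Lipschitz condition with constant $L$ trivially, shows $\partial(\phi\Z^*)\in\text{Lip}(N,M+1,L)$. Moreover $\phi\Z^*\subseteq\phi\Z\subseteq B_{\phi(\vvy)}(\dS\Rz)=B_{\phi(\vvy)}(\R)$ as in the proof of Lemma \ref{Lip-par}, and since $\R\le L$ we get $\phi\Z^*\subseteq B_{\phi(\vvy)}(L)$.

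Applying Lemma \ref{MV_CL} then gives
$$\left|\#(\phi\Z^*\cap\phi\La)-\frac{\Vol(\phi\Z^*)}{\det(\phi\La)}\right|\le\ccountlatticepts(N)(M+1)\left(\left(\frac{L}{\la_1(\phi\La)}\right)^{N-1}+1^*(\phi\Z^*\cap\phi\La)\right).$$
To conclude I would use $\bigl(L/\la_1(\phi\La)\bigr)^{N-1}=(2n^{1/2}\ka)^{N-1}\bigl(\Rz/\la_1(\phi\La)\bigr)^{N-1}\le(1+2n^{1/2}\ka)^{N-1}\bigl(\Rz/\la_1(\phi\La)\bigr)^{N-1}$, together with the fact that $\phi\Z^*\subseteq B_{\phi(\vvy)}(\R)\backslash\{\vv0\}$, which forces $1^*(\phi\Z^*\cap\phi\La)\le 1^*\bigl(B_{\phi(\vvy)}(\R)\backslash\{\vv0\}\cap\phi\La\bigr)$; enlarging both coefficients to the common value $\cfive=(1+2n^{1/2}\ka)^{N-1}(M+1)\ccountlatticepts(N)$, which is legitimate because $(1+2n^{1/2}\ka)^{N-1}\ge1$, yields the asserted inequality.

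I do not expect a genuine obstacle: the argument is parallel to that of Lemma \ref{Prop1}. The only point that deserves care is the elementary boundary inclusion $\partial\Z^*\subseteq\partial\Z\cup\{\vv0\}$ and the observation that one additional constant map covers the extra point, which is precisely what the increment $M\mapsto M+1$ in $\cfive$ records.
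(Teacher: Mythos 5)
Your argument is correct and follows the paper's own route essentially verbatim: you invoke Lemma \ref{Lip-par}, add one extra (constant) Lipschitz map to cover the possible boundary point $\vv0$ (whence $M+1$), note $\phi\Z^*\subseteq B_{\phi(\vvy)}(\R)\backslash\{\vv0\}$, and conclude via Lemma \ref{MV_CL} exactly as in Lemma \ref{Prop1}. The constant bookkeeping and the indicator comparison are handled as the paper intends, so there is nothing to add.
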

\begin{proof}
Lemma \ref{Lip-par} implies that $\partial \Z^*\in$ Lip$(N,M+1,L)$ with $L=2n^{1/2}\ka\Rz$.
As noted in the proof of the latter lemma we have $\phi(\Z^*)\subset B_{\phi\vvy}(\R)\backslash\{\vv0\}$.
We conclude as in Lemma \ref{Prop1}.
\end{proof}

For $\vvx\in \La\backslash\{\vv0\}$ we define $\gcd(\vvx):=d$ if $\vvx=d\vvx'$ for some $\vvx'\in \La$ but $\vvx\neq k\vvx'$ for all integers $k>d$ and all $\vvx'\in\La$.
(An equivalent definition
is $\gcd(A\vvz):=\gcd(\vvz)$, where $\vvz\in \IZ^N$, $\gcd(\vvz):=\gcd(z_1,\ldots,z_N)$, and $\La=A\IZ^N$.)
Next we define
\begin{alignat*}1
\Fd=\{\vvx\in \La\cap\Z^*;  \gcd(\vvx)=d\}.
\end{alignat*}
In particular, $\La^*\cap\Z=\Fo$.
Then for $k\in \IN$ we have the disjoint union
\begin{alignat*}1
\bigcup_{k\mid d}\Fd=k\La\cap\Z^*.
\end{alignat*}
If $\vvx=k\vvx'$ lies in $k\La\cap\Z^*$ then $k\phi\vvx'$ lies in $k\phi\La\cap B_{\phi(\vvy)}(\R)$, and hence 
$$k\leq \frac{\R+|\phi(\vvy)|}{\la_1(\phi\La)} \leq \frac{\R+|\phi(\vvy)|}{\mul} + \frac{\R+|\phi(\vvy)|}{\Rz}=:\Ab,$$
where for the second inequality we have applied Lemma \ref{minimumestimate}.
We use the M\"obius function $\mu(\cdot)$ and the M\"obius inversion formula to get
\begin{alignat*}1
\#(\La^*\cap\Z)=\#\Fo=\sum_{k=1}^{\infty}\mu(k)\sum_{d\atop k|d}\#\Fd=\sum_{k=1}^{[\Rt]}\mu(k)\sum_{d\atop k|d}\#\Fd=\sum_{k=1}^{[\Rt]}\mu(k)\#(k\La\cap\Z^*).
\end{alignat*}
For the rest of this section we will write $g\ll h$ to mean there exists a constant $c=c(N,M,\ka)$ such that $g\leq ch$. 
Applying Lemma \ref{Prop2} with $\La$
replaced by $k\La$ yields
\begin{alignat*}3
&\left|\#(\Z\cap\La^*)-\frac{\Vol \Z}{\zeta(N)\det \La}\right|
\ll\\
&\sum_{k=1}^{[\Rt]}\left(\frac{\Rz}{k\la_1(\phi\La)}\right)^{N-1}+\sum_{k=1}^{[\Rt]} 1^*(B_{\phi(\vvy)}(\R)\backslash\{\vv0\}\cap k\phi\La)+\sum_{k>\Rt}\frac{\Vol \Z}{k^N\det \La}.
\end{alignat*}
First we note that 
\begin{alignat*}3
\sum_{k>\Rt}k^{-N}\leq \sum_{k\geq\max\{\Rt,1\}}k^{-N}\ll \max\{\Rt,1\}^{1-N}\leq \max\{\frac{\R}{\la_1(\phi\La)},1\}^{1-N},
\end{alignat*}
and moreover, 
\begin{alignat*}3
\frac{\Vol \Z}{\det \La}=
\frac{\Vol \phi\Z}{\det \phi\La}\leq
\frac{\Vol B_{\vv0}(\R)}{\det \phi\La}\ll
\frac{\R^N}{\la_1(\phi\La)^N}. 
\end{alignat*}
Combining both with (\ref{errortermbound}) yields
\begin{alignat*}3
\sum_{k>\Rt}\frac{\Vol \Z}{k^N\det \La}\ll \frac{\R}{\la_1(\phi\La)}\ll \frac{\Rz}{\la_1(\phi\La)} 
\leq \frac{\Rz}{\mul}+1.
\end{alignat*}
Next we note that by Lemma \ref{mainlemma}
\begin{alignat*}3
\sum_{k=1}^{[\Rt]}1^*(B_{\phi(\vvy)}(\R)\backslash\{\vv0\}\cap k\phi\La)\leq 
\T\left(\copnorm(N)\frac{\R+|\phi(\vvy)|}{\la_1(\phi(\La))} \right)\left(\frac{2\copnorm(N)\R}{\la_1(\phi(\La))}+1\right).
\end{alignat*}
Moreover,
\begin{alignat*}3
\left(\frac{2\copnorm(N)\R}{\la_1(\phi(\La))}+1\right)\ll \frac{\Rz}{\mul}+1,
\end{alignat*}
and 
\begin{alignat*}3
\frac{\R+|\phi(\vvy)|}{\la_1(\phi(\La))} \leq \frac{\R+|\phi(\vvy)|}{\mul} + \frac{\R+|\phi(\vvy)|}{\Rz}=\Ab.
\end{alignat*}
Since $\copnorm(N)\Ab<\Ad$ we conclude that
\begin{alignat*}3
\sum_{k=1}^{[\Rt]}1^*(B_{\phi(\vvy)}(\R)\backslash\{\vv0\}\cap k\phi\La)\ll
\T\left(\Ad\right)\left(\frac{\Rz}{\mul}+1\right).
\end{alignat*}

Finally, \begin{alignat*}3
\sum_{k=1}^{[\Rt]}\left(\frac{\Rz}{k\la_1(\phi\La)}\right)^{N-1}\ll \left(\frac{\Rz}{\mul}+1\right)^{N-1}\sum_{k=1}^{[\Rt]}k^{1-N}\ll \left(\frac{\Rz}{\mul}+1\right)^{N-1}\L^*,
\end{alignat*}
where
\begin{equation*}
\L^*= 
\begin{cases}  \max\{\log(\Ab),1\} & \text{if $N=2$,}
\\
1&\text{if $N>2$.}
\end{cases}
\end{equation*}
If $N>2$ then $\L^*=1$ and we are done. So  suppose $N=2$. Hence $\copnorm(N)=32$.
By assumption $\T(x)\geq 1$ so that $\L^*\leq T(\copnorm(N)\Ab)$ for $\Ab\leq\exp(1)$. Now suppose $\Ab>\exp(1)$.
Since $T$ is monotonic and $2^{[\log_2[32 \Ab]]}\leq 32\Ab$ we have $\T(32\Ab)\geq [\log_2[32\Ab]]+1\geq \log_2(32\Ab-1)\geq \log\Ab$. 
Thus, $\L^*\leq T(\copnorm(N)\Ab)\leq T(\Ad)$. This finishes the proof.

\section{Lower bounds for the error term}\label{lowerboundserrorterm}
The main goal of this section is to prove Theorem \ref{sharp}. 
Throughout this section we assume that 
$\m_i=\beta_i=1$ ($1\leq i\leq n$), so that $N=n=\t\geq 2$, and that  $\La$ is a unimodular weakly admissible for $(\S,\C)$
but not admissible for $(\S,\C)$.
To simplify the notation we write $\Nmm(\cdot):=\Nm(\cdot)$ and $\nu(\cdot):=\nu(\La,\cdot)$.

Let $\k\geq 1$ be a constant, and $\{\vvx_j\}_{j=1}^\infty=\{(x_{j1},\ldots,x_{jn})\}_{j=1}^\infty$ be a sequence of pairwise distinct elements in $\La\backslash\C$ 
satisfying
\begin{alignat*}1
\Nmm(\vvx_j)\leq \k\nu(|\vvx_j|)^n.
\end{alignat*}
We define 
\begin{alignat*}1
N_j&:=a\nu(|\vvx_j|)^{-n},\\
\Zj&:=N_j\Cx,\\
c_j&:={\la_{n-1}(\La,\Cx)},
\end{alignat*}
where $a>0$ is a constant which will be specified later, $\Cx$ denotes the $\vv0$-centered box 
$$\Cx:=[-|x_{j1}|,|x_{j1}|]\times\cdots\times[-|x_{jn}|,|x_{jn}|],$$ 
and $\la_{i}(\La,\Cx)$ are
the corresponding successive minima.
For $1\leq i\leq n$ we choose the minimal eligible values $Q_i=N_j|x_{ji}|$ for the set $\Zj$, so that\footnote{To simplify the notation we suppress the dependence on $j$ and we simply write $Q_i$ and $\Rz$.} 
\begin{alignat}1\label{Rzest}
\Rz\leq (a\k)^{\frac{1}{n}} N_j^{\frac{n-1}{n}}.
\end{alignat}
We also assume that our sets $\Zj$ satisfy the condition  (\ref{Qorder}), i.e.,
\begin{alignat*}1
\Qi\leq \Rz \;(\text{for all } i\notin I).
\end{alignat*}
\begin{lemma}\label{lemmasixone}
We have 
\begin{alignat*}1
\#(\Zj\cap\La)-\Vol\Zj\geq (N_j/(c_jn))^{n-1}-2^na\k N_j^{n-1}.
\end{alignat*}
Moreover, $N_j$ tends to infinity and $\Rz/\Qm$ tends to zero.
\end{lemma}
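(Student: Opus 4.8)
The plan is to estimate the two pieces of $\#(\Zj\cap\La)-\Vol\Zj$ separately, so that they line up with the two terms on the right: I will show $\Vol\Zj\le 2^n a\k N_j^{n-1}$ and $\#(\Zj\cap\La)\ge (N_j/(c_jn))^{n-1}$, and then subtract. The volume bound is a one-line computation: $\Cx$ is the symmetric box with side lengths $2|x_{j1}|,\dots,2|x_{jn}|$, so $\Vol\Zj=N_j^n\Vol\Cx=2^nN_j^n\prod_{i}|x_{ji}|=2^nN_j^n\Nmm(\vvx_j)$; substituting the hypothesis $\Nmm(\vvx_j)\le\k\nu(|\vvx_j|)^n$ together with $\nu(|\vvx_j|)^n=a/N_j$ (a rearrangement of $N_j=a\nu(|\vvx_j|)^{-n}$) gives $\Vol\Zj\le 2^n a\k N_j^{n-1}$.

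The substance is the lower bound for the count, and this is where the minimum $c_j=\la_{n-1}(\La,\Cx)$ is used. I would first observe that, because $\La$ is weakly admissible, every $\vvx\in\La\setminus\C$ has all coordinates nonzero (otherwise $\Nmm(\vvx)=0$ would force $\nu(\La,|\vvx|+1)=0$), so $\Cx$ is a genuine nondegenerate symmetric box and $\la_{n-1}(\La,\Cx)$ is finite, positive, and attained: there are linearly independent lattice vectors $v_1,\dots,v_{n-1}\in c_j\Cx$. For integers $a_1,\dots,a_{n-1}$ with $\sum_i|a_i|\le N_j/c_j$ the vector $\sum_i a_iv_i$ lies in $\La$ and, coordinate by coordinate, in $N_j\Cx=\Zj$, since the $k$-th coordinate of each $v_i$ has modulus at most $c_j|x_{jk}|$, whence that of $\sum_i a_iv_i$ is at most $c_j|x_{jk}|\sum_i|a_i|\le N_j|x_{jk}|$. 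Distinct tuples give distinct vectors by linear independence, so $\#(\Zj\cap\La)$ is at least the number of integer tuples with $|a_i|\le N_j/(c_j(n-1))$, which is $\ge (N_j/(c_j(n-1)))^{n-1}\ge(N_j/(c_jn))^{n-1}$. Subtracting the volume bound yields the displayed inequality.

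For the two limit claims: the $\vvx_j$ are pairwise distinct points of a lattice, hence $|\vvx_j|\to\infty$; since $\La$ is not admissible, $\nu(|\vvx_j|)=\nu(\La,|\vvx_j|)\to\Nmm(\La)=0$, and therefore $N_j=a\nu(|\vvx_j|)^{-n}\to\infty$ (in particular $N_j$ is finite for all large $j$). For $\Rz/\Qm\to 0$ I combine (\ref{Rzest}), i.e. $\Rz\le(a\k)^{1/n}N_j^{(n-1)/n}$, with $\Qm=N_j\max_i|x_{ji}|\ge N_j|\vvx_j|/\sqrt{n}\ge N_j\la_1(\La)/\sqrt{n}$, so that $\Rz/\Qm\le(a\k)^{1/n}\sqrt{n}/(\la_1(\La)\,N_j^{1/n})$, which tends to $0$.

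The one place that needs an idea is the lower bound for $\#(\Zj\cap\La)$: one has to recognise that the relevant quantity is the $(n-1)$-st successive minimum of $\La$ \emph{relative to the box $\Cx$} (not relative to the Euclidean ball), so that the $n-1$ short vectors it supplies span a thin $(n-1)$-dimensional ``slab'' of roughly $(N_j/c_j)^{n-1}$ lattice points sitting entirely inside the dilate $\Zj$. This surplus of lattice points over volume is exactly the mechanism behind Theorem \ref{sharp}; the remaining steps are routine bookkeeping.
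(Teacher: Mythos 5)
Your proposal is correct and follows essentially the same route as the paper: the grid $\sum_i a_i v_i$ built from $n-1$ linearly independent lattice vectors in $c_j\Cx$ gives the lower bound $(N_j/(c_jn))^{n-1}$ for $\#(\Zj\cap\La)$, the identity $\Vol\Zj=2^nN_j^n\Nmm(\vvx_j)\leq 2^na\k N_j^{n-1}$ handles the volume, and the limit claims follow from non-admissibility exactly as in the paper. Your additional remarks (nondegeneracy of $\Cx$ via weak admissibility, and the explicit bound $\Qm\geq N_j\la_1(\La)/\sqrt{n}$ for $\Rz/\Qm\to0$) merely spell out details the paper leaves implicit.
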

\begin{proof}
Let $v_1,\ldots,v_{n-1}$ be linearly independent lattice points in $\la_{n-1}(\La,\Cx)\Cx$. Then the lattice points
$\sum_{l=1}^{n-1}m_lv_l$ with $-N_j/(c_j n)\leq m_l \leq N_j/(c_j n)$ are all distinct and lie all in $\Zj$. Since $2[N_j/(c_jn)]+1\geq N_j/(c_jn)$ the claimed inequality follows at once.
Recall that  $\La$ is not admissible, and hence $N_j$ tends to infinity, and thus $\Rz/\Qm$ tends to zero. 
\end{proof}
We now make the crucial assumption 
that the $n-1$-th successive minimum $c_j$
is uniformly bounded\footnote{Note that $\la_1(\La,\Cx)\leq 1$ by definition of the box $\Cx$. On the other hand $\Vol\Cx$ tends to zero, so that by Minkowski's second Theorem
$\la_n(\La,\Cx)\rightarrow \infty$ as $j$ tends to infinity.} in $j$.
\begin{lemma}
Suppose there exists a constant $c_\La\geq 1$ such that
\begin{alignat}2
\label{succminbound}
c_j\leq c_\La
\end{alignat}
for all $j$, and take $a:=1/(4\k(2c_\La n)^{n-1})$. Then we have 
\begin{alignat}1\label{LEZbound}
\LEZi\geq \#(\Zj\cap\La)-\Vol\Zj\geq (c_\La n)^{-n}N_j^{n-1}.
\end{alignat}
\end{lemma}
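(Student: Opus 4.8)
The plan is to combine the two parts of the previous lemma with the specific choice of $a$. From Lemma \ref{lemmasixone} we have the inequality
\begin{alignat*}1
\#(\Zj\cap\La)-\Vol\Zj\geq \left(\frac{N_j}{c_jn}\right)^{n-1}-2^na\k N_j^{n-1},
\end{alignat*}
and since $\LEZi=|\#(\Zj\cap\La)-\Vol\Zj|\geq \#(\Zj\cap\La)-\Vol\Zj$ trivially (the absolute value dominates the quantity inside), the first inequality in (\ref{LEZbound}) is immediate. So the whole task reduces to showing the second inequality, i.e., that the right-hand side above is at least $(c_\La n)^{-n}N_j^{n-1}$.

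First I would use the uniform bound (\ref{succminbound}), $c_j\leq c_\La$, to replace the first term: $(N_j/(c_jn))^{n-1}\geq (N_j/(c_\La n))^{n-1}=(c_\La n)^{-(n-1)}N_j^{n-1}$. Then the difference is at least $\left((c_\La n)^{-(n-1)}-2^na\k\right)N_j^{n-1}$. Now I plug in $a=1/(4\k(2c_\La n)^{n-1})$, so that $2^na\k=2^n/(4(2c_\La n)^{n-1})=2^{n}/(4\cdot 2^{n-1}(c_\La n)^{n-1})=2^{n}/(2^{n+1}(c_\La n)^{n-1})=1/(2(c_\La n)^{n-1})$. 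Hence the coefficient of $N_j^{n-1}$ is at least $(c_\La n)^{-(n-1)}-\tfrac12(c_\La n)^{-(n-1)}=\tfrac12(c_\La n)^{-(n-1)}$. Finally, since $c_\La\geq 1$ and $n\geq 2$ we have $c_\La n\geq 2$, so $\tfrac12(c_\La n)^{-(n-1)}\geq (c_\La n)^{-1}(c_\La n)^{-(n-1)}=(c_\La n)^{-n}$, giving the desired bound $\LEZi\geq (c_\La n)^{-n}N_j^{n-1}$.

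I do not expect any real obstacle here: this is a routine arithmetic verification once the value of $a$ is substituted, and the only mild point is keeping track of the powers of $2$ in the computation $2^na\k=\tfrac12(c_\La n)^{-(n-1)}$ and then using $c_\La n\geq 2$ to absorb the factor $\tfrac12$ into one more power of $(c_\La n)^{-1}$. The substantive assumption doing the work is (\ref{succminbound}), whose verification in the concrete examples (via continued fractions, in dimensions $2$ and $3$) is the genuinely hard part of this section, but that lies outside the present lemma.
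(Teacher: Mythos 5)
Your proof is correct and follows the paper's route exactly: the paper simply states that the lemma "follows immediately from Lemma \ref{lemmasixone}," and your argument is the explicit arithmetic behind that remark (using $c_j\leq c_\La$, substituting $a=1/(4\k(2c_\La n)^{n-1})$ to get the coefficient $\tfrac12(c_\La n)^{-(n-1)}$, and absorbing the $\tfrac12$ via $c_\La n\geq 2$, with unimodularity giving $\det\La=1$ for the first inequality). No issues.
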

\begin{proof}
This follows immediately from Lemma \ref{lemmasixone}.
\end{proof}
Next we prove a general criterion for $\La$ under which we have
\begin{alignat}1\label{errorlowerbound}
\#(\Zj\cap\La)-\Vol\Zj\geq \cet \inf_{0<\Ac\leq \Qm}\left(\frac{\Rz}{\mu(\La,\Ac)}+\frac{\Qm}{\Ac}\right)^{N-1}
\end{alignat}
with a certain constant $\cet>0$. 

\begin{proposition}\label{lowerboundcrit}
Suppose that the condition (\ref{succminbound}) and 
\begin{alignat}2
\label{growcond}
\nu\left(\frac{|\vvx_j|}{\nu(|\vvx_j|)^n}\right)\geq \con \nu(|\vvx_j|)
\end{alignat}
for some constant $\con>0$ hold true.
Then there exists $\cet=\cet(\k,c_\La,n,\con)>0$ such that (\ref{errorlowerbound}) holds true for all $j$ large enough.
\end{proposition}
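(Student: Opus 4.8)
The plan is to sandwich the error term of Theorem \ref{Thm1} between the lower bound $(c_\La n)^{-n}N_j^{n-1}$ from (\ref{LEZbound}) and a suitable upper bound for $\inf_{0<\Ac\leq \Qm}\left(\frac{\Rz}{\mu(\La,\Ac)}+\frac{\Qm}{\Ac}\right)^{N-1}$. Since $N=n$ and $\t=n$ here, it suffices to show that this infimum is $\ll_{\k,c_\La,n,\con}N_j^{(n-1)/n}$; then (\ref{LEZbound}) gives $\LEZi\geq (c_\La n)^{-n}N_j^{n-1}=(c_\La n)^{-n}\bigl(N_j^{(n-1)/n}\bigr)^{n/(n-1)\cdot(n-1)}$, wait — more carefully, since $N_j^{n-1}=\bigl(N_j^{(n-1)/n}\bigr)^{n}$ while the infimum raised to the power $N-1=n-1$ should also be comparable to $N_j^{n-1}$, I actually need the infimum itself (before the $(N-1)$-th power) to be $\asymp N_j^{(n-1)/n}$, with the matching lower bound being the easy direction. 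So the real content is an \emph{upper} bound $\inf_{\Ac}(\cdots)\ll N_j^{(n-1)/n}$, after which (\ref{errorlowerbound}) follows by combining with (\ref{LEZbound}) and raising to the power $n-1$.

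To bound the infimum from above I would simply make a clever choice of $\Ac$. The natural candidate is $\Ac=\Ac_j:=|\vvx_j|/\nu(|\vvx_j|)^n$, which is $\leq \Qm$ for $j$ large (since $\Qm=N_j|x_{j,\max}|\geq N_j\cdot\text{const}\cdot|\vvx_j|$ and $N_j=a\nu(|\vvx_j|)^{-n}$, so $\Qm/\Ac_j\asymp a$; one should check $a\le 1$, which holds by the definition $a=1/(4\k(2c_\La n)^{n-1})$ and $c_\La,\k\geq 1$, $n\geq 2$). With this choice the second summand $\Qm/\Ac_j$ is bounded by an absolute constant times $|x_{j,\max}|^{-1}\cdot|x_{j,\max}|\cdots$ — concretely $\Qm/\Ac_j=N_j|x_{j,\max}|\cdot\nu(|\vvx_j|)^n/|\vvx_j|\leq a|\vvx_j|\cdot\nu^n/(|\vvx_j|\nu^n)\cdot$(ratio of norms)$\ll 1$ using $|x_{j,\max}|\le|\vvx_j|$. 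For the first summand, $\mu(\La,\Ac_j)=\min\{\la_1(\La\cap\C),\nu(\La,\Ac_j)\}$; the growth condition (\ref{growcond}) gives $\nu(\La,\Ac_j)\geq \con\,\nu(|\vvx_j|)$, and $\la_1(\La\cap\C)$ is a fixed positive constant (recall $\La$ is weakly admissible, and $\C$ is a fixed coordinate subspace, so $\La\cap\C$ is a lattice of rank $n-1$ in $\C$ with positive first minimum — if $\La\cap\C=\{\vv0\}$ then that term is $\infty$ and drops out). Hence $\mu(\La,\Ac_j)\gg_{\con,\La}\nu(|\vvx_j|)$, giving $\Rz/\mu(\La,\Ac_j)\ll \Rz/\nu(|\vvx_j|)\ll (a\k)^{1/n}N_j^{(n-1)/n}/\nu(|\vvx_j|)$ by (\ref{Rzest}). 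Finally $N_j^{1/n}=a^{1/n}\nu(|\vvx_j|)^{-1}$, so $N_j^{(n-1)/n}/\nu(|\vvx_j|)=a^{-1/n}N_j^{(n-1)/n}\cdot N_j^{1/n}/a^{-1/n}\cdots$ — cleaning up, $\Rz/\nu(|\vvx_j|)\ll N_j^{(n-1)/n}\cdot N_j^{1/n}=N_j$? That would be too weak; I need to recompute: $\Rz/\nu\ll N_j^{(n-1)/n}/\nu$ and $1/\nu=a^{-1/n}N_j^{1/n}$, so $\Rz/\nu\ll N_j^{(n-1)/n}\cdot N_j^{1/n}=N_j$, which combined with the $(n-1)$-th power gives $N_j^{n-1}$ — that is exactly the right order. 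So in fact the infimum is $\asymp N_j$, not $N_j^{(n-1)/n}$, and after raising to the power $n-1$ we get $\asymp N_j^{n-1}$, matching (\ref{LEZbound}). Good — the two bounds are of the same order, which is the whole point.

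Assembling: the chain reads
\begin{alignat*}1
\LEZi\ \geq\ (c_\La n)^{-n}N_j^{n-1}\ \gg_{\k,c_\La,n,\con}\ \left(\frac{\Rz}{\mu(\La,\Ac_j)}+\frac{\Qm}{\Ac_j}\right)^{n-1}\ \geq\ \inf_{0<\Ac\leq\Qm}\left(\frac{\Rz}{\mu(\La,\Ac)}+\frac{\Qm}{\Ac}\right)^{n-1},
\end{alignat*}
where the first inequality is (\ref{LEZbound}), the middle step is the upper bound on the chosen summands just established (so that $N_j^{n-1}\gg$ the bracket evaluated at $\Ac_j$), and the last is trivial. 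This yields (\ref{errorlowerbound}) with $\cet=\cet(\k,c_\La,n,\con)>0$ for all $j$ large enough, as claimed.

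\textbf{Main obstacle.} The delicate point is tracking the exponents correctly: one must verify that the chosen $\Ac_j$ makes \emph{both} summands $\Rz/\mu(\La,\Ac_j)$ and $\Qm/\Ac_j$ of size $\ll N_j$ (equivalently, that the infimum is $O(N_j)$ rather than something larger), using (\ref{Rzest}), the definition of $N_j$, and crucially the growth hypothesis (\ref{growcond}) to lower-bound $\nu(\La,\Ac_j)$ — without (\ref{growcond}) the quantity $\nu(\La,\Ac_j)$ could be far smaller than $\nu(|\vvx_j|)$ and the infimum could blow up. A secondary nuisance is the bookkeeping for $\la_1(\La\cap\C)$: one should note it is a positive constant independent of $j$ (absorb it into $\cet$), and handle the degenerate case $\La\cap\C=\{\vv0\}$ separately by the convention $1/\infty=0$. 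Everything else is routine substitution.
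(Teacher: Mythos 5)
Your overall strategy is the same as the paper's (bound the infimum in (\ref{errorlowerbound}) by evaluating at one well-chosen $\Ac$, lower-bound $\nu$ there via (\ref{growcond}), use (\ref{Rzest}), and compare with (\ref{LEZbound})), but there is a concrete error at the step where you place your test value inside the range of the infimum. You choose $\Ac_j=|\vvx_j|/\nu(|\vvx_j|)^n$ and claim $\Ac_j\leq\Qm$ ``since $a\leq 1$''; your own computation shows the opposite. Indeed $\Ac_j=N_j|\vvx_j|/a$ while $\Qm=N_j\max_i|x_{ji}|\leq N_j|\vvx_j|$, so $\Qm/\Ac_j=a\max_i|x_{ji}|/|\vvx_j|\leq a<1$, i.e.\ $\Ac_j>\Qm$ for every $j$: the inequality $a\leq1$ is precisely what pushes $\Ac_j$ outside $(0,\Qm]$. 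Consequently the last inequality of your displayed chain, $\left(\frac{\Rz}{\mu(\La,\Ac_j)}+\frac{\Qm}{\Ac_j}\right)^{n-1}\geq\inf_{0<\Ac\leq\Qm}\left(\frac{\Rz}{\mu(\La,\Ac)}+\frac{\Qm}{\Ac}\right)^{n-1}$, is not justified, since evaluating at a point outside the range gives no upper bound on that infimum. The repair is exactly the paper's route: take $\Ac=\Qm$, so the second summand equals $1$, and use that $\nu(\La,\cdot)$ is non-increasing together with $\Qm\leq N_j|\vvx_j|=a|\vvx_j|/\nu(|\vvx_j|)^n\leq|\vvx_j|/\nu(|\vvx_j|)^n$ (here $a\leq1$ is used in the correct direction) to get $\nu(\La,\Qm)\geq\nu\left(|\vvx_j|/\nu(|\vvx_j|)^n\right)\geq\con\nu(|\vvx_j|)$ by (\ref{growcond}); then (\ref{Rzest}) gives $\frac{\Rz}{\mu(\La,\Qm)}+1\leq(2\k^{1/n}/\con)N_j$ for $j$ large, and combining with (\ref{LEZbound}) yields (\ref{errorlowerbound}). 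Your remaining estimates (the role of (\ref{growcond}), the arithmetic showing the infimum is of order $N_j$, and the comparison with $N_j^{n-1}$) are correct and coincide with the paper's.

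A secondary point: you propose to absorb $\la_1(\La\cap \C)$ into $\cet$, but the proposition requires $\cet=\cet(\k,c_\La,n,\con)$, so no further dependence on $\La$ is allowed. The correct handling uses the clause ``for all $j$ large enough'': since $\La$ is not admissible, $N_j\rightarrow\infty$ by Lemma \ref{lemmasixone}, hence $\nu(|\vvx_j|)\rightarrow0$, so eventually $\con\nu(|\vvx_j|)\leq\la_1(\La\cap \C)$ and the minimum defining $\mu(\La,\Qm)$ is controlled by the $\nu$-term alone, with no $\La$-dependent constant entering the bound. (Your treatment of the degenerate case $\La\cap\C=\{\vv0\}$ via the convention $\inf\emptyset=\infty$ is fine.)
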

\begin{proof}
We have $\Qm\leq N_j|\vvx_j|$, and so ignoring the first few members of the sequence $\vvx_j$,  
we can assume that 
$$\mul\geq\nu(N_j|\vvx_j|)=\nu(a|\vvx_j|/\nu(|\vvx_j|)^n)\geq \nu(|\vvx_j|/\nu(|\vvx_j|)^n)\geq \con\nu(|\vvx_j|).$$ 
Hence,
$$\inf_{0<\Ac\leq \Qm}\left(\frac{\Rz}{\mu(\La,\Ac)}+\frac{\Qm}{\Ac}\right)\leq \left(\frac{\Rz}{\mul}+1\right)\leq\left(\frac{\Rz}{\con \nu(|\vvx_j|)}+1\right)\leq (2\k^{1/n}/\con)N_j$$
for all $j$ large  enough.
This, in conjunction with (\ref{LEZbound}), shows that (\ref{errorlowerbound}) holds true.
\end{proof}
For the rest of this section we assume that 
\begin{alignat}3\label{Cchoice}
\C=\{\vvx;\vx_n=\v0\}.
\end{alignat}
We now apply Proposition \ref{lowerboundcrit} to prove the case $n=2$ in Theorem \ref{sharp}. 
\begin{proposition}\label{Thmsharptwo}
Suppose $n=2$. Then there exists a unimodular, weakly admissible lattice $\La$ for $(\S,\C)$,
and a sequence of increasingly  distorted (i.e., $\Rz/\Qm$ tends to zero), aligned boxes $\Z=[-Q_1,Q_1]\times[-Q_2,Q_2]$ whose volume $(2\Rz)^2$ tends to infinity
such that  
$$\LEZ \geq c_{abs} \inf_{0<\Ac\leq \Qm}\left(\frac{\Rz}{\mu(\La,\Ac)}+\frac{\Qm}{\Ac}\right),$$
where $c_{abs}>0$ is an absolute constant.
\end{proposition}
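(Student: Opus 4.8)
The plan is to explicitly construct a unimodular, weakly admissible lattice $\La$ in $\IR^2$ for which I can invoke Proposition \ref{lowerboundcrit}. Since $n=2$, the relevant data is $\C=\{(x_1,x_2);x_2=0\}$, $\Nmm(\vvx)=|x_1||x_2|$, and $\t=2$. I would take $\La=\La_\alpha$ generated (in the standard way, as in Skriganov's examples) so that $\La$ contains vectors closely related to the continued fraction convergents of a well-chosen irrational $\alpha$: say $\La_\alpha = \begin{bmatrix} 1 & \alpha \\ 0 & 1\end{bmatrix}\IZ^2 = \{(p+\alpha q, q); (p,q)\in\IZ^2\}$, which is exactly the Example-type lattice \eqref{Dioappexample} with $r=s=1$ and $\Theta=\alpha$. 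Then $\La_\alpha\backslash\C$ consists of those lattice points with $q\neq 0$, and $\Nmm(p+\alpha q,q)^{1/2}=(|p+\alpha q||q|)^{1/2}$, so $\nu(\La_\alpha,\rho)^2 = \inf\{|p+\alpha q||q|; q\neq 0, |(p+\alpha q,q)|<\rho\}$. This is governed by the continued fraction expansion of $\alpha$: the convergents $p_k/q_k$ give $|q_k\alpha - p_k| \asymp 1/q_{k+1}$, so $|p_k+\alpha q_k||q_k| \asymp q_k/q_{k+1}$.

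The key point is to choose $\alpha$ with a lacunary, rapidly growing sequence of partial quotients (so $q_{k+1}/q_k\to\infty$), which makes $\La_\alpha$ weakly admissible but not admissible, and — crucially — makes the function $\nu(\La_\alpha,\cdot)$ essentially constant over the long ranges $\rho\in[c q_k, c q_{k+1})$ between consecutive convergents. Taking $\vvx_j$ to be the lattice point corresponding to the convergent $p_{k}/q_{k}$ (with $j=k$ along a suitable subsequence), I have $|\vvx_j|\asymp q_k$ and $\nu(|\vvx_j|)^2\asymp q_{k-1}/q_k$ (the infimum up to radius $\asymp q_k$ being achieved at the previous convergent). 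The quantity $|\vvx_j|/\nu(|\vvx_j|)^n = |\vvx_j|/\nu(|\vvx_j|)^2 \asymp q_k\cdot(q_k/q_{k-1}) = q_k^2/q_{k-1}$, and I must check that this radius still falls short of (a constant times) the next convergent's denominator $q_{k+1}$; if the partial quotients grow fast enough — e.g. $q_{k+1} \gg q_k^2/q_{k-1}$, which holds for sufficiently lacunary sequences — then $\nu$ has not yet dropped, giving $\nu(|\vvx_j|/\nu(|\vvx_j|)^2) \asymp \nu(|\vvx_j|)$, i.e. the growth condition \eqref{growcond} holds with an absolute $\con>0$. The successive-minima condition \eqref{succminbound} is automatic for $n=2$: $c_j=\la_{n-1}(\La,\Cx)=\la_1(\La,\Cx)\leq 1$ by the footnote, so $c_\La=1$ works. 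Finally, I should note that the required Norm bound $\Nmm(\vvx_j)\leq \k\nu(|\vvx_j|)^n$ defining the admissible sequence holds with an absolute $\k$ for the convergent points (indeed $\Nmm(\vvx_j)=|p_k+\alpha q_k||q_k|$ is, up to constants, the value realizing a nearby infimum), and that condition \eqref{Qorder} is met because with $n=2$, $i\notin I$ means $i=1$, and one checks $Q_1=N_j|x_{j1}|\leq \Rz$ reduces to $|x_{j1}|\le |x_{j2}|$ type inequality, i.e. $|p_k+\alpha q_k|\leq |q_k|$, true for $k$ large.

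With all hypotheses of Proposition \ref{lowerboundcrit} verified, it yields $\#(\Zj\cap\La)-\Vol\Zj \geq \cet\inf_{0<\Ac\leq\Qm}(\Rz/\mu(\La,\Ac)+\Qm/\Ac)^{N-1}$ for all large $j$, with $\cet=\cet(\k,c_\La,n,\con)$; since $\k,c_\La,n,\con$ are all absolute here, $\cet$ is an absolute constant $c_{abs}$. Lemma \ref{lemmasixone} supplies the remaining assertions: $N_j\to\infty$, $\Rz/\Qm\to 0$, and (via \eqref{Rzest} and $N_j\to\infty$) the volume $(2\Rz)^2\to\infty$. The main obstacle I anticipate is the careful bookkeeping in the construction of $\alpha$: one must simultaneously arrange that the partial quotients grow fast enough for \eqref{growcond} to hold (lacunarity of the $q_k$), yet the construction still produces a genuine weakly admissible — not admissible — lattice, and that the chosen sequence $\vvx_j$ really does satisfy the defining norm inequality with an \emph{absolute} $\k$ and realizes the claimed asymptotics $\nu(|\vvx_j|)^2\asymp q_{k-1}/q_k$; getting the implied constants to be absolute (independent of the construction) requires choosing the growth rate of the partial quotients to be a fixed, explicit lacunary sequence rather than an arbitrarily fast-growing one.
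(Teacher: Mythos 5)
Your overall framework is the paper's: take $\La=\{(p-q\alpha,q);\,p,q\in\IZ\}$, let $\vvx_j$ be the points coming from the continued fraction convergents $p_j/q_j$, verify (\ref{succminbound}) (automatic for $n=2$), (\ref{Qorder}), the defining inequality $\Nmm(\vvx_j)\le\k\,\nu(|\vvx_j|)^2$, and the growth condition (\ref{growcond}), then invoke Proposition \ref{lowerboundcrit} and Lemma \ref{lemmasixone}. The gap is in the decisive quantitative choice of $\alpha$: you take the partial quotients to grow \emph{lacunarily}, with $q_{k+1}\gg q_k^2/q_{k-1}$ (equivalently $a_{k+1}/a_k\to\infty$), and argue that since the radius $|\vvx_k|/\nu(|\vvx_k|)^2\asymp q_k^2/q_{k-1}$ stays below $q_{k+1}$, ``$\nu$ has not yet dropped''. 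That is false: $\nu$ drops as soon as the radius passes $|\vvx_k|\asymp q_k$, because the point $\vvx_k$ itself then enters the ball, and $\Nmm(\vvx_k)\asymp q_k/q_{k+1}\asymp 1/a_{k+1}$. Hence $\nu\bigl(|\vvx_k|/\nu(|\vvx_k|)^2\bigr)^2\le\Nmm(\vvx_k)\asymp 1/a_{k+1}$, while $\nu(|\vvx_k|)^2=\Nmm(\vvx_{k-1})\asymp 1/a_k$, so (\ref{growcond}) with an absolute $\con$ forces $a_{k+1}\ll a_k$ up to an absolute constant --- exactly the opposite of lacunarity. With your choice no absolute $\con$ exists and Proposition \ref{lowerboundcrit} does not apply. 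A second casualty of rapid growth is the volume requirement: $\Vol\Zj=4N_j^2\Nmm(\vvx_j)\asymp a_j^2/a_{j+1}$, which for, say, $a_{j+1}\ge a_j^2$ stays bounded, so $(2\Rz)^2$ need not tend to infinity.

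The correct tuning is the opposite one: let the partial quotients tend to infinity \emph{slowly}, with bounded consecutive ratio. The paper sets $a_0=a_1=1$ and $a_{j+1}=[\log q_j]+1$, which yields $a_j+\log a_j-1\le a_{j+1}\le 3a_j$. Then $a_j\to\infty$ (so $\La$ is weakly admissible but not admissible, and $N_j\to\infty$), the values $\Nmm(\vvx_j)$ are strictly decreasing, so $\nu(|\vvx_j|)^2=\Nmm(\vvx_{j-1})$ and $\k=1$; one checks $|\vvx_j|/\nu(|\vvx_j|)^2\le|\vvx_{j+1}|$, whence $\nu\bigl(|\vvx_j|/\nu(|\vvx_j|)^2\bigr)^2\ge\Nmm(\vvx_j)>1/(a_{j+1}+2)\gg 1/a_j\asymp\nu(|\vvx_j|)^2$, giving (\ref{growcond}) with an absolute $\con$; and $\Vol\Zj\asymp a_j^2/a_{j+1}\gg a_j\to\infty$. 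So keep your reduction to Proposition \ref{lowerboundcrit}, but replace ``sufficiently lacunary'' by ``unbounded partial quotients with $a_{j+1}\asymp a_j$''.
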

\begin{proof}
Let $\alpha$ be an irrational real number, and consider the lattice $\La$ given by the vectors $(p-q\alpha,q)$ with $p,q\in \IZ$.
Then $\La$ is unimodular and weakly admissible for $(\S,\C)$.
To choose an appropriate $\alpha$ we consider its continued fraction expansion $\alpha=[a_0,a_1,a_2,\ldots]$. Using the recurrence relation $q_{j+1}=a_{j+1}q_j+q_{j-1}$
for the denominator $q_j$ of the $j$-th convergent $p_j/q_j$ (in lowest terms) we can define $\alpha$ by setting $a_0=a_1=1$ (so that $q_0=q_1=1$)
and $a_{j+1}=[\log q_j]+1$. Next we note that
$a_{j+1}=[\log(a_jq_{j-1}+q_{j-2})]+1\leq \log((a_j+1)q_{j-1})+1\leq \log(a_j+1)+a_j+1\leq 3a_j$. Similarly we find $a_j+\log a_j-1\leq a_{j+1}$, and hence,
$$a_j+\log a_j-1\leq a_{j+1}\leq 3a_j.$$
Put $\vvx_j=(p_j-q_j\alpha,q_j)\in \La\backslash\C$ so that $|\vvx_j|>|\vvx_{j-1}|$, at least for $j$ large enough.
From the theory of continued fractions we know that for
$\vvx\in \La\backslash\C$ the inequality $\Nmm(\vvx)<1/2$ implies that $\vvx=c\vvx_j$ for some non-zero integer $c$ and $j\in \IN$.
We conclude that for all sufficiently large $\rho$ we have $\nu(\rho)^2=\Nmm(\vvx_j)$ for some $j$.
Also by the theory of continued fractions we know that  
$$1/(a_{j+1}+2)<\Nmm(\vvx_j)<1/a_{j+1}.$$  
Since $a_{j}>a_{j-1}+2$ we conclude $\Nmm(\vvx_{j-1})<\Nmm(\vvx_{j-2})$ and thus 
$$\Nmm(\vvx_{j-1})=\nu(|\vvx_j|)^2$$ 
for $j$ large enough; so we can take $\k=1$. We also easily find that $|\vvx_j|/\nu(|\vvx_j|)^2\leq |\vvx_{j+1}|$ for $j$ large enough.
It is now straightforward to verify (\ref{growcond}).
Moreover, for $j$ large enough, (\ref{Qorder}) holds true, and so $\Zj$ is an eligible set.
Since $n=2$ we automatically have (\ref{succminbound}) with $c_\La=1$.
Hence we can apply Proposition \ref{lowerboundcrit}. Finally, we note that $\Vol \Zj=4N_j^2\Nmm(\vvx_j)=(2a)^2\Nmm(\vvx_{j-1})^{-2}\Nmm(\vvx_j)\geq 2^{-6}a_j^2/(a_{j+1}+2)$ which tends to infinity, and moreover, that
the boxes $\Zj$ are increasingly distorted by Lemma \ref{lemmasixone}. This completes the proof.
\end{proof}
Next we prove the case $n=3$ in Theorem \ref{sharp}. 
This case does not rely on Proposition \ref{lowerboundcrit}.
\begin{proposition}\label{lowerboundexample}
Suppose $n=3$. Then there exists a unimodular, weakly admissible lattice $\La$ for $(\S,\C)$, 
and a sequence of increasingly  distorted, aligned boxes $\Z=[-Q_1,Q_1]\times[-Q_2,Q_2]\times[-Q_3,Q_3]$ whose volume $(2\Rz)^3$ tends to infinity
such that  
$$\LEZ \geq c_{abs} \inf_{0<\Ac\leq \Qm}\left(\frac{\Rz}{\mu(\La,\Ac)}+\frac{\Qm}{\Ac}\right)^2,$$
where $c_{abs}>0$ is an absolute constant.
\end{proposition}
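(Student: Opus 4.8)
The plan is to mimic the structure of Proposition \ref{Thmsharptwo} but in three dimensions, where the obstruction is exactly the crucial hypothesis (\ref{succminbound}): we need a sequence of lattice points $\vvx_j\in\La\backslash\C$ whose associated boxes $\Cx$ have second successive minimum $\la_2(\La,\Cx)$ uniformly bounded. In dimension $2$ this was automatic; now it must be engineered. First I would look for a lattice of the form $\La=\{(p_1-q\alpha,p_2-q\beta,q);(p_1,p_2,q)\in\IZ^3\}$ for suitable real numbers $\alpha,\beta$, which is unimodular, and whose intersection with $\C=\{\vx_3=\v0\}$ is the standard $\IZ^2$ in the first two coordinates; note $\la_1(\La\cap\C)=1$, so $\mu(\La,\cdot)=\min\{1,\nu(\La,\cdot)\}$ as before. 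For this $\La$ to be weakly admissible one needs $|p_1-q\alpha|\,|p_2-q\beta|\,|q|>0$ whenever $q\neq 0$, i.e.\ both $\alpha,\beta\notin\IQ$ (and one coordinate never vanishing is automatic for a single $q$). Simultaneous approximation to $(\alpha,\beta)$ then produces the sequence $\vvx_j$ with small $\Nmm(\vvx_j)$; the key is to choose $\alpha,\beta$ (e.g.\ via a carefully controlled sequence of best simultaneous approximation vectors, analogous to the continued-fraction construction) so that $\Nmm(\vvx_j)\leq\k\,\nu(\La,|\vvx_j|)^n$ holds with a fixed $\k$ and so that the growth of $|\vvx_j|$ versus $\nu(\La,|\vvx_j|)$ is controlled just as the $a_{j+1}\in[a_j+\log a_j-1,3a_j]$ estimate controlled things for $n=2$.

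Once such a sequence is in hand, I would set $N_j:=a\,\nu(\La,|\vvx_j|)^{-n}$, $\Zj:=N_j\Cx$, and choose the minimal eligible $Q_i=N_j|x_{ji}|$, so that (\ref{Rzest}) gives $\Rz\le(a\k)^{1/n}N_j^{(n-1)/n}$; I would check (\ref{Qorder}) holds for $j$ large. The heart of the matter is to verify (\ref{succminbound}): that $\la_2(\La,\Cx)=c_j$ stays bounded. This is where the three-dimensional case genuinely differs. The box $\Cx$ is a very thin slab — it is long ($|x_{j3}|=q_j$ large) in the $q$-direction but extremely short in the other two directions — so one expects two short independent lattice vectors inside $c_\La\Cx$: one should be able to produce these from the lattice structure directly (e.g.\ a vector coming from $\La\cap\C$ of the shape $(p_1,p_2,0)$ with $|p_1|,|p_2|$ comparable to the slab thickness, using best approximation denominators, together with a second such vector), forcing $\la_2(\La,\Cx)\le c_\La$ for an absolute $c_\La$. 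Proving this cleanly — that the approximation construction can be arranged so that two independent short vectors always fit in a bounded dilate of $\Cx$ — is the main obstacle and the step I would spend the most care on.

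With (\ref{succminbound}) established, Lemma \ref{lemmasixone} applies verbatim: taking $a:=1/(4\k(2c_\La n)^{n-1})$ gives $\LEZi\ge\#(\Zj\cap\La)-\Vol\Zj\ge(c_\La n)^{-n}N_j^{n-1}$, which is (\ref{LEZbound}), and Lemma \ref{lemmasixone} also yields that $N_j\to\infty$ and $\Rz/\Qm\to0$. For the matching upper bound on the infimum $\inf_{0<\Ac\le\Qm}(\Rz/\mu(\La,\Ac)+\Qm/\Ac)^{N-1}$, the statement says this case does not use Proposition \ref{lowerboundcrit}, presumably because the clean growth condition (\ref{growcond}) is awkward to verify for the two-frequency construction; instead I would bound the infimum directly by evaluating at a convenient $\Ac$ — for instance $\Ac=|\vvx_j|$ or a small multiple — and using $\Qm\le N_j|\vvx_j|$ together with $\Rz\ll N_j^{(n-1)/n}$ and the bound on $\mu(\La,N_j|\vvx_j|)$ in terms of $\nu(\La,|\vvx_j|)$ coming from the controlled growth of the approximation sequence, to get $\inf(\cdots)\ll N_j$, hence $\inf(\cdots)^{N-1}\ll N_j^{n-1}$. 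Comparing with (\ref{LEZbound}) gives (\ref{errorlowerbound}) with an absolute constant $\cet=c_{abs}$. Finally, as in the $n=2$ case, $\Vol\Zj=(2N_j)^n\Nmm(\vvx_j)$, and using $\Nmm(\vvx_j)\le\k\nu(\La,|\vvx_j|)^n=\k(a/N_j)$ from the normalisation together with the lower bound on $\Nmm(\vvx_j)$ supplied by the construction shows $\Vol\Zj\gg N_j^{n-1}\to\infty$; combined with the distortion statement of Lemma \ref{lemmasixone} this completes the proof.
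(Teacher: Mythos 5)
Your outline reproduces the paper's general scaffolding (the sequence $\vvx_j$, the normalisation $N_j=a\,\nu(|\vvx_j|)^{-n}$, Lemma \ref{lemmasixone}, the choice of $a$, bounding the infimum by evaluating at a suitable $\Ac$, and the volume computation), but the core of the proposition is exactly the part you leave open, and the concrete mechanism you sketch for it would fail. You propose the two-frequency lattice $\{(p_1-q\alpha,p_2-q\beta,q)\}$ with best \emph{simultaneous} approximations, and you suggest certifying (\ref{succminbound}) by finding two short independent vectors of the form $(p_1,p_2,0)\in\La\cap\C$ inside a bounded dilate of $\Cx$. This cannot work: the first two side lengths of $\Cx$ are $|x_{j1}|,|x_{j2}|$, which tend to $0$, while every nonzero vector of $\La\cap\C=\IZ^2\times\{0\}$ has a coordinate of absolute value at least $1$; so no such vector lies in $c_\La\Cx$ for any fixed $c_\La$ once $j$ is large. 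Moreover, the simultaneous-approximation route has a second structural problem: for a badly approximable pair $(\alpha,\beta)$ the product $|p_1-q\alpha|\,|p_2-q\beta|\,|q|$ is bounded below, i.e.\ the lattice is admissible, and then $N_j$ does not tend to infinity (Lemma \ref{lemmasixone} explicitly uses non-admissibility); for other pairs, the uniform $\k$, the control of $\nu$, and the bound on $\la_2(\La,\Cx)$ are precisely what remains unproved. So the proposal, as written, has a genuine gap at the decisive step.

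The paper resolves this by a different choice of lattice: it takes a single badly approximable $\alpha$ and repeats it, $\La=\{(p_1-q\alpha,p_2-q\alpha,q);\,p_1,p_2,q\in\IZ\}$, with $\vvx_j=(p_j-q_j\alpha,p_j-q_j\alpha,q_j)$ built from the convergents $p_j/q_j$. Then $\Nmm(\vvx)\gg|\vvx|^{-1}$ on $\La\backslash\C$ gives $\nu(\rho)\asymp\rho^{-1/3}$ (so the lattice is weakly admissible but not admissible, with completely explicit decay), the two independent short vectors needed for (\ref{succminbound}) are simply $\vvx_j$ and $\vvx_{j+1}$ (using $q_{j+1}\ll q_j$ and $|p_{j+1}-q_{j+1}\alpha|<|p_j-q_j\alpha|$, both lie in a bounded dilate of $\Cx$), and the infimum is bounded directly by taking $\Ac=N_j$, since $\Qm\asymp N_j^2$ and $\Rz\ll N_j^{2/3}$ force $\Rz/\nu(\Ac)\ll\Qm/\Ac\ll N_j$; this is also why Proposition \ref{lowerboundcrit} is not needed. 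If you want to salvage your write-up, replace the $(\alpha,\beta)$ simultaneous-approximation construction by this repeated-$\alpha$ construction (or supply an honest proof of (\ref{succminbound}) and of non-admissibility for a specific pair $(\alpha,\beta)$, which is substantially harder); with a fixed choice such as the golden ratio the implied constants, which a priori depend on $\max_i a_i$, become absolute as required.
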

\begin{proof}
Let $\alpha=[a_0,a_1,a_2,\ldots]$ be a badly approximable real number, so that
the partial quotients $a_i$ are bounded.  We set $\a=\max a_i$, and we
consider the lattice
\begin{alignat}3\label{latticeexample}
\La=\{(p_1-q\alpha,p_2-q\alpha,q); p_1,p_2,q\in \IZ\}.
\end{alignat}
The lattice $\La$ is unimodular and weakly admissible for $(\S,\C)$.
In this proof we write $h\ll g$ to mean $h\leq c g$ for a constant $c=c(\a)$
depending only on $\a$. 
First we note that
\begin{alignat*}3
\Nmm(\vvx)\gg |\vvx|^{-1}
\end{alignat*}
for every $\vvx\in \La\backslash \C$. Hence,
\begin{alignat}3\label{nulower}
\nu(\rho)\gg \rho^{-1/3}.
\end{alignat}
Now suppose $p_j/q_j$ is the $j$-th convergent of $\alpha$, and put $\vvx_j=(p_j-q_j\alpha,p_j-q_j\alpha,q_j)\in \La\backslash\C$.
Then, for $j$ large enough, (\ref{Qorder}) holds true, and so $\Zj$ is an eligible set.
Since 
\begin{alignat*}3
\Nmm(\vvx_j)\ll |\vvx_j|^{-1},
\end{alignat*}
we also conclude that there exists $\k=\k(\a)\geq 1$ such that 
\begin{alignat*}3
\Nmm(\vvx_j)\leq\k\nu(|\vvx_j|)^{3}.
\end{alignat*}
Since $q_{j+1}=a_{j+1}q_j+q_{j-1}$ we get $q_{j+1}\ll q_j$ and, as is wellknown,  $|p_{j+1}-q_{j+1}\alpha|<|p_j-q_j\alpha|$.
Furthermore, $(p_j,q_j)$ and $(p_{j+1},q_{j+1})$ are linearly independent, and thus $\vvx_j$ and $\vvx_{j+1}$ are linearly independent. Hence, we conclude
$$c_j:=\la_2(\La,\Cx)\ll 1,$$
and thus, by virtue of (\ref{LEZbound}), we get $\LEZi\gg N_j^2$.
Moreover, for $j$ sufficiently large, we have 
\begin{alignat}3\label{normcomp}
|\vvx_{j-1}|<|\vvx_{j}|\ll |\vvx_{j-1}|,
\end{alignat}
and thus
\begin{alignat}3\label{nuupper}
\nu(|\vvx_j|)\leq \Nmm(\vvx_{j-1})^{1/3}\ll |\vvx_{j-1}|^{-1/3}\ll |\vvx_{j}|^{-1/3}.
\end{alignat}
Combining  (\ref{nulower}), (\ref{normcomp}) and (\ref{nuupper}) implies that 
\begin{alignat*}3
\rho^{-1/3}\ll\nu(\rho)\ll \rho^{-1/3}.
\end{alignat*}
Therefore, we have
\begin{alignat*}3
N_j\ll\nu(|\vvx_j|)^{-3}\ll |\vvx_j|\ll q_j\leq |\vvx_j| \ll\nu(|\vvx_j|)^{-3}\ll N_j.
\end{alignat*}
Thus, $N_j^2\ll \Qm=N_jq_j\ll N_j^2$, and due to (\ref{Rzest}), $\Rz\ll N_j^{2/3}$.
Hence, with $B=N_j$ we have  
\begin{alignat*}1
\frac{\Rz}{\nu(\Ac)}\ll \frac{\Qm}{\Ac},
\end{alignat*}
and thus for all $j$ large enough
\begin{alignat*}3
\inf_{0<\Ac\leq \Qm}\left(\frac{\Rz}{\mu(\La,\Ac)}+\frac{\Qm}{\Ac}\right)^{2}\ll\left(\frac{\Qm}{\Ac}\right)^{2}\ll N_j^{2}\ll \LEZi. 
\end{alignat*}
Hence, we have shown that (\ref{errorlowerbound}) holds true. Finally, we observe that $\Vol \Zj=8N_j^3\Nmm(\vvx_j)\gg N_j^2$ which, due to Lemma \ref{lemmasixone}, completes the proof.
\end{proof}

\section{$\mathcal{F}_{\ka,M}$ - Families via o-minimality}\label{omin}
In this section let $d\geq 1$ and $\Da\geq 2$ both be integers. For $Z\subset \IR^{d+\Da}$ 
and $T\in \IR^d$ we write $Z_T=\{x\in \IR^\Da; (T,x)\in Z\}$ and call this the fiber of $Z$ above $T$. 
For the convenience of the reader we quickly recall the definition of an o-minimal structure
following \cite{PilaWilkie}.
For more details we refer to \cite{Wilkie2007, PilaWilkie} and \cite{vandenDries1998}.
\begin{definition}\label{defomin}
A structure (over $\IR$) is a sequence $\mathcal{S}=(\mathcal{S}_n)_{n\in \IN}$ of families of subsets in $\IR^n$ such that for each $n$:
\begin{enumerate}
\item $\mathcal{S}_n$ is a boolean algebra of subsets of $\IR^n$ (under the usual set-theoretic operations).
\item $\mathcal{S}_n$ contains every semi-algebraic subset of $\IR^n$.
\item If $A \in \mathcal{S}_n$ and  $B\in \mathcal{S}_{m}$ then $A\times B \in \mathcal{S}_{n+m}$.
\item If $\pi: \IR^{n+m}\rightarrow \IR^n$ is the projection map onto the first $n$ coordinates and $A \in \mathcal{S}_{n+m}$ then $\pi(A) \in \mathcal{S}_n$.
\end{enumerate}
An o-minimal structure (over $\IR$) is a structure (over $\IR$) that additionally satisfies: 
\begin{enumerate}
\setcounter{enumi}{4}
\item The boundary of every set in $\mathcal{S}_1$ is finite.
\end{enumerate}
\end{definition} 
The archetypical example of an o-minimal structure is the family of all semi-algebraic sets.

Following the usual convention, we say a set $A$ is definable (in $\mathcal{S}$) if it lies in some $\mathcal{S}_n$.
A map $f:A\rightarrow B$ is called definable if its graph $\Gamma(f):=\{(x,f(x)); x\in A\}$ is a definable set.
\begin{proposition}\label{Propomin}
Suppose $Z\subset \IR^{d+\Da}$  is definable in an o-minimal structure over $\IR$,
and assume further that all fibers $Z_T$ are bounded sets.
Then there exist constants $\ka_Z$ and $M_Z$ depending only on $Z$ (but independent of $T$) such that the fibers
$Z_T$ lie in $\mathcal{F}_{\ka_Z,M_Z}$ for all $T\in \IR^d$.
\end{proposition}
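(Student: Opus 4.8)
\emph{Plan of proof.} The key point is that the definition of $\mathcal{F}_{\ka,M}$ quantifies over all $A\in\Aut_\Da(\IR)$, so the plan is to collect all the distorted copies $AZ_T$ into one definable family (with $A$ among the parameters), normalise each of its fibres into the unit cube, apply a uniform parametrisation theorem to the fibrewise boundaries, and then undo the normalisation while tracking how the Lipschitz constants scale.

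First I would set $\mathcal{W}:=\{(A,T,y)\in\Aut_\Da(\IR)\times\IR^{d}\times\IR^{\Da}\;;\;y\in AZ_T\}$. Since $\Aut_\Da(\IR)$ is semi-algebraic and $(A,T,x)\mapsto(A,T,Ax)$ is polynomial, $\mathcal{W}$ is the image under a coordinate projection of the definable set $\{(A,T,x,y);(T,x)\in Z,\ y=Ax\}$, hence definable, and for each $(A,T)$ its fibre $\mathcal{W}_{(A,T)}=AZ_T$ is bounded because $Z_T$ is. The parameters for which $AZ_T$ is empty or a single point form a definable subset of the parameter space, and for these the topological boundary is empty or a single point and so lies in $\mathcal{F}_{\ka,1}$ trivially for every $\ka$; so I would discard them and assume every fibre has at least two points, hence positive diameter. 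By definable choice in o-minimal structures over $\IR$ (see \cite{vandenDries1998}) there is a definable map $(A,T)\mapsto p(A,T)\in AZ_T$, and $\delta(A,T):=\diam(AZ_T)$ is a finite, positive, definable function of $(A,T)$. Applying fibrewise the affine bijection $y\mapsto\tfrac{1}{2}\bigl(\delta(A,T)^{-1}(y-p(A,T))+\mathbf{1}\bigr)$, with $\mathbf{1}=(1,\dots,1)$, turns $\mathcal{W}$ into a definable family $\widehat{\mathcal{W}}$ all of whose fibres are contained in $[0,1]^{\Da}$; crucially this reparametrisation is an affine homeomorphism of $\IR^{\Da}$, so it commutes with taking topological boundaries.

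Next I would pass to the fibrewise boundary $\mathcal{V}:=\{(A,T,y);\ y\in\partial(\widehat{\mathcal{W}}_{(A,T)})\}$. Since "$y$ lies in the closure of the fibre over $(A,T)$'' and "$y$ lies in its interior'' are first-order conditions in $(A,T,y)$, the set $\mathcal{V}$ is definable; and the topological boundary of a definable subset of $\IR^{\Da}$ is the union of the frontiers of that set and of its complement, each of dimension $<\Da$, so every fibre $\mathcal{V}_{(A,T)}\subset[0,1]^{\Da}$ is a definable set of dimension $\le\Da-1$. Now I would invoke the parametric form of the Yomdin--Gromov $C^1$-parametrisation theorem for definable families (as used in \cite{PilaWilkie}; a version tailored to lattice-point counting is in \cite{BarroeroWidmer}): there is an integer $M_Z$, depending only on $Z$, such that for every $(A,T)$ the set $\mathcal{V}_{(A,T)}$ is the union of the images of at most $M_Z$ maps $[0,1]^{k}\to[0,1]^{\Da}$ (with $k=\dim\mathcal{V}_{(A,T)}\le\Da-1$) whose first-order partial derivatives are all bounded by $1$, hence which are Lipschitz with a constant $c_\Da$ depending only on $\Da$. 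Precomposing each of these with the $1$-Lipschitz coordinate projection $[0,1]^{\Da-1}\to[0,1]^{k}$ shows $\partial(\widehat{\mathcal{W}}_{(A,T)})\in\mathrm{Lip}(\Da,M_Z,c_\Da)$.

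Finally I would undo the normalisation: since the affine map above commutes with the boundary, $\partial(AZ_T)$ is the image of $\partial(\widehat{\mathcal{W}}_{(A,T)})$ under $z\mapsto\delta(A,T)(2z-\mathbf{1})+p(A,T)$, whose linear part has operator norm $2\delta(A,T)=2\diam(AZ_T)$; composing the $M_Z$ parametrising maps with this affine map multiplies their Lipschitz constants by at most $2c_\Da\diam(AZ_T)$. Hence $\partial(AZ_T)\in\mathrm{Lip}(\Da,M_Z,\ka_Z\diam(AZ_T))$ with $\ka_Z:=2c_\Da$, which is exactly $Z_T\in\mathcal{F}_{\ka_Z,M_Z}$, uniformly in $T$ (indeed $\ka_Z$ can be taken to depend only on $\Da$). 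The one genuinely non-elementary ingredient is the uniform parametrisation theorem, in particular the fact that the Lipschitz constant can be chosen independently of the parameter $(A,T)$; the passage to the family $\mathcal{W}$, the definability of the fibrewise boundary, the normalisation, and the affine bookkeeping are routine, modulo a little care with the finitely many degenerate fibres at the start.
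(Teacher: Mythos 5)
Your proposal is correct and follows essentially the same route as the paper: fold the matrices $A\in\Aut_\Da(\IR)$ into the parameter space to form one definable family, normalise the fibres into the unit cube using definable choice and the definable diameter function, take the (definable, dimension $\le \Da-1$) fibrewise boundaries, apply Pila--Wilkie's reparametrisation for definable families to get uniform $M_Z$ and Lipschitz constants, and undo the normalisation to recover the factor $\diam(AZ_T)$. The only difference is organisational: the paper first proves this as a lemma for the family $Z_T$ alone and then applies it to the enlarged family indexed by $(\varphi,T)$, whereas you incorporate $A$ from the outset.
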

Suppose the set $Z$ is defined by the inequalities
\begin{alignat}1\label{Zfunc}
f_1(T_1,\ldots,T_d,x_1,\ldots,x_\Da)\leq 0,\ldots, f_k(T_1,\ldots,T_d,x_1,\ldots,x_\Da)\leq 0,
\end{alignat}
where the $f_i$ are certain real valued functions on $\IR^{\Da+d}$. If all these functions $f_i$ are definable in 
a common o-minmal structure then we can apply Proposition \ref{Propomin}. This happens for instance
if the $f_i$ are restricted analytic functions\footnote{By a restricted analytic function we mean a real valued function on $\IR^n$, which is zero outside of $[-1,1]^n$, and is the restriction to $[-1,1]^n$ 
of a function, which is real analytic on an open neighborhood of $[-1, 1]^n$.} or polynomials in $z_1,\ldots,z_{d+\Da}$ and each $z_i\in \{T_m,\exp(T_m),x_l,\exp(x_l); 1\leq m\leq d, 1\leq l\leq \Da\}$. For more details and examples
we refer to \cite{Wilkie2007, Scanlon2011, Scanlon2016}.

For the proof of Proposition \ref{Propomin} we shall need the following lemma. 
We are grateful to Fabrizio Barroero for alerting us  to Pila and Wilkies Reparametrization Lemma for definable families
and its relevance for the lemma.
\begin{lemma}\label{ominlemma}
Suppose $Z\subset \IR^{d+\Da}$  is definable in an o-minimal structure over $\IR$,
and assume further that all fibers $Z_T$ are bounded sets. 
Then there exist constants $\ka_Z$ and $M_Z$ depending only on $Z$  such that the boundary
$\partial Z_T$ lies in Lip$(\Da,M_Z,\ka_Z\cdot \diam(Z_T))$ for every $T\in \IR^d$.
\end{lemma}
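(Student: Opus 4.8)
\textbf{Proof proposal for Lemma \ref{ominlemma}.}
The plan is to reduce the Lipschitz covering of the boundary to an application of the Pila--Wilkie Reparametrization Lemma applied to the definable family $\{\partial Z_T\}_{T\in\IR^d}$, and then to absorb the diameter scaling by a normalisation trick. First I would observe that the map $(T,x)\mapsto x$ restricted to $Z$ is definable, so $W:=\bigcup_{T}(\{T\}\times\partial Z_T)$ is definable (the boundary operation is uniformly definable in an o-minimal structure), and each fiber $W_T=\partial Z_T$ is bounded because $Z_T$ is. The key point is that the boundaries form a single definable family over the parameter space $\IR^d$, so all the relevant bounds will come out uniform in $T$.

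Next I would handle the scaling. Since $Z_T$ is bounded, $\diam(Z_T)=:\delta_T$ is finite, and I may assume $\delta_T>0$ (if $Z_T$ has empty interior or is a point the statement is trivial or follows by an easy degenerate case, e.g.\ with constant maps). Pick a point $p_T\in Z_T$ definably (say the lexicographically least point of the closure, which is a definable choice), and consider the rescaled family $\widetilde Z:=\{(T,(x-p_T)/\delta_T); (T,x)\in Z\}$. This is still definable, its fibers $\widetilde Z_T$ all have diameter $\le 1$ and lie in a fixed ball $B_{\vNull}(1)$, independent of $T$. Now apply the Reparametrization Lemma (for definable families, as in Pila--Wilkie) to the definable family $\{\partial\widetilde Z_T\}$: since $\partial\widetilde Z_T\subset\IR^\Da$ is a definable set of dimension $\le \Da-1$ contained in the unit ball, there are a number $M_Z$ and a uniform constant, both depending only on $\widetilde Z$ (hence only on $Z$), and for each $T$ finitely many maps $\psi_{T,1},\dots,\psi_{T,M_Z}:[0,1]^{\Da-1}\to\partial\widetilde Z_T$, of class $C^1$ with all first partials bounded by $1$, whose images cover $\partial\widetilde Z_T$. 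A $C^1$ map on $[0,1]^{\Da-1}$ with partials bounded by $1$ is Lipschitz with constant $\le\Da$ (or $\le\sqrt{\Da}$, up to the exact norm convention), so $\partial\widetilde Z_T\in\mathrm{Lip}(\Da,M_Z,\Da)$ for every $T$.

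Finally I would undo the rescaling: composing each $\psi_{T,i}$ with the affine map $y\mapsto \delta_T y+p_T$ multiplies the Lipschitz constant by $\delta_T=\diam(Z_T)$ and carries the image onto $\partial Z_T$, giving $\partial Z_T\in\mathrm{Lip}(\Da,M_Z,\ka_Z\cdot\diam(Z_T))$ with $\ka_Z:=\Da$ (or $\sqrt\Da$), which depends only on $\Da$ and hence only on $Z$. The main obstacle I anticipate is the invocation of the Reparametrization Lemma in \emph{family} form with all constants uniform in the parameter $T$ — one must cite the version of Pila--Wilkie that produces, for a definable family, a \emph{single} bound $M_Z$ on the number of charts and a single bound on the derivatives valid for every fiber, rather than the pointwise-in-$T$ statement. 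A secondary technical point is justifying that $\partial Z_T$ and the selection of $p_T,\delta_T$ vary definably in $T$, and dealing cleanly with the degenerate fibers (empty, singletons, lower-dimensional) where $\diam(Z_T)$ may vanish or the boundary may be all of $Z_T$; these are routine once the definability of the family is in place.
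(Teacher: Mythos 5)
Your proposal is correct and follows essentially the same route as the paper: a definable base-point selection (the paper uses definable Skolem functions where you use the lexicographically least point of the closure), rescaling by $\diam(Z_T)$ to place all fibers in a fixed bounded cube, the Pila--Wilkie Reparametrization Lemma in its uniform form for definable families applied to the definable boundary family, and then undoing the affine normalisation to recover the factor $\diam(Z_T)$ in the Lipschitz constant. The technical points you flag (definability of $T\mapsto\diam(Z_T)$ and of the boundary family, degenerate fibers, extending the charts to $[0,1]^{\Da-1}$) are exactly the ones the paper handles explicitly, so there is no gap in substance.
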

\begin{proof}
First note that if $\#Z_T\leq 1$ then $\partial Z_T$ lies in Lip$(\Da,1,0)$. Hence, it suffices to prove the claim for those $T$
with $\#Z_T\geq 2$. By replacing $Z$ with the definable set $\{(T,x)\in Z; (\exists x,y\in Z_T)(x\neq y)\}$ we can assume that 
$\#Z_T\geq 2$ for all $T\in \pi(Z)$, where $\pi$ is the projection onto the first $d$ coordinates.
We use the existence of definable Skolem functions. By \cite[Ch.6, (1.2) Proposition]{vandenDries1998} there exists a definable map
$f:\pi(Z)\rightarrow \IR^d$ whose graph $\Gamma(f)\subset Z$.
The proof of said  (1.2) Proposition actually shows that there is an algorithmic way to construct the Skolem function f. We will use the fact that this choice of f is determined by 
$Z$ and $\pi$ and hence can be seen as part of the data of $Z$. 

Now we consider the set $Z'=\{(T,y); (T,x)\in Z, y=x-f(T)\}$. This set is again definable and each non-empty fiber contains the origin, i.e.,
$0\in Z'_T$ for all $T\in \pi(Z)$. Next we scale the fibers and translate by the point $y_0=(-1/2)(1,\ldots,1)\in \IR^\Da$ to get a new definable 
set whose fibers all lie in $(0,1)^\Da$. We put $Z''=\{(T,z); (T,y)\in Z', z=(3\cdot\diam(Z'_T))^{-1}y-y_0\}$
(recall that $\diam(Z'_T)=\diam(Z_T)>0$ since $Z_T$ has at least two points).
We note that the graph of the function $T\rightarrow \diam(Z_T)$ from $\pi(Z)$ to $\IR$ is given by
$$\{(T,t)\in \pi(Z)\times\IR; \phi(T,t)\land \lnot((\exists u\in \IR)(\phi(T,u) \land u<t)\},$$
where $\phi(T,t)$ stands for $(\forall x,y \in Z_T)(|x-y| \leq t)$.
This shows that the aforementioned map is definable and hence, so is $Z''$. Also we have $Z''_T\subset (0,1)^\Da$ for all $T$.
By \cite[Lemma 3.15]{BarroeroWidmer} the set $Z'''=\{(T,w); w\in \partial Z''_T\}$  is also definable. The fibers of a definable set are again definable (cf. \cite[Lemma 3.1]{BarroeroWidmer}),
and hence by \cite[Ch.4, (1.10) Corollary]{vandenDries1998} we have $\dim(\partial Z''_T)\leq\Da-1$.
From Pila and Wilkie's Reparameterization Lemma for definable families \cite[5.2. Corollary]{PilaWilkie}
we conclude\footnote{Using that the partial derivatives are uniformly bounded we can extend the domain of the parametrisation to $[0,1]^{\Da-1}$ without altering the Lipschitz constant.} that $\partial Z''_T$ lies in Lip$(\Da,M_{Z'''},\ka_{Z'''})$ for all $T\in \IR^d$ with certain constants $\ka_{Z'''}$ and $M_{Z'''}$.
Rescaling and retranslating gives $\partial Z_T\in$ Lip$(\Da,M_{Z'''},\ka_{Z'''}\cdot \diam(Z_T))$. Finally, we note that $Z'''$ depends only 
on $Z$ and $f$ which itself can be seen as part of the data of $Z$, so that the constants $\ka_{Z'''}$ and $M_{Z'''}$ may be chosen 
to depend only on $Z$. This completes the proof of the lemma.
\end{proof}

We can now prove Proposition \ref{Propomin}. Consider the set 
$$Z'''':=\{(\varphi,T,x); \varphi \in \Aut_\Da(\IR), x\in \varphi(Z_T)\}.$$
This set is definable in the given o-minimal structure, and we have $Z''''_{(\varphi,T)}=\varphi(Z_T)$.
Applying Lemma \ref{ominlemma} to the fibers $Z''''_{(\varphi,T)}$ we conclude that there exist constants $\ka_{Z''''}$ and $M_{Z''''}$ such that
 $\partial \varphi(Z_T)$ lies in Lip$(\Da,M_{Z''''},\ka_{Z''''}\cdot \diam(\varphi (Z_T)))$ for all $(\varphi,T)\in \Aut_\Da(\IR)\times\IR^d$.
Note that $Z''''$ depends only on $Z$ so that $M_{Z''''},\ka_{Z''''}$ are depending only on $Z$, and this completes
the proof of Proposition \ref{Propomin}.

\section*{Acknowledgements}
It is my pleasure to thank Fabrizio Barroero, Michel Laurent, Arnaldo Nogueira, Damien Roy, and Maxim Skriganov for helpful discussions.
I completed this article during a visiting professorship at Graz University of Technology, and I  thank the Institute of Analysis and Number Theory for its hospitality.

\bibliographystyle{amsplain}
\bibliography{literature}

\end{document}